\newcommand{\Br}{\text{Br}}
\newcommand{\F}{\mathbb{F}}
\newcommand{\Gal}{\text{\rm Gal}}
\newcommand{\N}{\mathbb{N}}
\newcommand{\Z}{\mathbb{Z}}
\newcommand{\Q}{\mathbb{Q}}
\newcommand{\im}{\text{\rm{im}}}
\newcommand{\codim}{\text{\rm codim}}
\newcommand{\rk}{\text{\rm rk}}
\newcommand{\rkf}{\text{\rm f-rk}}
\newcommand{\rknf}{\text{\rm nf-rk}}
\newcommand{\ch}[1]{\textrm{char(}#1\textrm{)}}
\newcommand{\comment}[1]{}
\begin{document}

\title[Parameterizing Embedding Problems]{Parameterizing solutions to any Galois embedding problem over $\mathbb{Z}/p^n\mathbb{Z}$ with elementary $p$-abelian kernel}

\author[Andrew Schultz]{Andrew Schultz}
\address{106 Central Street, Wellesley College, Wellesley, MA 02482}
\email{andrew.c.schultz@gmail.com}

\begin{abstract}
In this paper we use the Galois module structure for the classical parameterizing spaces for elementary $p$-abelian extensions of a field $K$ to give necessary and sufficient conditions for the solvability of any embedding problem which is an extension of $\mathbb{Z}/p^n\mathbb{Z}$ with elementary $p$-abelian kernel.  This allows us to count the total number of solutions to a given embedding problem when the appropriate modules are finite, and leads to some nontrivial automatic realization and realization multiplicity results for Galois groups.
\end{abstract}


\date{\today}

\maketitle

\newtheorem*{theorem*}{Theorem}
\newtheorem*{lemma*}{Lemma}
\newtheorem{proposition}{Proposition}[section]
\newtheorem{theorem}[proposition]{Theorem}
\newtheorem{corollary}[proposition]{Corollary}
\newtheorem{lemma}[proposition]{Lemma}

\theoremstyle{definition}
\newtheorem{definition}[proposition]{Definition}
\newtheorem*{definition*}{Definition}
\newtheorem*{remark*}{Remark}
\newtheorem{example}[proposition]{Example}

\parskip=10pt plus 2pt minus 2pt

\section{Introduction}\label{sec:introduction}

One of the fundamental problems in Galois theory is to determine conditions on a field $F$ which are necessary and sufficient for the appearance of a group $G$ as a Galois group over $F$; i.e., to determine when there exists an extension $K/F$ with $\Gal(K/F) \simeq G$.  The relative version of this question is the so-called embedding problem. For a given surjection of groups 
$\xymatrix{\hat G \ar@{->>}[r]^{\varphi} & G}$
and a given isomorphism $\psi_K:\Gal(K/F) \to G$, the embedding problem for $(\hat G,\varphi,\psi_K)$ over $K/F$ asks whether there is a field extension $L/F$ containing $K$ and an isomorphism $\psi_L:\Gal(L/F) \to \hat G$ such that the natural surjection from Galois theory makes the following diagram commute:
$$\xymatrix{\Gal(L/F)  \ar[d]_{\psi_L} \ar@{->>}[r] & \Gal(K/F) \ar[d]_{\psi_K}\\ \hat G \ar@{->>}[r]^{\varphi} & G}.$$  One can ask for a weaker solution to this embedding problem by only insisting that $\psi_L$ be an injection; in this case, $L$ is said to be a weak solution to the embedding problem.

There are a number of results in the literature which explore embedding problems for $p$-groups, particularly embedding problems whose kernel is $\Z/p\Z$: 
\begin{equation}\label{eq:embedding.problem.with.kernel.Fp}
\xymatrix{1 \ar[r] & \Z/p\Z \ar[r] & \hat G \ar[r]^\varphi & G \ar[r] & 1}.
\end{equation} These trace back to Dedekind's work on the embedding problem $\xymatrix{Q_8 \ar@{->>}[r]& \Z/2 \oplus \Z/2}$ in \cite{D}.  The interested reader can also find a bounty of results concerning the realizability of small $2$-groups as Galois groups (often by studying embedding problems) in articles such as \cite{DM,DP,GS1,GS2,GS3,GSS,M2,M4,M5}, as well as a number of papers on the realizability of small $p$-groups as Galois groups (again, often via embedding problems) in \cite{MN,M1,M3,Sw}.  

Away from characteristic $p$, the conventional method for approaching these problems is to assume $K$ contains the appropriate roots of unity and then consider the element $c \in H^2(G,\Z/p\Z)$ that corresponds to this extension of groups, with $\Z/p\Z$ identified with the trivial $G$-module $\mu_p$ of $p$th roots of unity in $K$.  The existence of an extension $L/F$ which solves the given embedding problem is then translated in terms of the image of this class $c$ within $\Br(K)$ under the map $H^2(G,\Z/p\Z) \to H^2(G,K^\times)$ which is induced by $\mu_p \hookrightarrow K^\times$; often this involves determining a specific algebra that represents this element within $\Br(K)$, and typically this is quite difficult.  When one doesn't have the necessary roots of unity, one approach is to solve the corresponding question in the extension of fields given by adjoining the necessary roots of unity, and then attempt to descend.  In characteristic $p$, one hopes to use the power of Witt's famous result from \cite{Wi} concerning the realizability of $p$-group as Galois groups in characteristic $p$; for instance, in \cite[App.~A]{JLY}, Jensen, Ledet and Yau use a technique similar to Witt's to show the embedding problem (\ref{eq:embedding.problem.with.kernel.Fp}) is solvable in characteristic $p$ provided it is central (i.e., $\ker(\varphi) \subseteq Z(\hat G)$) and nonsplit.

In this paper, we will show that there is a universal parameterizing module for solutions to embedding problems $\xymatrix{\hat G \ar@{->>}[r] & \Z/p^n\Z}$ whose kernel is an $\F_p[\Z/p^n\Z]$-module (i.e., an elementary $p$-abelian group with a $G$-action).  Though it has the same spirit as many of the embedding problems in the literature, we will develop our results without explicitly delving into $2$-cohomology.  Our parametrization involves studying the $\Gal(K/F)$-module structure of the parameterizing $\F_p$-space for elementary $p$-abelian extensions over $K$, which we denote $J(K)$; for instance, when $K$ contains a primitive $p$th root of unity, we will study the $\F_p[\Gal(K/F)]$-structure of $J(K) = K^\times/K^{\times p}$.  This study was initiated by Waterhouse in \cite{Wa}, and sections \ref{sec:group.classification} and \ref{sec:embedding.problems.in.terms.of.classical.parameterizing.spaces} from this paper can be thought of as a completion of the ideas that Waterhouse presents there.  

The question of studying embedding problems with elementary $p$-abelian kernel was also recently considered by Min\'{a}\v{c} and Swallow in \cite{MS2}.  In that paper, the authors consider embedding problems where the corresponding factor group $\Gal(K/F)$ is $\Z/p\Z$ and the kernel is a cyclic $\F_p[\Gal(K/F)]$-module.  Our paper generalizes these results by allowing $\Gal(K/F) \simeq \Z/p^n\Z$ for any $n \in \N$, and removes the condition of cyclicity (as a module) for the kernel.   Shirbisheh also considers non-cyclic kernels in \cite{Sh}, where he studies embedding problems over the field $\Q(\xi_{p^2})/\Q(\xi_p)$.  Aside from the fact that we have no restriction on the fields we consider, our approach differs in that we give explicit descriptions for all possible extensions of $\Z/p^n\Z$ by a finite $\F_p[G]$-module $A$, and then we find a parameterizing set for each such group within $J(K)$.  We are also more constructive in our approach to finding modules within $J(K)$ that solve a given embedding problem, giving a recipe for how one might build such a module first in terms of a fixed submodule and then through generators ``over" this fixed subspace.

To accomplish our goal, we will first classify all solutions to the ``group-theoretic embedding problem" $$\xymatrix{\hat G \ar@{->>}[r]^-{\varphi} & \Z/p^n\Z}$$ where the kernel $M$ is an elementary $p$-abelian group on which $\Z/p^n\Z$ acts.  We will show in Theorem \ref{th:counting.isom.types.of.group.embedding.problems} that any such $\hat G$ is determined by two pieces of data: the $\F_p[G]$-structure of $M$ and an integer $1 \leq \mu \leq p^n$ which can be thought of as a measure of how close the exact sequence $$\xymatrix{1 \ar[r] & M \ar[r] & \hat G \ar[r] & \Z/p^n\Z \ar[r] &1}$$ is to splitting.  For example, the group $M \rtimes \Z/p^\Z$ corresponds to $\mu = p^n$.  In general, we will adopt the notation $M \bullet_\mu \Z/p^n\Z$ to denote the group determined by a given $M$ and $\mu$.  (Section \ref{sec:group.classification} describes this taxonomy in greater detail.)

We now state the main result of the paper, which shows that $J(K)$ acts as the universal parameterizing space for embedding problems expressible as an extension of $\Z/p^n\Z$ by an elementary $p$-abelian group.  The statement of the result uses two maps, $N: J(K) \to J(F)$ and $\iota: J(F) \to J(K)$; the first is induced by either an appropriate norm or trace map (depending on the presence of roots of unity), and the second is induced by the inclusion of fields.  We will have more to say about these maps in section \ref{sec:embedding.problems.in.terms.of.classical.parameterizing.spaces}.

\begin{theorem}\label{th:main.theorem}
Let $\Gal(K/F) \simeq \Z/p^n\Z$, and let $M \bullet_\mu \Z/p^n\Z$ be a given extension of $\Z/p^n\Z$ by $M$.  Then the extensions $L/F$ which solve the embedding problem $\xymatrix{M \bullet_\mu \Z/p^n\Z \ar@{->>}[r] & \Z/p^n\Z}$ over $K/F$ are parameterized by $\F_p[\Gal(K/F)]$-submodules  $U \subseteq J(K)$ which satisfy $M \simeq U$ and  $$\inf_{u \in U}\left\{\dim_{\F_p} \langle u\rangle_{\F_p[G]}: u \in \ker(\iota \circ N) \setminus \ker(N)\right\} = \mu.$$
\end{theorem}

(Of course, it could be the case that $U \cap \left(\ker(\iota \circ N)\setminus  \ker(N)\right) = \emptyset$; since cyclic $\F_p[G]$-modules have dimension between $0$ and $p^n$, the appropriate interpretation for $\inf(\emptyset)$ in this case is $\inf(\emptyset) = p^n$.  Since we've already mentioned that $M \bullet_{p^n} G$ corresponds to $M \rtimes G$, this statement simply means that solutions to $M \rtimes G$ correspond to those modules $U$ with $M \simeq U$ and $U \cap \ker(\iota \circ N) = U \cap \ker(N)$.)

When the module structure of $J(K)$ for a given field $K$ is known, this result tells us we can use linear algebra to answer all questions about embedding problems over $K/F$ whose kernel is an elementary $p$-abelian group: precisely which embedding problems have solutions, together with an explicit count on those extensions.  Fortunately, the module structure of $J(K)$ has already been calculated, and can in fact be expressed in terms of some basic arithmetic invariants of $K/F$ (we will review this work in section \ref{sec:general.applications}).  Hence we can use the above result to deduce some very concrete realization and enumeration results.  We finish this section by detailing a few of these.

To state our results we will also need to establish some notation for modules over $\F_p[\Z/p^n\Z]$; this will be reviewed more fully in section \ref{sec:notation}.  When $\Z/p^n\Z = \langle \sigma \rangle$, we will write $\Psi = \sigma-1$.  For an $\F_p[\Z/p^n\Z]$-module $M$, the subspaces $M_{\{\ell\}} := \im\left(\xymatrix{M \ar[r]^{\Psi^{\ell-1}}& M}\right) \cap M^G$ provide a filtration that we call the length filtration on $M$, denoted $F^{\mbox{\tiny{len}}}_M$: $$F^{\mbox{\tiny{len}}}_M: \quad M^G =M_{\{1\}} \supseteq M_{\{2\}} \supseteq \cdots \supseteq M_{\{p^n\}} \supseteq M_{\{p^n+1\}} = \{0\}.$$  This filtration captures the module structure of $M$ itself; writing $d_\ell = \codim\left(M_{\{\ell+1\}},M_{\{\ell\}}\right)$,  one can show \begin{equation}\label{eq:decomposition.for.general.module}
M \simeq \bigoplus_{\ell=1}^{p^n} \oplus_{d_\ell} \F_p[\Z/p^n\Z]/\langle \Psi^{\ell}\rangle.
\end{equation}

Now we recall a definition from \cite{MSS1}.  For a field extension $K/F$ with $\Gal(K/F) \simeq \Z/p^n\Z$, let $K_i$ denote the intermediate field of degree $p^i$ over $F$.  If the embedding problem
$$\xymatrix{\Z/p^{n+1}\Z \ar@{->>}[r] & \Z/p^n\Z }$$ for $K/F$ has a solution, then define $i(K/F) = -\infty$.  Otherwise, let $s$ be the minimum value such that the embedding problem
$$\xymatrix{\Z/p^{n-s+1}\Z \ar@{->>}[r] & \Z/p^{n-s}\Z}$$ for $K/K_s$ has a solution, and define $i(K/F) = s-1$.  Notice that we have $i(K/F) \in \{-\infty,0,\cdots,n-1\}$ provided $K \neq F$.  

It was shown in \cite{MSS1} that $i(K/F)$ is one of the defining characteristics of the $\F_p[\Gal(K/F)]$-module structure of the parameterizing space of elementary abelian extensions of $K$. In the Kummer case, the other necessary data to determine the $\F_p[\Gal(K/F)]$-structure of this space is the size of the various norm subgroups within $F$ (we will review this --- together with the necessary adjustments outside the Kummer case --- in section \ref{sec:general.applications}).  For our results below, we encode this data in a seemingly peculiar way, though one that is well-suited for making statements about embedding problems.  For $1 \leq \ell \leq p^n$, define $\varepsilon(\ell)$ by $p^{\varepsilon(\ell)-1} <i \leq p^{\varepsilon(\ell)}$, and if $\xi_p \in K$ let
$$\mathfrak{D}_{\ell} :=\dim_{\F_p}\left(\frac{\left(N_{K_{\varepsilon(\ell)}/F}(K^{\times}_{\varepsilon(\ell)})\right)K^{\times p}}{K^{\times p}}\right).$$

Our first theorem gives specific conditions on the solvability of the embedding problem $\xymatrix{M \rtimes \Z/p^n\Z \ar@{->>}[r] & \Z/p^n\Z}$ over $K/F$.  For convenience we state this result only in the case $\xi_p \in K$, though we'll see later in section \ref{sec:counting} that a few notational changes will allow it to apply to all fields.  The result uses the notation $\binom{n}{m}_p$ for the $p$-binomial coefficient which we define in section \ref{sec:counting}.  

\begin{theorem}\label{th:counting.to.split.embedding.problem}
Suppose that $\Gal(K/F) \simeq \Z/p^n\Z$ and $\xi_p \in K$.  Suppose further that $M$ is a finite $\F_p[\Gal(K/F)]$-module.    Then the embedding problem $\xymatrix{M \rtimes G \ar@{->>}[r] &G}$ has a solution over $K/F$ if and only if $\dim\left(M_{\{\ell\}}\right) \leq \mathfrak{D}_{\ell}$ for all $1 \leq \ell \leq p^n$. 

If $(F^\times K^{\times p})/K^{\times p}$ is infinite and the embedding problem $\xymatrix{M \rtimes G \ar@{->>}[r] & G}$ is solvable, then there are infinitely many solutions to this embedding problem over $K/F$.  If $(F^\times K^{\times p})/K^{\times p}$ is finite and the embedding problem $\xymatrix{M \rtimes G \ar@{->>}[r] & G}$ is solvable, then the number of solutions to this embedding problem over $K/F$ is
$$p^{\dim\left(M_{\{p^n\}}\right)}~\prod_{\ell=1}^{p^n}\binom{\mathfrak{D}_{\ell} - \dim(M_{\{\ell+1\}})}{\codim(M_{\{\ell+1\}},M_{\{\ell\}})}_p~\left(p^{\sum_{j<\ell} \mathfrak{D}_{j} - \dim(M_{\{j\}})}\right)^{\codim\left(M_{\{\ell+1\}},M_{\{\ell\}}\right)}$$ unless $p=2,n=1$ and $i(K/F)=0$.  In this latter case, the number of solutions is instead
$$\prod_{\ell=1}^{p^n}\binom{\mathfrak{D}_{\ell} - \dim(M_{\{\ell+1\}})}{\codim(M_{\{\ell+1\}},M_{\{\ell\}})}_p~\left(p^{\sum_{j<\ell} \mathfrak{D}_{j} - \dim(M_{\{j\}})}\right)^{\codim\left(M_{\{\ell+1\}},M_{\{\ell\}}\right)}. $$
\end{theorem}



Our parameterization also allows us to make a number of statements about how the appearance of one group as a Galois group over $F$ influences the existence of other groups as Galois groups over $F$.  To preview some results of this flavor, we introduce the following definition. For a given (finite) group $G$ and field $E$, we say that an extension $L/E$ is a $G$-extension of $E$ if $\Gal(L/E) \simeq G$; we will write $\mathfrak{F}(G)$ for the set of all fields $E$ which admit a $G$-extension.  

If $E \in \mathfrak{F}(G)$ implies $E \in \mathfrak{F}(Q)$, then we say that $G$ automatically realizes $Q$; the automatic realization result is said to be trivial when $Q$ is a quotient of $G$, since in this case $E \in \mathfrak{F}(Q)$ by elementary Galois theory. Classic examples of nontrivial automatic realizations were given by Whaples in \cite{Wh}, where he showed that $\Z/p\Z$ automatically realizes $\Z/p^n\Z$ when $p$ is an odd prime and $n\geq 2$, as well as showing $\Z/4\Z$ automatically realizes $\Z/2^n\Z$ for all $n \geq 3$.   Jensen has written a number of excellent articles on automatic realizations, including \cite{J1,J2,J3}, and there are other automatic realizations considered in \cite{Br,GS1,L,M1,Wh}.  It is worth noting that prior to \cite{MSSauto}, the known automatic realization results for nonabelian $p$-groups with $p>2$ were extremely limited, and seem to not involve groups of order larger than $p^4$; the module-theoretic machinery used in \cite{MSSauto} and this paper, on the other hand, provide several infinite classes of automatic realizations for non-abelian $p$-groups, and even many cases where one (relatively) small $p$-group automatically realizes a (relatively) larger $p$-group.

We now give an automatic realization that generalizes those found in \cite{MSSauto}; this result is a special case of the results we develop in section \ref{sec:general.applications}, and in particular we will have similar results for nonsplit groups as well.  To state the result, recall that for $d_\ell = \codim\left(M_{\{\ell+1\}},M_{\{\ell\}}\right)$ we have $M \simeq \bigoplus_{\ell=1}^{p^n} \oplus_{d_\ell} \F_p[\Z/p^n\Z]/\langle \Psi^{\ell-1}\rangle$.  We define $\lceil M \rceil$ to be the $\F_p[\Z/p^n\Z]$-module
\begin{equation}\label{eq:roundup.decomposition.for.general.module}\lceil M \rceil := \bigoplus_{t=1}^{n} \oplus_{D_t} \F_p[\Z/p^n\Z]/\langle \Psi^{p^{t}} \rangle,\end{equation} where $D_t := \codim\left(M_{\{p^t+1\}},M_{\{p^{t-1}+1\}}\right) = d_{p^{t-1}+1}+\cdots+d_{p^t}$. (We have chosen the notation $\lceil M \rceil$ suggestively, since we think of it as taking each summand of $M$ of length $p^{t-1}+j$ with $1 \leq j \leq p^t-p^{t-1}$ and ``rounding it up" to a module of dimension $p^{t}$.) Note, that, in general, $\lceil M \rceil$ is much larger than $M$, and hence $\lceil M \rceil \rtimes \Z/p^n\Z$ is typically much larger than $M \rtimes \Z/p^n\Z$.  For example, if $d_2=k$ and all other $d_i=0$, then $$\left|\left\lceil M \right\rceil\rtimes \Z/p^n\Z\right|=p^{(p-2)k}\left|M\rtimes \Z/p^n\Z\right|.$$

\begin{theorem}\label{th:roundup.automatic.realization}
If $F \in \mathfrak{F}(M \rtimes \Z/p^n\Z)$, then $F \in \mathfrak{F}(\lceil M \rceil\rtimes \Z/p^n\Z).$
\end{theorem}


$M \rtimes \Z/p^n\Z$ is naturally a quotient of $\lceil M \rceil \rtimes \Z/p^n\Z$ since $\ell \leq p^{\varepsilon(\ell)}$, and so  $\lceil M \rceil \rtimes \Z/p^n\Z$ trivially realizes $A \rtimes \Z/p^n\Z$; this theorem says that the opposite (and highly nontrivial) automatic realization also holds.    


To take advantage of the fact that we have precise counts on the number of solutions to a given embedding problem, we also state some results concerning realization multiplicity.  Let $\nu(G,F)$ denote the number of distinct $G$-extensions of $E$ within a fixed algebraic closure of $E$, and $$\nu(G) = \min_{F \in \mathfrak{F}(G)} \nu(G,E).$$  This latter quantity is called the realization multiplicity of $G$.  Jensen has explored realization multiplicities in \cite{JP1,JP2}.   We have a generalization of the main result from \cite{BS}.

\begin{theorem}\label{th:free.summands.realization.multiplicity}
Suppose that $M$ is an $\F_p[\Z/p^n\Z]$-module satisfying either 
\begin{itemize}
\item $\codim\left(M_{\{\ell+1\}},M_{\{\ell\}}\right) >1$ for some $\ell=p^i+j$ where $2 \leq j < p^{i+1}-p^i$; or 
\item for $p>2$, $\sum_{i=0}^{n-1} \codim\left(M_{\{p^i+2\}},M_{\{p^i+1\}}\right) >1$; or
\item for $p=2$, $\sum_{i=1}^{n-1} \codim\left(M_{\{p^i+2\}},M_{\{p^i+1\}}\right) >1$.
\end{itemize}  Then for any $\hat G$ that is any extension of $\Z/p^n\Z$ by $M$, we have $\nu(\hat G) \geq p^{\dim\left(M_{\{p^n\}}\right)}$.
\end{theorem}

These automatic and realization multiplicity results provide some \emph{a priori} unexpected restrictions on the structure of absolute Galois groups, and give some tangible characteristics that distinguish them from the larger class of profinite groups.

This paper proceeds as follows.  In the next section we remind the reader of some results about $\F_p[G]$-modules, and we classify all extensions of $\Z/p^n\Z$ by $\F_p[\Z/p^n\Z]$-modules in section \ref{sec:group.classification}.  In section \ref{sec:embedding.problems.in.terms.of.classical.parameterizing.spaces} we consider the parameterizing space of elementary $p$-abelian extensions over $K$ --- denoted $J(K)$ --- and some of the known bijections between cyclic submodules of $J(K)$ and extensions of $\Z/p^n\Z$; we extend these results to include the case of characteristic $p$, and we then describe the collection of submodules in $J(K)$ that correspond to fields that  are solutions to embedding problems $\xymatrix{\hat G \ar@{->>}[r] & \Z/p^n\Z}$ over $K/F$. This allows us to give a precise count for the number of such solutions, which we do in section \ref{sec:counting}.  In section \ref{sec:general.applications} we recall some of the known results about the module structure of $J(K)$ when $\ch{F} \neq p$, and we extend these results to include $\ch{F} = p$ as well.  We then use these to make statements about realization multiplicities and automatic realizations.

\section{Notation and $\F_p[G]$-decompositions}\label{sec:notation}

Throughout the paper $p$ will denote a prime number, and we will use $K/F$ to denote an extension such that $G=\Gal(K/F) = \langle \sigma \rangle \simeq \Z/p^n\Z$ with $n \in \Z^+$.  We use $\mathds{1}_S$ as the indicator function for a subset $S$ of the natural numbers; often we describe $S$ explicitly in terms of equalities or inequalities.  

Our results will concern embedding problems $\xymatrix{\hat G \ar@{->>}[r]^{\varphi}& G}$ over $K/F$, where $\hat G$ is an extension of $G$ by a finite $\F_p[G]$-module $M$.  We will suppress the explicit isomorphism $\psi_K:\langle \sigma \rangle \to \Z/p^n\Z$ when considering these embedding problems.  To emphasize the Galois-theoretic motivation of our work, we will call the extensions of $G$ by $M$ ``group-theoretic embedding problems."  We say that two group-theoretic embedding problems $(\hat G_1,\varphi_1)$ and $(\hat G_2,\varphi_2)$ are isomorphic if there exists an isomorphism of groups $\psi:\hat G_1 \to \hat G_2$ that makes the following diagram commute:
$$\xymatrix{
\hat G_1 \ar@{->>}[r]^{\varphi_1} \ar[d]_{\psi} & G \ar@{=}[d]\\
\hat G_2 \ar@{->>}[r]^{\varphi_2}  & G
.}$$  If we wish to assemble all embedding problems over $G$ into a reasonable category, the morphisms of interest will be surjections: we'll be searching for solutions that come from Galois theory, and the only interesting morphisms of fields are injections.

When we work with an $\F_p[G]$-module, we will assume that the underlying vector space structure is written additively, and hence the $G$-action will be written additively.  

We now collect certain key facts about finite $\F_p[G]$-modules.  A more detailed exposition can be found in \cite[Sec.~1.1]{MSS1} or \cite[Sec.~2.3]{LMSSembed}.  Of central importance to the ring $\F_p[G]$ is the element $\sigma-1$; we will use this element enough that it is convenient to use the abbreviation $$\Psi := \sigma-1.$$  The ideals in $\F_p[G]$ are simply $\{\langle\Psi^\ell\rangle: 1 \leq \ell \leq p^n-1\}$, and hence any cyclic submodule with $\F_p$-dimension $\ell$ is isomorphic to $\F_p[G]/\langle \Psi^\ell\rangle$.  One can show that these are the only indecomposable $\F_p[G]$-modules, and moreover that for any $\F_p[G]$-submodule $M$ there are non-negative integers $\{d_i\}_{i=1}^{p^n}$ so that 
\begin{equation}\label{eq:general.module.decomposition}
M \simeq \bigoplus_{\ell=1}^{p^n} \oplus_{d_\ell} \F_p[G]/\langle \Psi^\ell \rangle.
\end{equation}  This decomposition (and hence the collection $\{d_\ell\}_{\ell=1}^{p^n}$) is unique up to permutation of the summands.  For a given element $m\in M$, we will call $\dim_{\F_p}\langle m \rangle$ the length of $m$, which we write as $\ell(m)$.

It will occasionally be helpful to know the number of various generators of an $\F_p[G]$-module $M$.  Following the notation from the decomposition (\ref{eq:general.module.decomposition}), we write
\begin{equation*}\begin{split}
\rk(M) = \sum_{\ell=1}^{p^n}d_{\ell},\quad \rkf(M) = d_{p^n} \quad \mbox{ and } \quad \rknf(M) = \sum_{\ell=1}^{p^n-1} d_\ell.
\end{split}\end{equation*}  We will call these quantities the rank, free rank and non-free rank, respectively.

The following proposition gives us a way to build an $\F_p[G]$-module from its fixed submodule.  

\begin{proposition}\label{prop:build.a.module}
Suppose that $M$ is a finite $\F_p[G]$-module and consider the filtration of $\F_p$-subspaces $M^{G} = M_{\{1\}} \supseteq M_{\{2\}} \supseteq \cdots \supseteq M_{\{p^n\}} \supseteq M_{\{p^n+1\}} = \{1\}$, where \begin{equation}\label{eq:subbrace.notation}M_{\{\ell\}} = \im\left(\xymatrix{M \ar[r]^-{\Psi^{\ell-1}}& M}\right) \cap M^{G}.\end{equation} Let $\mathcal{I}_{\ell}$ be chosen so that $\cup_{i \geq \ell} \mathcal{I}_i$ is a basis for $M_{\{\ell\}}$, and for each $x \in \mathcal{I}_\ell$ let $\alpha_x \in M$ be given so that $\Psi^{\ell-1}\alpha_x = x$.  Then $M = \bigoplus_{\ell=1}^{p^n} \oplus_{x \in \mathcal{I}_\ell} \langle \alpha_x \rangle$.
\end{proposition}

\begin{proof}
Each of the submodules are independent by \cite[Lm.~2]{MSS1}, and so the stated sum is direct.  The containment ``$\supseteq$" is obvious.  For the opposite containment, we prove that each $m \in M$ is contained in $\tilde M := \oplus_{\ell=1}^{p^n} \oplus_{x \in \mathcal{I}_\ell} \langle \alpha_x \rangle$ by induction on the length of $m$.  If $\ell(m) = 1$ then $m \in \tilde M$ since $M_{\{1\}} = M^G$.  Now suppose $\tilde M$ contains all elements of length at most $\ell-1$, and suppose $\ell(m) = \ell$.  Then $\Psi^{\ell-1}m \in M_{\{\ell\}}$, and hence there exists constants $c_x \in \F_p$ such that
$$\Psi^{\ell-1}m = \mathop{\sum_{i\geq \ell}}_{x \in \mathcal{I}_i} c_x x = \mathop{\sum_{i\geq \ell}}_{x \in \mathcal{I}_i} c_x \Psi^{i-1} \alpha_x =  \Psi^{\ell-1} \mathop{\sum_{i\geq \ell}}_{x \in \mathcal{I}_i} c_x \Psi^{i-\ell }\alpha_x .$$  Hence the element $ \left(\sum c_x \Psi^{i-\ell} \alpha_x\right)- m$ has length less than $\ell$, and is therefore contained in $\tilde M$.  Since each of the $\alpha_x \in \tilde M$ as well, this forces $\alpha \in \tilde M$, as desired.
\end{proof}

\begin{corollary}\label{cor:summand.multiplicities.and.codimension.in.length.filtration}
Suppose $M$ is a finite $\F_p[G]$-module.  Then $M \simeq \oplus_{\ell=1}^{p^n} \oplus_{d_\ell} \F_p[G]/\langle \Psi^\ell\rangle$ if and only if for all $1 \leq \ell \leq p^n$ we have $\codim\left(M_{\{\ell+1\}},M_{\{\ell\}}\right) = d_\ell.$
\end{corollary}

\section{Classifying groups}\label{sec:group.classification}

We are interested in classifying extensions $\xymatrix{\hat G \ar@{->>}[r] & G}$ for which the kernel is elementary $p$-abelian (recall that $G = \Gal(K/F) = \langle \sigma \rangle \simeq \Z/p^n\Z$ and $\Psi:=\sigma-1$).  In order to be slightly more precise, start with the data of the group $G$ (whose operation is written multiplicatively) and a $G$-module $M$ which is an  elementary $p$-abelian as a group (whose operation is written additively).  We say that $\xymatrix{\hat G \ar@{->>}[r] & G}$ is an embedding problem with kernel $M$ if in the short exact sequence
$$\xymatrix{1 \ar[r] &M \ar[r]^\iota &\hat G \ar[r]^\varphi &G \ar[r] & 1}$$ satisfies the condition that the action of $G$ on $M$ by conjugation is compatible with the $G$-action on $M$: for every $\tau \in G$ and $m \in M$, and for any $\hat \tau \in \hat G$ satisfying $\varphi(\hat \tau) = \tau$, we have $$\hat \tau^{-1}\iota(m)\hat \tau = \iota(\tau \cdot m).$$ (To emphasize the $G$-action on $M$ in this section --- where there are several operations at play --- we use the usual ``$\cdot$" notation.  We'll also adopt the standard abuse of notation of dropping the $\iota$ when we consider elements of $M$ as elements of $G$.) The following definition gives one natural way to generate groups $\hat G$ as above.

\begin{definition}\label{def:initial.group.extension}
Let $M = \oplus_{i=1}^{\rk(M)} \langle \alpha_i \rangle$ be a finite $\F_p[G]$-module, and let $\ve[c] \in \F_p^{\rk(M)}$ be given.  We define $\mathfrak{G}(M,\ve[c])$ to be the group generated by $\{\alpha_i\}_{i=1}^{\rk(M)} \cup \{\hat \sigma\}$ and subject to the relations
\begin{enumerate}
\item\label{it:group.ext.one} $\alpha_i + \alpha_j = \alpha_j + \alpha_i,$
\item\label{it:group.ext.two} $\hat \sigma \alpha_i \hat \sigma^{-1} = \sigma \cdot \alpha_i, $
\item\label{it:group.ext.three} $\Psi^{\ell(\alpha_i)}\cdot \alpha_i=0,$ and
\item\label{it:group.ext.four} $\hat \sigma^{p^n} = \sum_{i=1}^{\rk(A)} c_i \Psi^{\ell(\alpha_i)-1} \cdot \alpha_i$.
\end{enumerate} 
In particular, $\mathfrak{G}(M,\ve[c]) = \{m \hat \sigma^j: m \in M, 0\leq j <p^n\}$ as a set, and the group operation is given by
$$(m_1 \hat \sigma^j)(m_2 \hat \sigma^k) = \left\{\begin{array}{ll} 
(m_1 + \sigma^j\cdot m_2)\hat \sigma^{j+k},&\mbox{ if }j+k<p^n\\
(m_1 + \sigma^j\cdot m_2 + \sum_{i=1}^{\rk(M)} c_i \Psi^{\ell(\alpha_i)-1}\cdot \alpha_i)\hat \sigma^{j+k-p^n},&\mbox{ if }j+k\geq p^n.
\end{array}\right.
$$  The group-theoretic embedding problem for $\mathfrak{G}(M,\ve[c])$ over $G$ is then $\xymatrix{\mathfrak{G}(M,\ve[c]) \ar@{->>}[r]^-{\varphi} & G}$, where $\varphi$ is defined by $\varphi(\alpha_i) = 1$ and $\varphi(\hat \sigma) = \sigma$; we will often abuse notation and speak of the embedding problem $\mathfrak{G}(M,\ve[c])$ without referring to either $G$ or $\varphi$.
\end{definition}

\begin{example}\label{ex:hp3.and.mp3}
When $p$ is odd, there are two nonabelian groups of order $p^3$. One has exponent $p$ (which we will denote $H_{p^3}$) and the other has exponent $p^2$ (which we will call $M_{p^3}$).  If we let $G = \langle \sigma \rangle \simeq \Z/p\Z$ and $M = \langle \alpha_1 \rangle \simeq \F_p[G]/\langle\Psi^2\rangle$, then it is not hard to see that $\mathfrak{G}(M,\ve[0])$ and $\mathfrak{G}(M,\ve[e]_1)$ are nonabelian groups of order $p^3$ with $H_{p^3} \simeq \mathfrak{G}(M,\ve[0])$ and $M_{p^3} \simeq \mathfrak{G}(M,\ve[e]_1)$.  Indeed, any nonzero $\ve[v] \in \F_p^1$ has $M_{p^3} \simeq \mathfrak{G}(M,\ve[v])$.
\end{example}

\begin{proposition}\label{prop:every.extension.is.of.general.form}
If $\xymatrix{\hat G \ar@{->>}[r] & G}$ is a group-theoretic embedding problem with kernel given by a finite $\F_p[G]$-module $M$, where $M = \oplus_{i=1}^{\rk(M)} \langle \alpha_i \rangle$, then there exists some $\ve[c] \in \F_p^{\rk(M)}$ such that $\xymatrix{\hat G \ar@{->>}[r] & G}$ is isomorphic to the embedding problem $\mathfrak{G}(M,\ve[c])$.
\end{proposition}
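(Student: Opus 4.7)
The plan is to construct an explicit isomorphism between a given extension $\hat G$ and some $\mathfrak{G}(A,\vec c)$ by first building a candidate $\vec c$ from the data of $\hat G$, then invoking the universal property of the presentation in Definition \ref{def:initial.group.extension}.

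First I would identify $A$ with its image $\iota(A) \subseteq \hat G$, fix a decomposition $A = \oplus_{i=1}^{\rk(A)} \langle \alpha_i \rangle$, and choose any lift $\hat\sigma \in \hat G$ of $\sigma \in G$. Relations (1), (2), (3) of Definition \ref{def:initial.group.extension} then hold tautologically inside $\hat G$: (1) because $A$ is abelian, (2) because the $G$-action on $A$ was assumed to be compatible with conjugation by any lift of $\sigma$, and (3) because it is a defining relation of the $\F_p[G]$-module $A$.

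Next I would produce $\vec c$. Since $\varphi(\hat\sigma^{p^n}) = \sigma^{p^n} = 1$, the element $\hat\sigma^{p^n}$ lies in $A$. Because $\hat\sigma$ commutes with itself, the compatibility of conjugation with the $G$-action shows that $\hat\sigma^{p^n}$ is fixed by $\sigma$, hence lies in $A^G = A_{\{1\}}$. Applying Proposition \ref{prop:build.a.module} to the chosen decomposition, $A^G$ is spanned by $\bigl\{\alpha_i^{(\sigma-1)^{\ell(\alpha_i)-1}}\bigr\}_{i=1}^{\rk(A)}$, so there exist unique $c_i \in \F_p$ with
$$\hat\sigma^{p^n} = \prod_{i=1}^{\rk(A)} \alpha_i^{c_i(\sigma-1)^{\ell(\alpha_i)-1}}.$$
This supplies relation (4) for $\vec c = (c_1,\ldots,c_{\rk(A)})$.

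With all four relations verified in $\hat G$, the universal property of the presentation gives a well-defined homomorphism $\psi: \mathfrak{G}(A,\vec c) \to \hat G$ sending each generator to its namesake. The image contains $A$ and a preimage of $\sigma$, so $\psi$ is surjective, and it visibly fits into the commuting diagram with $\varphi$ and the defining surjection of $\mathfrak{G}(A,\vec c)$. The main obstacle is injectivity, and it is handled by an order count: every element of $\mathfrak{G}(A,\vec c)$ can be written as $\alpha \hat\sigma^{j}$ with $\alpha \in A$ and $0 \leq j < p^n$ using relations (1)--(3), so $|\mathfrak{G}(A,\vec c)| \leq |A|\cdot p^n = |\hat G|$; combined with surjectivity this forces $\psi$ to be an isomorphism. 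This order-bound step is the only nontrivial part of the argument, and it reduces to a routine normal-form manipulation using (2) to push every $\hat\sigma$ past every $\alpha_i$.
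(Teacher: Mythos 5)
Your proof is correct and follows the same route the paper takes, implicitly, in the discussion preceding Definition~\ref{def:initial.group.extension}: choose a lift $\hat\sigma$, observe that $\hat\sigma^{p^n}$ lies in $A^G$, expand it against the $\F_p$-basis $\bigl\{\alpha_i^{(\sigma-1)^{\ell(\alpha_i)-1}}\bigr\}$ of $A^G$ to read off $\ve[c]$, and conclude by the universal property of the presentation together with surjectivity and the order count $|\mathfrak{G}(A,\ve[c])|\leq |A|\cdot p^n=|\hat G|$. One small imprecision: the description of $A^G$ as spanned by those elements is a basic fact about cyclic $\F_p[G]$-modules and direct sums, not really a consequence of Proposition~\ref{prop:build.a.module} (which goes the other way, reconstructing a module from a filtered basis of its fixed part), but this does not affect the argument.
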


\begin{proof}
We know that $\hat G$ is generated by $\F_p[G]$-generators $\{\alpha_i\}_{i=1}^{\rk(M)}$ for $M$ together with a lift $\hat \sigma \in \hat G$ of $\sigma \in G$. Clearly the relations satisfied by $M$ appear in the relations for such an extension of groups, which verifies (\ref{it:group.ext.one}-\ref{it:group.ext.three}).
The last data that determines the structure of such an extension is the value of $\hat \sigma^{p^n}$.  This element must lie in $M$ since it has trivial image in $G$, and it must be fixed by the action of $\sigma$ as well.  Since the fixed submodule of $M$ is generated by $\left\{\Psi^{\ell(\alpha_i)-1}\cdot \alpha_i\right\}_{i=1}^{\rk(M)}$, this means that for some $\ve[c] \in \F_p$, we have
$$\hat \sigma^{p^n} = \sum_{i=1}^{\rk(M)} c_i \Psi^{\ell(\alpha_i)-1}\cdot \alpha_i.$$
\end{proof}




It is worth noting that the relations on $\mathfrak{G}(M,\ve[0])$ are clearly the same as those for $M \rtimes G$, and hence these two groups (and group-theoretic embedding problems) are identical.  

As a first step towards determining when two embedding problems of this form are isomorphic, we have the following

\begin{lemma}\label{le:replace.free.with.zero}
Suppose that $M = \oplus_{i=1}^{\rk(M)} \langle \alpha_i \rangle$, and let $\ve[c] \in \F_p^{\rk(M)}$ be given.  If $\ve[d]$ is the vector such that 
$$d_i = \left\{\begin{array}{ll}0,&\mbox{ if }\ell(\alpha_i) = p^n\\c_i,&\mbox{ otherwise},\end{array}\right.$$ then $\mathfrak{G}(M,\ve[c])$ and $\mathfrak{G}(M,\ve[d])$ are isomorphic as group-theoretic embedding problems.
\end{lemma}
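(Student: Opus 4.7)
The plan is to write down an explicit candidate isomorphism and verify it respects the four presentation relations. Set $\gamma = \prod_{j:\,\ell(\alpha_j) = p^n} \alpha_j^{c_j} \in A$, and propose $\Phi: \mathfrak{G}(A, \vec{c}) \to \mathfrak{G}(A, \vec{d})$ defined on generators by $\Phi(\alpha_i) = \alpha_i$ and $\Phi(\hat\sigma) = \hat\sigma \gamma$. Because $\gamma$ lies in $A = \ker\varphi$, this map automatically commutes with the projections to $G$, so once $\Phi$ is shown to be well-defined it is immediately a morphism of group-theoretic embedding problems; the two-sided inverse is obtained by the same recipe with $\gamma$ replaced by $\gamma^{-1}$.

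The entire check thus reduces to verifying that the images $\alpha_i$ and $\hat\sigma\gamma$ in $\mathfrak{G}(A,\vec{d})$ satisfy the four defining relations of $\mathfrak{G}(A,\vec{c})$. Relations (1) and (3) involve only the $\alpha_i$'s and transfer verbatim. Relation (2) collapses because $\gamma$ lies in the abelian subgroup $A$: $(\hat\sigma\gamma)\alpha_i(\hat\sigma\gamma)^{-1} = \hat\sigma\alpha_i\hat\sigma^{-1} = \alpha_i^\sigma$. The only substantive step is relation (4), which requires computing $(\hat\sigma\gamma)^{p^n}$ inside $\mathfrak{G}(A,\vec{d})$ and matching it against $\prod_i \alpha_i^{c_i(\sigma-1)^{\ell(\alpha_i)-1}}$.

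The key calculation is a norm identity inside $\F_p[G]$. The commutation relation $\hat\sigma a = a^\sigma \hat\sigma$ for $a \in A$ (a consequence of relation (2)) gives by induction $(\hat\sigma\gamma)^k = \gamma^{\sigma + \sigma^2 + \cdots + \sigma^k}\hat\sigma^k$ for all $k \geq 1$. Taking $k = p^n$ and using $\sigma^{p^n}=1$, the exponent becomes the norm $T := 1 + \sigma + \cdots + \sigma^{p^n - 1}$. Since $\F_p[G] \cong \F_p[\sigma]/((\sigma-1)^{p^n})$ (using the characteristic $p$ identity $\sigma^{p^n} - 1 = (\sigma-1)^{p^n}$), the equation $(\sigma-1)T = \sigma^{p^n}-1 = (\sigma-1)^{p^n}$ yields $T = (\sigma-1)^{p^n-1}$.

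Combining these, $(\hat\sigma\gamma)^{p^n} = \gamma^{(\sigma-1)^{p^n-1}}\hat\sigma^{p^n}$. The operator $(\sigma-1)^{p^n-1}$ kills every non-free summand of $A$, so only the free-summand generators in the definition of $\gamma$ contribute, giving $\gamma^{(\sigma-1)^{p^n-1}} = \prod_{j:\,\ell(\alpha_j)=p^n} \alpha_j^{c_j(\sigma-1)^{p^n-1}}$. Relation (4) of $\mathfrak{G}(A,\vec{d})$ supplies $\hat\sigma^{p^n} = \prod_{i:\,\ell(\alpha_i)<p^n} \alpha_i^{c_i(\sigma-1)^{\ell(\alpha_i)-1}}$, since the free-summand exponents have been zeroed out by $\vec{d}$. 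Multiplying the two expressions reassembles exactly $\prod_i \alpha_i^{c_i(\sigma-1)^{\ell(\alpha_i)-1}}$, which is what relation (4) for $\vec{c}$ demands. The main obstacle is precisely this norm identity, which is what allows the twist by $\gamma$ to absorb the free-summand data.
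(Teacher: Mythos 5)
Your proof is correct and follows essentially the same line as the paper's: both hinge on the commutation trick to peel $\gamma$ off $\hat\sigma$, the norm identity $1+\sigma+\cdots+\sigma^{p^n-1}=(\sigma-1)^{p^n-1}$ in $\F_p[G]$, and the observation that $(\sigma-1)^{p^n-1}$ annihilates non-free summands. The only cosmetic difference is orientation --- the paper twists the generator inside $\mathfrak{G}(A,\ve[c])$ and invokes the universal property to get a surjection from $\mathfrak{G}(A,\ve[d])$, while you construct the homomorphism $\mathfrak{G}(A,\ve[c])\to\mathfrak{G}(A,\ve[d])$ directly --- but the computation is identical.
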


\begin{proof}
Consider the lift of $\sigma$ given by $\tilde \sigma = \left(\sum_{\ell(\alpha_i) = p^n} -c_i \cdot \alpha_i\right) \hat \sigma$. Applying relations (1)-(3) inductively, one can show that 
\begin{equation*}
\begin{split}
\tilde \sigma^{p^n}  = \left(\left(\sum_{\ell(\alpha_i) = p^n} -c_i\cdot \alpha_i\right) \hat \sigma\right)^{p^n} 
&=\left( \sum_{\ell(\alpha_i) = p^n} -c_i(1+\sigma+\cdots+\sigma^{p^n-1})\cdot \alpha_i\right) \hat \sigma^{p^n}
\end{split}
\end{equation*}
Since $\sum_{i=0}^{p^n-1} \sigma^i = \Psi^{p^n-1} \mod{p}$, we therefore have
\begin{equation*}
\begin{split}
\tilde \sigma^{p^n} &= \sum_{\ell(\alpha_i) = p^n} -c_i \Psi^{p^n-1}\cdot \alpha_i + \sum_{i=1}^{\rk(A)} c_i \Psi^{\ell(\alpha_i)-1}\cdot \alpha_i
\\&= \sum_{\ell(\alpha_i) \neq p^n} c_i \Psi^{\ell(\alpha_i)-1} \cdot \alpha_i.
\end{split}\end{equation*}
Because $\mathfrak{G}(M,\ve[c])$ can be generated by $\tilde \sigma, \alpha_1, \cdots, \alpha_{\rk(M)}$ so that the relations for $\mathfrak{G}(M,\ve[d])$ are satisfied, it must be that these two groups are isomorphic.  Since the projections of these groups onto $G$ are compatible, these two group-theoretic embedding problems are isomorphic.
\end{proof}

\begin{lemma}
Suppose that $M = \oplus_{i=1}^{\rk(M)} \langle \alpha_i \rangle$, and let $\ve[c] \in \F_p^{\rk(M)}$ be given.  Suppose that one can choose $1 \leq l \leq \rk(M)$ so that $\ell(\alpha_l)$ is minimal subject to the condition that $c_l \neq 0$ and $\ell(\alpha_l)<p^n$. Let $\ve[e_l]$ be the $l$th standard basis vector.  Then $\mathfrak{G}(M,\ve[c])\simeq \mathfrak{G}(M,\ve[e_l])$ as group-theoretic embedding problems over $G$.  On the other hand, if no such $l$ exists, then $\mathfrak{G}(M,\ve[c]) \simeq \mathfrak{G}(M,\ve[0])$ as group-theoretic embedding problems over $G$.
\end{lemma}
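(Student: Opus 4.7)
The plan is to dispatch the two cases separately. The easy case is when no suitable $i$ exists: then every nonzero entry $c_j$ of $\ve[c]$ satisfies $\ell(\alpha_j) = p^n$, so the previous lemma precisely zeroes out all of these entries without changing the isomorphism class of the embedding problem, yielding $\mathfrak{G}(A,\ve[c]) \simeq \mathfrak{G}(A,\ve[0])$.

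For the main case I would first invoke the previous lemma to assume $c_j = 0$ whenever $\ell(\alpha_j) = p^n$; every nonzero $c_j$ then satisfies $\ell \leq \ell(\alpha_j) < p^n$, where $\ell := \ell(\alpha_i)$. Rescaling $\alpha_i$ by $c_i^{-1} \in \F_p^\times$ produces another $\F_p[G]$-generator of $\langle\alpha_i\rangle$, so I may further assume $c_i = 1$. The crux is then to introduce new $\F_p[G]$-generators of $A$ by setting $\tilde\alpha_j = \alpha_j$ for $j \neq i$ and
$$\tilde\alpha_i = \alpha_i \cdot \prod_{\substack{j\neq i\\c_j\neq 0}} \alpha_j^{c_j(\sigma-1)^{\ell(\alpha_j)-\ell}}.$$
A direct computation gives $\tilde\alpha_i^{(\sigma-1)^{\ell-1}} = \prod_j \alpha_j^{c_j(\sigma-1)^{\ell(\alpha_j)-1}}$, so relation (4) of $\mathfrak{G}(A,\ve[c])$ transforms into $\hat\sigma^{p^n} = \tilde\alpha_i^{(\sigma-1)^{\ell-1}}$, which is exactly relation (4) of $\mathfrak{G}(A,\ve[e_i])$.

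The step I expect to be the main obstacle is verifying that $\{\tilde\alpha_j\}$ still yields a direct sum decomposition $A = \bigoplus_j \langle \tilde\alpha_j\rangle$ with $\ell(\tilde\alpha_j) = \ell(\alpha_j)$ for every $j$, so that relations (1)--(3) of Definition \ref{def:initial.group.extension} transport correctly. I would handle this by showing that the assignment $\alpha_j \mapsto \tilde\alpha_j$ extends to an $\F_p[G]$-module automorphism $\phi$ of $A$. Well-definedness reduces to checking $\tilde\alpha_i^{(\sigma-1)^{\ell}} = 1$, which holds because each factor $\alpha_j^{c_j(\sigma-1)^{\ell(\alpha_j)-\ell}}$ appearing in the defining product of $\tilde\alpha_i$ has length exactly $\ell$ (the twist $(\sigma-1)^{\ell(\alpha_j)-\ell}$ reduces the length of $\alpha_j$ from $\ell(\alpha_j)$ down to $\ell$). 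Surjectivity follows by solving for $\alpha_i$ in terms of $\tilde\alpha_i$ and the unchanged $\tilde\alpha_j$, and finite-dimensionality of $A$ then upgrades surjectivity to bijectivity. Once $\phi$ is an automorphism, the new generators automatically satisfy the analogues of relations (1)--(3) with the prescribed lengths, and combined with the transformed relation (4) they exhibit $\mathfrak{G}(A,\ve[c])$ as $\mathfrak{G}(A,\ve[e_i])$. Since $\hat\sigma$ still projects to $\sigma \in G$, this furnishes the desired isomorphism of group-theoretic embedding problems.
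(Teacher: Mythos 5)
Your argument is correct and follows the same route as the paper: both proofs replace $\alpha_i$ by a new generator obtained by multiplying in twisted copies of the other $\alpha_j$ with $c_j\neq 0$, so that relation (4) collapses to the $\ve[e_i]$ form, and both first handle the $\ell(\alpha_j)=p^n$ coordinates via the preceding lemma. You fill in the verification the paper leaves to the reader (that the new generators still give a direct-sum decomposition of the same isomorphism type) by exhibiting an explicit $\F_p[G]$-automorphism, which is a reasonable way to make that step rigorous; note only a small slip of wording: to normalize the coefficient to $1$ you should replace $\alpha_i$ by $\alpha_i^{c_i}$, not $\alpha_i^{c_i^{-1}}$.
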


\begin{proof}
Lemma \ref{le:replace.free.with.zero} allows us to assume $\ve[c]$ to have $0$ coordinate in those positions $j$ corresponding to $\ell(\alpha_j) = p^n$.  If no such $l$ exists as in the statement of the theorem, then we appeal to the previous lemma to conclude that $\mathfrak{G}(M,\ve[c]) \simeq \mathfrak{G}(M,\ve[0])$ as group-theoretic embedding problems.

Now suppose that $l$ is chosen as in the statement of the theorem, and that $c_j = 0$ when $\ell(\alpha_j) = p^n$.  Define $\beta_j = \alpha_j$ for every $j \neq l$, and let $$\beta_l = c_l \cdot \alpha_l + \sum_{c_j \neq 0} c_j \Psi^{\ell(\alpha_j)-\ell(\alpha_i)}\cdot \alpha_j.$$  It is obvious that $\beta_l \in \oplus_{i=1}^{\rk(M)} \langle \alpha_i \rangle$ and $\alpha_l \in \oplus_{i=1}^{\rk(M)}\langle \beta_i\rangle$, so that $\{\beta_i\}_{i=1}^{\rk(M)}$ generates the same $\F_p[G]$-module as $\{\alpha_i\}_{i=1}^{\rk(M)}$.   It is equally clear that $\ell(\beta_i) = \ell(\alpha_i)$, and that $${\Psi^{\ell(\beta_l)-1}}\cdot\beta_l = {\Psi^{\ell(\alpha_l)-1}}\cdot \left({c_l}\cdot \alpha_l + \sum_{c_j \neq 0} {c_j\Psi^{\ell(\alpha_j)-\ell(\alpha_l)}}\cdot\alpha_j\right) = \hat \sigma^{p^n}.$$  
Hence $\mathfrak{G}(M,\ve[c])$ satisfies the relations defining $\mathfrak{G}(M,\ve[e_l])$.  Since the projections of these two groups onto $G$ are compatible, they are isomorphic as group-theoretic embedding problems.
\end{proof}

In light of the previous theorem, we see that the defining characteristics for an extension of $G$ by a finite $\F_p[G]$-module $M$ are the isomorphism type of $M$, together with the smallest length for a generator of $M$ which appears in relation (4) from Definition \ref{def:initial.group.extension}. Hence we introduce a new (and simpler) notation to keep track of the extensions of $G$ by $M$.

\begin{definition}
Suppose that $M = \oplus_{i=1}^{\rk(M)} \langle \alpha_i \rangle$, and let $1 \leq \mu < p^n$ be given so that there exists some $1 \leq l \leq \rk(M)$ with $\ell(\alpha_l) = \mu$.  Then we define $M \bullet_\mu G$ to be the group $\mathfrak{G}(M,\ve[e_l])$.  We define $M \bullet_{p^n} G$ to be $\mathfrak{G}(M,\ve[0]) \simeq M \rtimes G$, regardless of whether $M$ contains a summand of dimension $p^n$.
\end{definition}

\begin{remark*}
Of course our definition for $M \bullet_\mu G$ hasn't specified which of the various $l$ satisfying $\ell(\alpha_l) = \mu$ should be chosen in the definition.  This small ambiguity is trivial to resolve and doesn't disturb the isomorphism class of the group-theoretic problem in question, so we won't address it.  One might also point out that the definition of $\mathfrak{G}(M,\ve[e_l])$ requires us to name generators for $M$, though we won't always be so careful to do so (nor will we worry about ambiguities that arise when we don't specifically list generators).  In those times where this specificity is important, we will name generators for $M$ explicitly.
\end{remark*}

\begin{theorem}\label{th:counting.isom.types.of.group.embedding.problems}
For a finite $\F_p[G]$-module $M$, there are $\rknf(M)+1$ many isomorphism types for embedding problems over $G$ with kernel $M$: one is $M \rtimes G$, and the others correspond to $M \bullet_\mu G$ where $1\leq \mu < p^n$ ranges over the dimensions of non-free summands in an $\F_p[G]$-decomposition of $M$.
\end{theorem}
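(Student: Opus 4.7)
The plan is to combine the two preceding lemmas (which classify $\mathfrak{G}(A,\vec c)$ up to isomorphism) with an isomorphism invariant $\delta(\hat G)$ that separates the remaining candidates. By Proposition \ref{prop:every.extension.is.of.general.form}, every extension has the form $\mathfrak{G}(A,\vec c)$; the first lemma lets us assume $c_i = 0$ for each $i$ with $\ell(\alpha_i) = p^n$, and the second lemma then reduces $\vec c$ to either $\vec 0$ or to $\vec e_i$ for some $i$ with $\ell(\alpha_i) < p^n$. Thus every embedding problem with kernel $A$ is isomorphic to $A \rtimes G$ or to $A \bullet_\lambda G$ for some non-free length $\lambda$ appearing in the $\F_p[G]$-decomposition of $A$.

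To show that these candidates are pairwise non-isomorphic, I would exploit the $p^n$-th power of a lift. For any lift $\hat\sigma$ of $\sigma$ in $\hat G$, the element $\hat\sigma^{p^n}$ lies in $A^G$, and a second lift $a\hat\sigma$ gives $(a\hat\sigma)^{p^n} = a^{1+\sigma+\cdots+\sigma^{p^n-1}} \hat\sigma^{p^n} = a^{(\sigma-1)^{p^n-1}} \hat\sigma^{p^n}$, using the identity $1+\sigma+\cdots+\sigma^{p^n-1} \equiv (\sigma-1)^{p^n-1} \pmod{p}$ in $\F_p[G]$. So the coset of $\hat\sigma^{p^n}$ modulo $A^{(\sigma-1)^{p^n-1}}$ is independent of the lift. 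Any isomorphism of embedding problems takes this coset to its image under an induced $\F_p[G]$-module automorphism of $A$, and such automorphisms preserve the filtration $\{A^{(\sigma-1)^{k-1}}\}_{k}$. Hence the quantity $\delta(\hat G)$, defined as the maximum $k \in \{1,\ldots,p^n\}$ such that some representative of this coset lies in $A^{(\sigma-1)^{k-1}}$, is a genuine isomorphism invariant of the group-theoretic embedding problem.

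Computing this invariant on the candidates completes the argument. For $A \rtimes G$ the obvious lift yields $\hat\sigma^{p^n} = 1$, and so $\delta(\hat G) = p^n$. For $A \bullet_\lambda G$ the distinguished lift satisfies $\hat\sigma^{p^n} = \alpha^{(\sigma-1)^{\lambda-1}}$ with $\alpha$ generating a length-$\lambda$ summand, giving $\delta(\hat G) \geq \lambda$ immediately. The crux (and main obstacle) is the reverse inequality: I must verify that no choice of lift pushes $\hat\sigma^{p^n}$ into $A^{(\sigma-1)^\lambda}$. Since $\lambda < p^n$, the inclusion $A^{(\sigma-1)^{p^n-1}} \subseteq A^{(\sigma-1)^\lambda}$ reduces this to showing $\alpha^{(\sigma-1)^{\lambda-1}} \notin A^{(\sigma-1)^\lambda}$. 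Working in a decomposition $A = \bigoplus_j \langle \beta_j \rangle$ with $\alpha = \beta_m$ and $\ell(\beta_m) = \lambda$, the submodule $A^{(\sigma-1)^\lambda}$ is supported entirely on summands of length strictly greater than $\lambda$, while $\alpha^{(\sigma-1)^{\lambda-1}}$ is the nonzero fixed generator of the $\beta_m$-summand and so cannot lie in $A^{(\sigma-1)^\lambda}$. This gives $\delta(A \bullet_\lambda G) = \lambda$, so the candidates are pairwise distinct, and the count of isomorphism classes follows.
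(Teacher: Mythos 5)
Your proof is correct and takes a genuinely different route from the paper. The paper handles the pairwise non-isomorphism in two separate ad hoc ways: to separate $A \rtimes G$ from $A \bullet_\lambda G$ with $\lambda < p^n$ it counts elements of order exceeding $p^n$ in both groups, and to separate $A \bullet_{\lambda_1} G$ from $A \bullet_{\lambda_2} G$ with $\lambda_1 < \lambda_2 < p^n$ it exhibits a surjection of embedding problems from $A \bullet_{\lambda_1} G$ onto $\F_p[G]/(\sigma-1)^{\lambda_1} \bullet_{\lambda_1} G$ and shows no such surjection exists out of $A \bullet_{\lambda_2} G$. You instead construct a single uniform invariant $\delta(\hat G)$: you observe that the class of $\hat\sigma^{p^n}$ modulo $A^{(\sigma-1)^{p^n-1}}$ — which is exactly the class in $H^2(G,A) \cong A^G/A^{(\sigma-1)^{p^n-1}}$ that classifies the extension — is lift-independent, that isomorphisms of embedding problems act on this class through $\F_p[G]$-module automorphisms of $A$ which preserve the filtration $\{A^{(\sigma-1)^{k-1}}\}_k$, and then you read off the filtration step directly from the defining relations: $\delta(A\rtimes G)=p^n$ and $\delta(A\bullet_\lambda G)=\lambda$. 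The key reverse inequality $\alpha^{(\sigma-1)^{\lambda-1}}\notin A^{(\sigma-1)^\lambda}$ is correctly justified by noting the two sit in complementary summands of the decomposition. Your approach is arguably more conceptual (it identifies the precise cohomological invariant behind the classification) and avoids the case split; the paper's element-counting argument, in exchange, is more elementary and avoids invoking the action of $\mathrm{Aut}_{\F_p[G]}(A)$ on lifts, but it requires two unrelated arguments. Both arguments quietly treat repeated non-free lengths as giving the same group, consistent with the paper's intent.
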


\begin{remark*}
This theorem was shown in the case that $M = \langle \alpha \rangle$ is a cyclic submodule by Waterhouse in \cite{Wa}.  In the case where the cyclic submodule isn't isomorphic to $\F_p[G]$, the two possibilities are given by the semi-direct product and another group which --- in our notation --- is $\langle \alpha \rangle \bullet_{\ell(\alpha)} G$.
\end{remark*}

\begin{proof}
As usual, write $M = \oplus_{i=1}^{\rk(M)}\langle \alpha_i \rangle$.  We have already shown that any such group-theoretic embedding problem is isomorphic to $\mathfrak{G}(M,\ve[c])$ for some $\ve[c] \in \F_p^{\rk(M)}$, and that this group is isomorphic to either $M \rtimes G = M \bullet_{p^n} G$ or $M \bullet_\mu G$ for some $1 \leq \mu <p^n$ such that there exists $1 \leq i \leq \rk(M)$ with $\mu = \ell(\alpha_i)$. Now we must show that if $\mu_1 \neq \mu_2$, then $M \bullet_{\mu_1} G$ and $M \bullet_{\mu_2} G$ are not isomorphic as group-theoretic embedding problems.

First, consider the case $\mu_1 = p^n$ and $\mu_2 < p^n$; our strategy will be to count the number of elements of order greater than $p^n$ in both groups.  If we take an arbitrary element's $p^n$th power in either group, we find
\begin{equation}\label{eq:computing.elements.of.order.pn}
\begin{split}
\left(\left(\sum_{i} {f_i(\sigma)}\cdot \alpha_i\right) \hat \sigma^j\right)^{p^n} &= \left(\sum_{i} {(1+\sigma^j + \cdots + (\sigma^j)^{p^n-1})f_i(\sigma)}\cdot \alpha_i\right) (\hat \sigma^j)^{p^n} \\
&= \left(\sum_i {(\sigma^{j}-1)^{p^n-1}f_i(\sigma)}\cdot \alpha_i\right)(\hat \sigma^{p^n})^{j}.
\end{split}
\end{equation}
In either group, if $j=p^k h$ for some $k>0$, then this element is trivial: certainly $(\hat \sigma^{p^n})^{j}$ is trivial since $\hat \sigma$ has order at most $p^{n+1}$, and we also have
$$\sum_{s=1}^{p^n-1} (\sigma^j)^s \equiv (\sigma^j-1)^{p^n-1} \equiv (\sigma^h-1)^{p^{n+k}-p^k} \in \langle \Psi^{p^n} \rangle \subseteq \F_p[G].$$
Now when $(j,p)=1$, equation (\ref{eq:computing.elements.of.order.pn}) becomes 
\begin{equation*}\begin{split}
\left(\left(\sum_{i} {f_i(\sigma)}\cdot \alpha_i\right) \hat \sigma^j\right)^{p^n} 
&= c\cdot \left(\sum_{\ell(\alpha_i)=p^n} {\Psi^{p^n-1}f_i(\sigma)}\cdot \alpha_i\right)(\hat \sigma^{p^n})^j
\end{split}\end{equation*} where $c$ is the multiplicative inverse of $j$ in $\F_p^\times$.  In the group $M \bullet_{p^n} G$ the term $(\hat \sigma^{p^n})^j$ vanishes, and so the term is nonzero only when at least one $f_i(\sigma) \not\in \langle \Psi\rangle \subseteq \F_p[G]$.  In $M\bullet_{\mu_2} G$, however, the term $(\hat \sigma^{p^n})^j$ is nonzero and independent from the terms in the sum, and hence this element is nonzero for all $f_i(\sigma)\in  \F_p[G]$.  Hence $M \bullet_{\mu_2} G$ has more elements of order $p^{n+1}$ than $M \bullet_{p^n} G$, so these two groups are not isomorphic.

With this case resolved, suppose without loss that $\mu_1 < \mu_2<p^n$. The defining relation for $M \bullet_{\mu_1} G$ is that we can find a lift $\hat \sigma$ and a generator $\alpha_i \in M$ with $\ell(\alpha_i) = \mu_1$ so that $$\hat \sigma^{p^n} = {\Psi^{\mu_1-1}}\alpha_i.$$  Note that this same equation holds true if we quotient by the normal subgroup $\langle \alpha_j \rangle_{j \neq i}$, and hence we have a surjection of group-theoretic embedding problems: 
$$\xymatrix{
M \bullet_{\mu_1} G \ar@{->>}[r] \ar@{->>}[d]& G \ar@{=}[d]\\
\F_p[G]/\langle \Psi^{\mu_1}\rangle \bullet_{\mu_1} G \ar@{->>}[r] & G
}.$$ 

On the other hand, consider the group $M \bullet_{\mu_2} G$; the defining relation for this group tells us we can choose a lift $\tilde \sigma$ for $\sigma$ and a generator $\beta_j$ of $M$ with $\ell(\beta_j) = \mu_2$ and so that $$\tilde \sigma^{p^n} = {\Psi^{\mu_2-1}}\cdot \beta_j.$$  Now suppose we have a surjection of group-theoretic embedding problems of the form
$$\xymatrix{
M \bullet_{\mu_2} G \ar@{->>}[r] \ar@{->>}[d]_{\psi}& G \ar@{=}[d]\\
\F_p[G]/\langle \Psi^{\mu_1}\rangle \bullet_{\lambda} G \ar@{->>}[r] & G,
}$$ where $\lambda\in \{\mu_1,p^n\}$.  (I.e., any group-theoretic embedding problem over $G$ whose kernel is the module $\F_p[G]/\langle \Psi^{\mu_1}\rangle$.) The five lemma tells us that these arise from quotients of $M$ isomorphic to $\F_p[G]/\langle \Psi^{\mu_1}\rangle$ --- i.e., from submodules $S$ of $M$ so that $M/S \simeq \F_p[G]/\langle \Psi^{\mu_1}\rangle$.  Notice that for any such $S$ we have ${\Psi^{\mu_2-1}} \cdot \beta_j\in S$, since otherwise $M/S$ would contain a cyclic submodule generated by $\beta_j$ that has length at least $\mu_2>\mu_1$.  

Now notice that $\psi(\tilde \sigma)$ is a lift of $\sigma$ in $\F_p[G]/\langle \Psi^{\mu_1}\rangle \bullet_\lambda G$.  Because $$\tilde \sigma^{p^n} = {\Psi^{\mu_2-1}}\cdot \beta_j$$ within $M \bullet_{\mu_2} G$, we therefore have $\psi(\tilde\sigma)^{p^n} = 0 \in M/S$.  Hence it follows that $\lambda=p^n$.  Since $\F_p[G]/\langle \Psi^{\mu_1}\rangle \bullet_{\mu_1} G \not\simeq \F_p[G]/\langle \Psi^{\mu_1}\rangle \bullet_{p^n} G$, the result follows.
\end{proof}

\begin{proposition}\label{prop:embedding.problems.with.different.kernels.are.different}
If $A$ and $M$ are non-isomorphic finite $\F_p[G]$-modules, then any group-theoretic embedding problem $\xymatrix{\hat G_M \ar@{->>}[r] & G}$ with kernel $M$ is not isomorphic to any group-theoretic embedding problem $\xymatrix{\hat G_A \ar@{->>}[r] & G}$ with kernel $A$.
\end{proposition}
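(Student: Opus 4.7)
The plan is to show that the kernel of a group-theoretic embedding problem is recovered, as an $\F_p[G]$-module, from the isomorphism class of the embedding problem. Once this is established, the proposition is immediate: if the embedding problems $(\hat G_A,\varphi_A)$ and $(\hat G_M,\varphi_M)$ were isomorphic, then their kernels $A$ and $M$ would agree as $\F_p[G]$-modules, contrary to assumption.

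To extract the kernel from the embedding problem, suppose we are given an isomorphism $\psi:\hat G_A \to \hat G_M$ of group-theoretic embedding problems, so that $\varphi_M \circ \psi = \varphi_A$. Since $\psi$ carries $\ker \varphi_A$ isomorphically onto $\ker \varphi_M$, the restriction $\psi|_A$ is a group isomorphism from $A$ onto $M$ (viewed simply as elementary abelian $p$-groups). The first step is thus essentially a diagram chase.

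The second, and only genuinely substantive, step is to show that $\psi|_A$ is $G$-equivariant with respect to the prescribed $\F_p[G]$-actions. Recall that the $G$-action on the kernel of either embedding problem is defined by conjugation by any chosen lift of $\sigma$; the compatibility condition in the definition of an embedding problem with prescribed kernel guarantees this agrees with the given $\F_p[G]$-action, and it is independent of the chosen lift because the kernel is abelian. Fix a lift $\hat\sigma \in \hat G_A$ of $\sigma$. Since $\varphi_M(\psi(\hat\sigma)) = \varphi_A(\hat\sigma) = \sigma$, the element $\psi(\hat\sigma)$ is a lift of $\sigma$ in $\hat G_M$. For any $a \in A$ we then compute
\[
\psi(a^\sigma) \;=\; \psi\bigl(\hat\sigma\, a\, \hat\sigma^{-1}\bigr) \;=\; \psi(\hat\sigma)\,\psi(a)\,\psi(\hat\sigma)^{-1} \;=\; \psi(a)^\sigma,
\]
where the final equality uses that the $G$-action on $M$ is given by conjugation by any lift of $\sigma$. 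Since $\sigma$ generates $G$, this suffices to conclude that $\psi|_A$ intertwines the $G$-actions.

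The third and final step is to combine the previous two: $\psi|_A:A \to M$ is a group isomorphism that is also $G$-equivariant, hence an isomorphism of $\F_p[G]$-modules. This contradicts the hypothesis $A \not\simeq M$, so no such $\psi$ can exist. I do not anticipate any real obstacle here; the proposition is essentially a bookkeeping statement about the definition of an embedding problem with prescribed kernel, and the only point worth checking carefully is the $G$-equivariance in the middle step.
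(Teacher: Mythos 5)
Your proof is correct, and it is genuinely different in approach from the paper's. You argue directly: an isomorphism $\psi$ of embedding problems carries the kernel of one structure map onto the kernel of the other, and conjugation by the image of a lift of $\sigma$ shows $\psi|_A$ is $G$-equivariant, hence an $\F_p[G]$-isomorphism $A \simeq M$. The paper instead picks the largest $j$ with $\dim A_{\{j\}} \neq \dim M_{\{j\}}$ (say $\dim A_{\{j\}}$ larger, equal to $D_j$), notes that $A$ then admits a module surjection onto $\bigl(\F_p[G]/(\sigma-1)^j\bigr)^{D_j}$ while $M$ does not, and concludes that $\hat G_A$ admits a surjection of embedding problems onto $\bigl(\F_p[G]/(\sigma-1)^j\bigr)^{D_j}\bullet_\mu G$ while $\hat G_M$ cannot. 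The paper's route is more indirect and is chosen to parallel the quotient-counting strategy used in Theorem \ref{th:counting.isom.types.of.group.embedding.problems}; it also implicitly relies on the same observation you make explicit, namely that maps of embedding problems induce $\F_p[G]$-module maps between their kernels. Your version is shorter and isolates that observation as the whole content of the proposition, which is cleaner for this particular statement; what you lose relative to the paper is the reusable fact that inequality of the invariants $\dim A_{\{j\}}$ already produces a concrete distinguishing quotient, which fits the paper's later emphasis on quotients $\hat G \twoheadrightarrow Q \bullet_\mu G$.
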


\begin{proof}
Let $j$ be the largest number such that $|M_{\{j\}}| \neq |A_{\{j\}}|$, and assume that $|A_{\{j\}}|>|M_{\{j\}}|$. (Here we use the notation from Equation (\ref{eq:subbrace.notation}).)  Then there exists a surjection$\xymatrix{A \ar@{->>}[r] & \left(\F_p[G]/\langle \Psi^j\rangle\right)^{D_j}}$ where $D_j = \dim_{\F_p}(A_{\{j\}})$, whereas $M$ has no such surjection.  Hence we have a surjection of embedding problems  
$$\xymatrix{
\hat G_A \ar@{->>}[r] \ar@{->>}[d] & G \ar@{=}[d]\\
\left(\frac{\F_p[G]}{\langle \Psi^j\rangle}\right)^{D_j} \bullet_{\mu} G \ar@{->>}[r] & G}$$ but no such surjection for $\hat G_M$.
\end{proof}

\begin{example}
For the sake of concreteness, we finish this section by listing all the groups of order $p^4$ which can be realized as an extension of a cyclic $p$-group $G$ by an $\F_p[G]$-module $M$.  Obviously there is only one such group with $G \simeq \Z/p^4\Z$, and no such groups when $G \simeq \Z/p^k\Z$ for $k>4$.

When $G \simeq \Z/p^3\Z$ it must be the case that $M$ is $1$-dimensional.  Hence $M \simeq \F_p$, and there are two possible extensions: $\F_p \bullet_{1} \Z/p^3\Z$ and $\F_p \bullet_{p^3} \Z/p^3\Z$.  The former is a nonsplit extension of $\Z/p^3\Z$ by $\Z/p\Z$ which must be abelian (since the kernel has trivial $G$-action), and so must be $\Z/p^4\Z$; the latter is a split extension of $\Z/p^3\Z$ by $\Z/p\Z$ which is abelian, and so must be $\Z/p^3\Z \times \Z/p\Z$.

When $G \simeq \Z/p^2\Z$ we have two possible structures for $M$: either $M \simeq \F_p \oplus \F_p$ or $M \simeq \F_p[G]/\langle \Psi^2\rangle$.  Since in each case there is only one non-free isomorphism class of indecomposable in the structure of $M$, we have four possible groups: 
\begin{equation*}\begin{split}
\left(\F_p \oplus \F_p\right) \bullet_1  \Z/p^2\Z , \quad
\left(\F_p \oplus \F_p \right) \bullet_{p^2} \Z/p^2\Z,\quad 
\left(\F_p[G]/\langle \Psi^2\rangle\right) \bullet_{2}\Z/p^2\Z,\quad  
\left(\F_p[G]/\langle \Psi^2\rangle\right) \bullet_{p^2}\Z/p^2\Z.
\end{split}
\end{equation*}
One can show that the first group in this list is isomorphic to $\Z/p^3\Z \times \Z/p\Z$, and hence to $\F_p \bullet_{p^3} \Z/p^3\Z$ from the previous case.  Likewise the second group is isomorphic to $\Z/p^2\Z \times \Z/p\Z \times \Z/pZ$, and of course $(\F_p[G]/\langle \Psi^2\rangle)\bullet_{p^2}\Z/p^2\Z \simeq \F_p[G]/\langle \Psi^2 \rangle \rtimes \Z/p^2\Z$. This last group is also studied in \cite{M1} under the name $G_3$: 
$$G_3 = \left\langle g_1,g_2,g_3,g_4: g_1^p=g_4; g_2^p=g_3^p=g_4^p=1; g_2g_1=g_1g_2g_3; [g_3,g_i]=[g_4,g_i]=1 \mbox{ for all }i\right\rangle.$$  We can identify $(\F_p[G]/\langle \Psi^2\rangle)\bullet_{p^2}\Z/p^2\Z$ with $G_3$ by setting $g_1 = \hat \sigma^{-1}$, $g_2 = 1 \in \F_p[G]/\langle \Psi^2\rangle$, $g_3 = \Psi \in \F_p[G]/\langle \Psi^2 \rangle$ and $g_4=\hat \sigma^{-p}$.  Finally, $\left(\F_p[G]/\langle \Psi^2\rangle\right) \bullet_{2}\Z/p^2\Z$ is also studied in \cite{M1} under the name $G_5$:
$$G_5=\langle g_1,g_2,g_3,g_4: g_1^p=g_3;g_3^p=g_4;g_2^p=g_4^p=1;g_2g_2=g_1g_2g_4;[g_3,g_i]=[g_4,g_i]=1 \mbox{ for all }i\rangle.$$
We can identity $G_5$ with $\mathfrak{G}(\F_p[G]/\langle \Psi^2\rangle,-\ve[e_1])$ by setting $g_1=\hat \sigma^{-1}, g_2=1 \in \F_p[G]/\langle \Psi^2 \rangle, g_3=\hat \sigma^{-p},g_4=\Psi \in \F_p[g]/\langle \Psi^2 \rangle$.  This is sufficient since we have already shown $\F_p[G]/\langle \Psi^2 \rangle \bullet_2 \Z/p^2\Z\simeq \mathfrak{G}(\F_p[G]/\langle \Psi^2\rangle,-\ve[e_1])$.


When $G \simeq \Z/p\Z$ there are more possible structures for $M$: $\oplus_3 \F_p$, $\F_p \oplus \F_p[G]/\langle \Psi^2\rangle$, and $\F_p[G]/\langle \Psi^3\rangle$.  The corresponding groups are then
\begin{gather*}
\left(\oplus_3 \F_p\right) \bullet_1 \Z/p\Z ,\quad 
\left(\oplus_3 \F_p\right) \bullet_{p} \Z/p\Z ,\quad 
\left(\F_p \oplus \F_p[G]/\langle \Psi^2 \rangle\right) \bullet_{1} \Z/p\Z ,\quad 
\left(\F_p \oplus \F_p[G]/\langle \Psi^2 \rangle\right) \bullet_{2} \Z/p\Z ,\quad \\
\left(\F_p \oplus \F_p[G]/\langle \Psi^2 \rangle\right) \bullet_{p^2} \Z/p\Z ,\quad 
\left(\F_p[G]/\langle \Psi^3 \rangle\right) \bullet_{3} \Z/p\Z,\quad 
\left(\F_p[G]/\langle \Psi^3 \rangle\right) \bullet_{p} \Z/p\Z.
\end{gather*}

\end{example}

\section{Elementary $p$-abelian extensions of fields}\label{sec:embedding.problems.in.terms.of.classical.parameterizing.spaces}

Recall our standing notations: $G=\Gal(K/F)  = \langle \sigma \rangle \simeq \Z/p^n\Z$ and $\Psi:=\sigma-1$.  If we are interested in computing $\Gal(L/F)$ when $L/K$ is an elementary $p$-abelian extension which is additionally Galois over $F$, then the preceding section tells us that we need to understand the module structure of $\Gal(L/K)$ together with a value for $\hat \sigma^{p^n}$, where $\hat \sigma \in \Gal(L/F)$ is a lift of $\sigma \in \Gal(K/F)$.  In this section we will consider how to determine these properties in terms of the classic parameterizing spaces for elementary $p$-abelian extensions.

\subsection{Classifying elementary $p$-abelian extensions}
Parametrizing spaces for elementary $p$-abelian extensions of fields have been known for quite some time.  When $\ch{K} \neq p$, let $\hat K = K(\xi_p)$ and $\hat F = F(\xi_p)$.  Note that $([\hat F: F],p) = 1$, and hence $\Gal(\hat K/\hat F)\simeq G$.  The Galois group of $\hat F/F$ is cyclic, and for a generator $\epsilon$ we write $\epsilon(\xi_p) = \xi_p^t$.  Relative Kummer theory tells us that the $\oplus^k \Z/p\Z$-extensions of $\hat K$ correspond to $k$-dimensional $\F_p$-subspaces of $\hat K^\times/\hat K^{\times p}$ which are in the $t$-eigenspace of $\epsilon$; we can then recover $(\oplus^k \Z/p\Z)$-extensions of $K$ via descent.  The correspondence between an $\F_p$-subspace $M$ within $\hat K^\times/\hat K^{\times p}$ and an extension $L/K$ is given explicitly by 
\begin{equation*}\begin{split}
M &\mapsto \mbox{ the maximal $p$-extension of $K$ in }\hat K\left(\root{p}\of{m}: m \in M\right)
\\ L &\mapsto \mbox{ the $t$-eigenspace of $\epsilon$ within }\frac{L(\xi_p)^{\times p}\cap \hat K^\times}{\hat K^{\times p}}.
\end{split}\end{equation*}

In the case where $\ch{K}=p$, the parametrizing space is given to us by Artin-Schreier theory, which says that elementary $p$-abelian extensions of $K$ are given by $\F_p$-subspaces of $K/\wp(K)$, where $\wp(K) = \{k^p-k: k \in K\}.$  For $k \in K$ we write $\rho(k)$ to denote a root of the equation $x^p-x-k$.  Using this notation, the correspondence is given by 
\begin{equation*}\begin{split}M &\mapsto K\left(\rho(m): m \in M\right)\\L &\mapsto \frac{\wp(L) \cap K}{\wp(K)}.\end{split}\end{equation*}


Regardless of the field $K$ under consideration, we will write $J(K)$ for the $\F_p$-space whose subspaces parametrize elementary $p$-abelian extensions; that is to say, we write $J(K)$ for the $t$-eigenspace of $\epsilon$ within $\hat K^\times/\hat K^{\times p}$ when $\ch{K} \neq p$, and we write $J(K)$ for $K/\wp(K)$ when $\ch{K}=p$.  When we consider $J(K)$ as an $\F_p[G]$-module we will follow our standard notation and write the group operation on $J(K)$ additively and the $\F_p[G]$-action multiplicatively.  There are two natural maps connecting $J(K)$ and $J(F)$ that will be important to us: $$N:J(K) \to J(F) \quad \mbox{ and } \quad \iota:J(F) \to J(K).$$  The first is induced by either $N_{\hat K/\hat F}:\hat K \to \hat F$ (when $\ch{K} \neq p$) or $\text{Tr}_{K/F}:K \to F$ (when $\ch{K}=p$), and the second is induced by the usual inclusion of fields $\hat F \hookrightarrow \hat K$ when $\ch{K} \neq p$ and $F \hookrightarrow K$ when $\ch{K}=p$).

By putting additional structure on $J(K)$, one can make $J(K)$ a classifying space for a broader range of groups.  In particular, we will focus on the $\F_p[G]$-structure of $J(K)$ and ask what it tells us about the Galois-ness of the extension $L/F$.  The primordial result in this vein is that an elementary $p$-abelian extension $L/K$ is Galois over $F$ if and only if the corresponding $M \subseteq J(K)$ is an $\F_p[G]$-module; this was mentioned in \cite{Wa} when $\ch{K} \neq p$, and the proof is straightforward in the $\ch{K}=p$ case as well.  


\subsection{Computations for cyclic modules}

When $L/K$ is an elementary $p$-abelian extension that corresponds to a cyclic submodule in $J(K)$, Waterhouse was able to compute the structure of $\Gal(L/F)$.  The key ingredient in his analysis is to note that if $L$ is a particular finite elementary $p$-abelian extension of $K$ and $M$ is the corresponding $\F_p$-space in $J(K)$, then there is a $G$-equivariant perfect pairing 
\begin{equation*}\begin{split}
\Gal(L/K) \times M &\to \F_p
\end{split}\end{equation*}
which exhibits a duality between these two groups.  Since we know from Galois theory that the Galois groups fit into a short exact sequence 
$$\xymatrix{1 \ar[r] & \Gal(L/K) \ar[r] & \Gal(L/F) \ar[r] & \Gal(K/F) \ar[r] & 1},$$
we have that $\Gal(L/F)$ is an extension of $\Gal(K/F)$ by $\Gal(L/K)$.  But since $\Gal(L/K) \simeq \check{M}$, and since all finitely-generated $\F_p[G]$-modules are self-dual (see \cite[Sec.~1]{MSSauto}), one can interpret $\Gal(L/F)$ as an extension of $\Gal(K/F)$ by $M$.  All one needs to determine then is the value of $\hat \sigma^{p^n}$, which Waterhouse accomplishes using a particular field-theoretic computation on a generator for $M$.

Fortunately, the proofs carry over into the characteristic $p$ setting almost entirely unchanged, since they depend only on having a parametrizing space for elementary $p$-abelian extensions whose $\F_p[G]$-module theory encodes the property of being Galois over $F$, together with the $G$-equivariant Kummer pairing.  In the characteristic $p$ setting, the Kummer pairing is replaced with the analogous Artin-Schreier pairing, defined as follows. Note that any two roots of $x^p-x-k$ differ by an element of $\F_p$.  Furthermore, an element $\tau \in \Gal(L/K)$ acts by permuting roots of $x^p-x-k$, and hence we can define $$\langle \tau,m \rangle:=\tau\left(\rho(m)\right)-\rho(m).$$ 

\begin{proposition}\cite[Lemma 3.1]{BS}
Suppose that $\ch{K} = p$.  If $L/K$ is an elementary $p$-abelian extension and $M\subseteq J(K)$ is the corresponding submodule, then the Artin-Schreier pairing is $G$-equivariant and perfect.
\end{proposition}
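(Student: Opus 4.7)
The plan is to mirror Waterhouse's argument for the Kummer pairing, adapted to the additive setting of Artin--Schreier theory. I would first verify well-definedness. Any two roots of $x^p - x - m$ differ by an element of $\F_p$, which $\tau \in \Gal(L/K)$ fixes pointwise, so the value $\tau(\rho(m)) - \rho(m)$ does not depend on the choice of root. If $m$ is replaced by $m + \wp(k)$ for some $k \in K$, then $\rho(m) + k$ is a valid choice of $\rho(m + \wp(k))$; since $\tau$ fixes $k$, the pairing descends to $J(K) = K/\wp(K)$. Bilinearity is then a one-line check on each side: on the $m$-side, $\rho(m_1) + \rho(m_2)$ is a root of $x^p - x - (m_1 + m_2)$; on the $\tau$-side, $\tau_2(\rho(m)) - \rho(m) \in \F_p$ is fixed by $\tau_1$, so the sum splits cleanly.

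Next, I would verify $G$-equivariance. Choose a lift $\hat \sigma \in \Gal(L/F)$ of $\sigma$. The $G$-action on $\Gal(L/K)$ is given by conjugation, $\tau^\sigma = \hat\sigma\tau\hat\sigma^{-1}$. Because $\hat\sigma(\rho(m))^p - \hat\sigma(\rho(m)) = \sigma(m)$, we may take $\rho(m^\sigma) = \hat\sigma(\rho(m))$. Then
\[
\langle \tau^\sigma, m^\sigma \rangle = \hat\sigma\tau\hat\sigma^{-1}\bigl(\hat\sigma(\rho(m))\bigr) - \hat\sigma(\rho(m)) = \hat\sigma\bigl(\tau(\rho(m)) - \rho(m)\bigr) = \langle \tau, m\rangle,
\]
where the last equality uses that the inner difference lies in $\F_p$, which $\hat\sigma$ fixes.

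Finally, for perfectness I would argue in two symmetric steps. If $\tau \in \Gal(L/K)$ pairs trivially with every $m \in M$, then $\tau$ fixes the entire generating set $\{\rho(m) : m \in M\}$ of $L$ over $K$, so $\tau = 1$. Conversely, if $m \in M$ pairs trivially with every $\tau \in \Gal(L/K)$, then $\rho(m)$ lies in the fixed field $L^{\Gal(L/K)} = K$, which forces $m = \rho(m)^p - \rho(m) \in \wp(K)$, so $m$ is trivial in $J(K)$.

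The only place demanding genuine care --- and it is hardly an obstacle --- is the $G$-equivariance step, where one must be precise about how the lift $\hat\sigma$ interacts with the chosen roots $\rho(m)$ and the passage from $m$ to $m^\sigma$. Everything else is a direct, term-for-term translation of the Kummer-theoretic proof into additive notation, which is precisely why the authors remark that the proofs ``carry over into the characteristic $p$ setting almost entirely unchanged.''
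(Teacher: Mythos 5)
The paper does not include its own proof of this proposition --- it is cited to the external preprint [BS] --- so there is no in-paper argument to compare against. That said, your proof is correct and is exactly the translation of the Kummer-pairing argument that the paper advertises when it says ``the proofs carry over into the characteristic $p$ setting almost entirely unchanged.'' Well-definedness, bilinearity, the $G$-equivariance computation via a lift $\hat\sigma$ and the observation that $\hat\sigma(\rho(m))$ is a root of $x^p - x - m^\sigma$, and the two-sided nondegeneracy argument are all sound. One cosmetic point worth flagging: you adopt the convention $\tau^\sigma = \hat\sigma\tau\hat\sigma^{-1}$, whereas the paper's Section 3 normalizes the conjugation action as $\hat\tau^{-1}\iota(a)\hat\tau = \iota(a^\tau)$; under the paper's convention the same computation yields the adjoint form $\langle \tau^\sigma, m\rangle = \langle \tau, m^\sigma\rangle$. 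Both are standard statements of $G$-equivariance and the argument is the same, but if this were being spliced into the paper one should match its convention. Also, your nondegeneracy argument gives perfectness in the finite-dimensional case (which is the case of interest throughout the paper); for $L/K$ infinite one would phrase perfectness as Pontryagin duality between the profinite group $\Gal(L/K)$ and the discrete group $M$, but the nondegeneracy you prove is precisely what is needed there too.
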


Since the module structure of $\Gal(L/K)$ determined by the module structure of the corresponding submodule $M \subseteq J(K)$, we only need to determine how to calculate the value of $\hat \sigma^{p^n}$, where $\hat \sigma \in \Gal(L/F)$ is a lift of $\sigma \in \Gal(K/F)$.  


\begin{definition*}
Let $\alpha \in J(K)$ be given so that $\ell(\alpha)<p^n$.  If $\ch{K} \neq p$ then the index of $\alpha$, written $e(\alpha)$ is defined by 
$$\xi_p^{e(\alpha)} = \root{p}\of{N_{\hat K/\hat F}(\alpha)}^{\Psi}.$$  If instead $\ch{K} = p$, then the index of $\alpha$ is defined to be $$e(\alpha) = \Psi \left(\rho\left(Tr_{K/F}(\alpha)\right)\right).$$  An element is said to have trivial index if its index is $0$.
\end{definition*}

\begin{remark*}The index can be reexpressed using the maps $N$ and $\iota$ we defined earlier.  Note first that the action of $\iota \circ N$ is equivalent to the $\F_p[G]$-action of $$1+\sigma+\cdots + \sigma^{p^n-1} \equiv \Psi^{p^n-1},$$ and hence the domain of the index function can be reexpressed: $$\{\alpha \in J(K): \ell(\alpha)<p^n\} = \ker(\iota\circ N).$$ Moreover, the index function is trivial precisely on the subset $\ker(N) \subseteq \ker(\iota \circ N)$; hence the index function is a way for detecting whether or not an element from $\ker(\iota \circ N)$ is in $\ker(N)$.
\end{remark*}

\begin{proposition}\label{prop:cyclic.group.computation}
Suppose that $\alpha \in J(K)$, and let $L$ be the extension corresponding to $\langle \alpha \rangle$.  Then 
\begin{itemize}
\item if $\ell(\alpha)=p^n$ or $e(\alpha) = 0$, then $\Gal(L/K) \simeq \F_p[G]/\langle \Psi^{\ell(\alpha)}\rangle \rtimes G$; and
\item if $\ell(\alpha)<p^n$ and $e(\alpha) \neq 0$, then $\Gal(L/K) \simeq  \F_p[G]/\langle \Psi^{\ell(\alpha)}\rangle \bullet_{\ell(\alpha)} G$. 
\end{itemize}
\end{proposition}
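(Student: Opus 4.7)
The plan is to reduce the statement to the classification of group-theoretic embedding problems with cyclic kernel (Theorem \ref{th:counting.isom.types.of.group.embedding.problems}) and then carry out one explicit field-theoretic computation to identify which extension type arises.

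First I would use the $G$-equivariance and perfectness of the Kummer pairing (in $\ch{K}\neq p$) or the Artin--Schreier pairing (in $\ch{K}=p$), combined with the self-duality of $\F_p[G]$-modules noted in the exposition above, to identify $\Gal(L/K) \simeq \langle \alpha \rangle \simeq \F_p[G]/(\sigma-1)^{\ell(\alpha)}$ as $\F_p[G]$-modules. The short exact sequence
$$\xymatrix{1 \ar[r] & \Gal(L/K) \ar[r] & \Gal(L/F) \ar[r] & G \ar[r] & 1}$$
then presents $\Gal(L/F)$ as a group-theoretic embedding problem over $G$ with kernel $M := \F_p[G]/(\sigma-1)^{\ell(\alpha)}$. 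By Theorem \ref{th:counting.isom.types.of.group.embedding.problems}, when $\ell(\alpha)=p^n$ there is only one such extension, namely $M\rtimes G$, which handles that case immediately. When $\ell(\alpha)<p^n$, there are exactly two possibilities, $M\rtimes G$ and $M\bullet_{\ell(\alpha)}G$, which are distinguished by whether a lift $\hat\sigma \in \Gal(L/F)$ of $\sigma$ satisfies $\hat\sigma^{p^n}=1$ or equals a generator of the fixed submodule $M^G\subseteq \Gal(L/K)$.

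Hence the core task is to compute $\hat\sigma^{p^n}$ for a suitable lift in terms of $\alpha$. In characteristic $\neq p$ I would pick $\beta$ with $\beta^p = \alpha$ in $\tilde K(\beta) = L(\xi_p)$, lift $\sigma$ first to $\Gal(\tilde K(\beta)/\tilde F)$ by prescribing its action on $\beta$, and then iterate: telescoping gives $\hat\sigma^{p^n}(\beta)/\beta$ in terms of $\prod_{j=0}^{p^n-1}\sigma^j(\alpha)^{1/p} = \sqrt[p]{N_{\tilde K/\tilde F}(\alpha)}$. Since this norm lies in $\tilde F^\times$, applying $\sigma-1$ to its $p$th root produces precisely $\xi_p^{e(\alpha)}$ by the very definition of $e(\alpha)$, and descending through the $t$-eigenspace condition shows that $\hat\sigma^{p^n}$ acts on $L$ as the element of $\Gal(L/K)$ pairing to $e(\alpha)$ with $\alpha$ under the Kummer pairing. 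In particular, $\hat\sigma^{p^n}$ is trivial if and only if $e(\alpha)=0$. In characteristic $p$, I would run the strictly analogous argument with $\rho(\alpha)$ in place of $\beta$: the telescoping sum yields $\hat\sigma^{p^n}(\rho(\alpha)) - \rho(\alpha) = \sum_{j=0}^{p^n-1}\rho(\sigma^j\alpha) - \rho(\alpha)$ up to a $p$th root adjustment, which simplifies to $(\sigma-1)\rho(\mathrm{Tr}_{K/F}(\alpha)) = e(\alpha)$ by the definition of the index in the additive case.

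The main obstacle is the bookkeeping with lifts and eigenspace descent in the Kummer setting: one must verify that the element of $\Gal(L/K)$ identified as $\hat\sigma^{p^n}$ is genuinely a nonzero element of $M^G$ (so it matches relation (4) of Definition \ref{def:initial.group.extension}) precisely when $e(\alpha)\neq 0$, and that this identification is independent of the auxiliary choices of $\beta$ and of the particular lift of $\sigma$. Once these compatibilities are checked, the two cases of the proposition follow at once from the dichotomy furnished by Theorem \ref{th:counting.isom.types.of.group.embedding.problems}.
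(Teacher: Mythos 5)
Your proposal follows essentially the same route as the paper's proof (which in turn adapts \cite[Prop.~2]{MSS1}): identify $\Gal(L/K)$ with $\langle\alpha\rangle$ via the perfect $G$-equivariant pairing and self-duality, reduce to the dichotomy from Theorem \ref{th:counting.isom.types.of.group.embedding.problems}, and decide which group arises by computing $\hat\sigma^{p^n}$, telescoping $(\hat\sigma^{p^n}-1)$ as $(\hat\sigma-1)(1+\hat\sigma+\cdots+\hat\sigma^{p^n-1})$ and recognizing the norm (resp.~trace) in the resulting expression to produce exactly the index $e(\alpha)$. The only wrinkle: your phrase ``up to a $p$th root adjustment'' in the characteristic-$p$ step should instead refer to the $\F_p$-ambiguity in choosing $\rho$, and one must also note that $\ell(\alpha)<p^n$ forces $\operatorname{Tr}_{K/F}(\alpha)\in\wp(K)$ so that $\rho(\operatorname{Tr}_{K/F}(\alpha))$ lies in $K$ and $\hat\sigma$ acts on it as $\sigma$; with that clarification the argument matches the paper's.
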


\begin{proof}
The result is precisely \cite[Prop.~2]{MSS1} when $\ch{K} \neq p$.  The proof in this case only relies on a $G$-equivariant, perfect pairing between $\Gal(L/K)$ and $\langle \alpha \rangle$, and since such a pairing is provided when $\ch{K} = p$ above, the result also follows.  For the sake of concreteness, though, we show the reader how one goes about verifying this identity more directly in the case $\ch{K}=p$; of course, this same idea also applies when $\ch{K}\neq p$ after minor notational changes.  

Suppose that $\ch{K}=p$ and consider $\langle \alpha \rangle \subseteq J(K)$; let $L/F$ be the corresponding extension of fields.  Recall that there is only one extension of $G$ by $\F_p[G]$, and hence if $\ell(\alpha) = p^n$ then we have $\Gal(L/K) \simeq \langle \alpha \rangle \simeq \F_p[G]$ and $\Gal(L/F) \simeq \F_p[G] \rtimes G$.  

Suppose, then, that $\ell(\alpha)<p^n$.  If $\hat \sigma \in \Gal(L/F)$ is a lift of $\sigma \in \Gal(K/F)$, then we need to determine whether or not $\hat \sigma^{p^n}$ is trivial.  Recall that $\hat \sigma^{p^n} \in \Gal(L/K)$ is in the submodule of elements fixed by the action of $\sigma$. The generator of the fixed module in $\Gal(L/K)$ is dual to the element $\alpha$, and so we simply need to know whether $\hat \sigma^{p^n}$ acts trivially on $\rho(\alpha)$ or not; i.e., we need to compute $\langle \hat \sigma^{p^n},\alpha\rangle$.  Since $(\sum_{i=1}^{p^n} \hat \sigma^i)\rho(\alpha)$ and $\rho(Tr_{K/F}(\alpha))$ are both roots for the same polynomial, they have the same image under $\hat \sigma-1$; hence we have
\begin{equation*}\begin{split}
\langle \hat \sigma^{p^n},\alpha\rangle &= \hat \sigma^{p^n}\left(\rho(\alpha)\right) - \rho(\alpha)\\
&=\left(\hat\sigma^{p^n}-1\right)\rho(\alpha)\\
&=(\hat \sigma-1) \left(\sum_{i=0}^{p^n-1} \hat \sigma^i\right) \rho(\alpha)\\
&= (\hat \sigma-1) \left(\rho(Tr_{K/F}(\alpha)\right).
\end{split}\end{equation*}  Since $\ell(\alpha)<p^n$ it follows that $Tr_{K/F}(\alpha) = \Psi^{p^n-1}\alpha \in \wp(K)$, and so $\rho(Tr_{K/F}(\alpha)) \in K$.  Hence the action of $\hat \sigma$ on this element is identical to the action of $\sigma$, and so $$\langle \hat \sigma^{p^n},\alpha \rangle = \Psi \left(\rho(Tr_{K/F}(\alpha))\right) = e(\alpha).$$ Hence $\hat \sigma^{p^n}$ is trivial if and only if $e(\alpha) = 0$.
\end{proof}




\subsection{Moving beyond cyclic modules}
Now that we have a description of Galois groups that arise from cyclic submodules of $J(K)$, we can determine the Galois structure of a generic extension of $G$ by a finite $\F_p[G]$-module in terms of its module structure and the index.  

\begin{definition}
Suppose that $M \subseteq J(K)$.  If there exists an element $m \in M$ with $\ell(m)<p^n$ such that $e(m) \neq 0$, then we define $$\lambda(M)=\min_{m \in M} \{\ell(m):  \ell(m)<p^n \mbox{ and } e(m) \neq 0\}.$$ Otherwise, we define $\lambda(M) = p^n$.  
\end{definition}

\begin{remark*}
Our definition of $\lambda(M)$ is written piecewise, but it is possible to combine these two cases into the single expression 
$$\lambda(M) = \inf_{m \in M} \{\ell(m): m \in \ker(\iota \circ N)\setminus \ker(N)\}.$$
This follows because we can replace the length and index assumptions with appropriate statements concerning $\ker(\iota\circ N)$ and $\ker(N)$, and because the natural interpretation for $\inf(\emptyset)$ in this context is $\inf(\emptyset) = p^n$ (since the quantities of interest are dimensions of cyclic submodules, and hence values between $0$ and $p^n$).
\end{remark*}

We are now prepared to prove the main result of this paper.


\begin{proof}[Proof of Theorem \ref{th:main.theorem}]
Let $M \subseteq J(K)$ be a given $\F_p[G]$-module, and $L/F$ its corresponding extension.  We have already seen that $\Gal(L/F)$ can be viewed as an extension of $\Gal(K/F)$ by $M$, and so Theorem \ref{th:counting.isom.types.of.group.embedding.problems} tells us that  $\Gal(L/F)$ is isomorphic to $\xymatrix{M \bullet_\mu G \ar@{->>}[r]& G}$ for some $1 \leq \mu \leq p^n$.  Our goal is to show that $\mu = \lambda(M)$.  Once we have done this, Proposition \ref{prop:embedding.problems.with.different.kernels.are.different} tells us that this is the only embedding problem that $L$ solves over $K/F$.

First, if $\lambda(M) = p^n$, then there are no elements of length less than $p^n$ with nontrivial index in $M$.  We claim then that $\Gal(L/F) \simeq M \rtimes G$.  To see this is true, let $M = \oplus \langle \alpha_i \rangle$, and define $L_i$ to be the field corresponding to $\langle \alpha_i \rangle$; let $\tau_i \in \Gal(L/F)$ be chosen so that $\tau_i$ restricts to the trivial automorphism on all extensions $L_j/K$ for $j \neq i$, and which restricts to an $\F_p[G]$-generator of $\Gal(L_i/K)$. By Proposition \ref{prop:cyclic.group.computation} we know that $\Gal(L_i/F) \simeq \langle \overline{\tau_i} \rangle \rtimes G$ since either $e(\alpha_i) = 0$ or $\ell(\alpha_i) = p^n$. We also have a surjection of embedding problems that comes from Galois theory, and which corresponds to quotienting by the subgroup $\langle \{\tau_j\}_{j \neq i} \rangle$:
$$\xymatrix{
M \bullet_\mu G \ar@{->>}[d]_\psi \ar@{->>}[r] & G \ar@{=}[d]\\
\langle \overline{\tau_i} \rangle \rtimes G \ar@{->>}[r] & G.
}$$
Now if $\hat \sigma \in M \bullet_\mu G$ is a lift of $\sigma$, then $\psi(\hat \sigma) \in \langle \overline{\tau_i} \rangle  \rtimes G$ is a lift of $\sigma$, and we know that $\psi(\hat \sigma)^{p^n} = 0$ if $\ell(\alpha_i) < p^n$.  Hence we have
$$\hat \sigma^{p^n} = \sum_{\ell(\alpha_i) = p^n} {c_i \Psi^{p^n-1}}  \tau_i,$$ and it follows that $\Gal(L/F) \simeq M \rtimes G$.

Suppose, then, that $\lambda(M)<p^n$.  We begin by choosing a decomposition $M = \oplus_{i=1}^{\rk(M)} \langle \alpha_i \rangle$ satisfying
$$\left\{\begin{array}{l}e(\alpha_1) \neq 0 \mbox{ and }\ell(\alpha_1) = \lambda(M)\\e(\alpha_i)=0 \mbox{ for all }i>1 \mbox{ with }\ell(\alpha_i)<p^n.\end{array}\right.$$  To see this is possible, note that if we have any decomposition $M = \oplus_{i=1}^{\rk(M)} \langle \beta_i \rangle$, then we choose $j$ such that $\ell(\beta_j)$ is minimal amongst all elements with $e(\beta_j) \neq 0$.   For convenience we can assume that $j=1$ and also that $e(\beta_1) = 1$.  We then define $\alpha_i = \beta_i$ if either $i=1$ or $\ell(\beta_i) = p^n$, and for $i>1$ with $\ell(\beta_i)<p^n$ we set $\alpha_i = {-e(\beta_i)} \cdot \beta_1 + \beta_i$.  It is easy to check that $\oplus \langle \alpha_i \rangle = \oplus \langle \beta_i \rangle$, and that $\lambda(M) = \ell(\alpha_1)$.  

Let $L_i$ be the extension corresponding to $\langle \alpha_i \rangle$; as before, let $\tau_i \in \Gal(L/F)$ be chosen so that $\tau_i$ restricts to the trivial automorphism on all extensions $L_j/K$ for $j \neq i$, and which restricts to an $\F_p[G]$-generator of $\Gal(L_i/K)$.  We know that $\Gal(L_i/K)$ solves the embedding problem $\langle \overline{\tau_i} \rangle \bullet_{\mu_i} G$ over $K/F$, with $\mu_1 = \lambda(M)$ and $\mu_i = p^n$ for $i>1$.  For each $1 \leq i \leq \rk(M)$ we have a surjection of embedding problems that arises by quotienting by the subgroup generated by $\langle\{\tau_j\}_{j \neq i}\rangle$:
$$\xymatrix{
M \bullet_\mu G \ar@{->>}[r] \ar@{->>}[d]_{\psi_i} & G \ar@{=}[d]\\
\langle \overline{\tau_i} \rangle \bullet_{\mu_i} G \ar@{->>}[r] & G.
}$$
If $\hat \sigma \in M \bullet_\mu G$ is a lift of $\sigma$, then $\psi_i(\hat \sigma) \in \Gal(L_i/F)$ is a lift for $\sigma$.  Because we know the group structure of $\Gal(L_i/K)$, we can say that
$$\psi_i(\hat \sigma)^{p^n} = \left\{ \begin{array}{ll} {c_1\Psi^{\ell(\alpha_1)-1}} \tau_1,&\mbox{ if }i =1\\0,&\mbox{ if }i>1\mbox{ and }\ell(\alpha_i)<p^n\\{c_i\Psi^{p^n-1}}\tau_i,&\mbox{ if }\ell(\alpha_i) = p^n.\end{array}\right.$$ for some $c_1 \in \F_p^\times$ and $c_i \in \F_p$.  Hence we have
$$\hat \sigma^{p^n} = {c_1\Psi^{\ell(\alpha_1)-1}}\tau_1+\sum_{\ell(\alpha_i)=p^n} {c_i\Psi^{p^n-1}} \tau_i,$$ and it follows that $\Gal(L/F) \simeq M \bullet_{\lambda(M)} G$, as desired.
\end{proof}

In the remaining sections of this paper, we glean as much from this parameterization as we can.  In the next section we focus on enumerating solutions to embedding problems, and in the final section we determine certain restrictions on the absolute Galois group of $F$ that this parameterization imposes.

\section{Counting solutions to one embedding problem within another}\label{sec:counting}

Our main theorem has the following obvious

\begin{corollary}
For a finite $\F_p[G]$-module $M$, the number of solutions to an embedding problem $\xymatrix{M \bullet_\mu G \ar@{->>}[r] & G}$ over $K/F$ is equal to the number of modules $U \subseteq J(K)$ satisfying $U \simeq M$ and $\lambda(U) = \mu$.
\end{corollary}

In this section we make this corollary effective, in the sense that we use linear algebra to give explicit formulae for calculating the number of such solutions. Since it requires no more work and ensures finiteness of the associated modules, we will actually answer this question in a slightly different setting: for a given $A \subseteq J(K)$ corresponding to a finite extension $E/F$ solving $\xymatrix{A \bullet_\lambda G \ar@{->>}[r]& G}$, we will count the number of fields $L$ within $E/K$ that solve the embedding problem $\xymatrix{M \bullet_\mu G \ar@{->>}[r]& G}$ over $K/F$.  In other words, we count the number of solutions to a given embedding problem inside another.  Instead of simply looking for submodules $U \subseteq J(K)$ with $U \simeq M$ and $\lambda(U) = \mu$, in this case we need to satisfy the additional condition that $U \subseteq A$.  Though we don't always say so explicitly, in this section we always assume $A$ and $M$ are finite.

To begin, note that if $\mu < \lambda$ then no such submodule $M$ exists since $A$ contains no elements of length $\mu$ with non-trivial index. So we will only consider those cases where $\mu \geq \lambda$.  

Our strategy for counting the number of submodules of a particular type is to provide a recipe for building them from the ground up.  By this we mean that we provide a way for counting the total number of possibilities for the fixed part of such a submodule, then we count the number of ways to construct a module with the desired properties and a given fixed part.  

Before counting these submodules explicitly, we offer some preparatory lemmas.  In particular we will need to know the number of ways that one filtration of $\F_p$-spaces can be embedded into another.  Towards that end, suppose that $V$ and $W$ are finite $\F_p$-spaces, and that we have two filtrations
$F_V$ and $F_W$ of the form
\begin{equation}\label{eq:generic.filtration}
\begin{split}
F_V:\quad V &= V_1 \supseteq V_2 \supseteq \cdots \supseteq V_{p^n} \supseteq V_{p^n+1} = \{0\}\\
F_W:\quad W &= W_1 \supseteq W_2 \supseteq \cdots \supseteq W_{p^n} \supseteq W_{p^n+1} =\{0\}.
\end{split}\end{equation} We will say that $F_W \subseteq F_V$ if for each $i$ we have $W_i \subseteq V_i$, and that $F_{W'} \simeq F_W$ if $\dim(W_i')=\dim(W_i)$ for all $i$.  
In the case that $\dim(V)=n$ and $\dim(W)=m$, it is well known that the number of $\F_p$-subspaces isomorphic to $W$ within $V$ is given by
$$
\binom{n}{m}_p = 
\left\{\begin{array}{ll}
0, &\mbox{ if }m<0 \mbox{ or }m>n\\ 
\frac{(p^n-1)\cdots(p^{n-m+1}-1)}{(p^m-1)\cdots (p-1)},&\mbox{ if }0\leq m \leq n.\end{array}\right.
$$
The next lemma gives us the generalization of this result to the filtrations $F_V$ and $F_W$.

\begin{lemma}\label{le:flag.counter}
Suppose we have filtrations $F_V$ and $F_W$ as in Equation (\ref{eq:generic.filtration}). Then the number of filtrations isomorphic to $F_W$ within $F_V$ is $$\binom{F_V}{F_W} := \prod_{i=1}^{p^n} \binom{\dim(V_i) - \dim(W_{i+1})}{\dim(W_i)-\dim(W_{i+1})}_p.$$
\end{lemma}

\begin{proof}
First, observe that for an isomorphic copy of $W_i$ to be contained in $V_{i}$, we must have $\dim(W_i) \leq \dim(V_{i})$.  Hence $\dim(W_i) > \dim(V_{i})$ implies there is no filtration of subspaces of $V$ isomorphic to $F_W$.    In this case, we also have that the corresponding $p$-binomial coefficient $\left(\begin{array}{c}\dim(V_{i}) - \dim(W_{i+1})\\ \dim(W_i)- \dim(W_{i+1})\end{array}\right)_p = 0$, and so the identity holds.  

Suppose, then, that $\dim(W_i) \leq \dim(V_{i})$ for all $1\leq i\leq p^n$.  The number of $\F_p$-subspaces of $V_{p^n}$ isomorphic to $W_{p^n}$ is $\left(\begin{array}{c} \dim(V_{p^n}) \\ \dim(W_{p^n}) \end{array}\right)_p$.  Now suppose we have shown that the number of filtrations isomorphic to $W_\ell \supseteq \cdots \supseteq W_{p^n}$ within $V_\ell \supseteq \cdots \supseteq V_{p^n}$ is $$\prod_{i \geq \ell} \left(\begin{array}{c}\dim(V_{i}) -\dim(W_{i+1})\\\dim(W_i)-\dim(W_{i+1})\end{array}\right)_p.$$ Let $U_\ell \supseteq \cdots \supseteq U_{p^n}$ be one such filtration, and suppose we have $\F_p$-independent collections $\mathcal{I}_i$ such that $\cup_{i \geq k}\mathcal{I}_i$ is a basis for $U_k$ for each $k \geq \ell$.

The number of ways to choose a collection $\mathcal{I}_{\ell-1} \subseteq V_{\ell-1}$ of $\dim(W_{\ell-1})-\dim(W_\ell)$ elements so that $\cup_{i \geq \ell-1} \mathcal{I}_i$ is linearly independent is 
$$(p^{\dim(V_{\ell-1})}-p^{\dim(W_\ell)})(p^{\dim(V_{\ell-1})}-p^{\dim(W_\ell)+1})\cdots (p^{\dim(V_{\ell-1})}-p^{\dim(W_{\ell-1})-1}).$$   Repeating the same argument above, for a given $U_{\ell-1}$ there are $$(p^{\dim(W_{\ell-1})}-p^{\dim(W_\ell)})(p^{\dim(W_{\ell-1})}-p^{\dim(W_\ell)+1})\cdots (p^{\dim(W_{\ell-1})}-p^{\dim(W_{\ell-1})-1})$$ many choices for a set $\mathcal{I}_{\ell-1} \subseteq U_{\ell-1}$ which complete $\cup_{i \geq \ell} \mathcal{I}_i$ to a basis of $U_{\ell-1}$.  Hence the total number of ways to choose $U_{\ell-1}$ is 
\begin{equation*}
\begin{split}
&\frac {(p^{\dim(V_{\ell-1})}-p^{\dim(W_\ell)})}{(p^{\dim(W_{\ell-1})}-p^{\dim(W_\ell)})}
\frac{(p^{\dim(V_{\ell-1})}-p^{\dim(W_\ell)+1})}{(p^{\dim(W_{\ell-1})}-p^{\dim(W_\ell)+1})}
\cdots 
\frac{(p^{\dim(V_{\ell-1})}-p^{\dim(W_{\ell-1})-1})}{(p^{\dim(W_{\ell-1})}-p^{\dim(W_{\ell-1})-1})}
\\ &= 
\frac{(p^{\dim(V_{\ell-1})-\dim(W_\ell)}-1)}{(p^{\dim(W_{\ell-1})-\dim(W_\ell)}-1)}
\frac{(p^{\dim(V_{\ell-1})-\dim(W_\ell)-1}-1)}{(p^{\dim(W_{\ell-1})-\dim(W_\ell)-1}-1)}
\cdots
\frac{(p^{\dim(V_{\ell-1})-\dim(W_{\ell-1})+1}-1)}{(p-1)}
\\&= \left(\begin{array}{c}\dim(V_{\ell-1})-\dim(W_\ell)\\ \dim(W_{\ell-1})-\dim(W_\ell)\end{array}\right)_p.
\end{split}
\end{equation*}
\end{proof}

For $1 \leq \ell \leq p^n+1$, recall that we define $M_{\{\ell\}} = \im\left(\xymatrix{M \ar[r]^-{\Psi^{\ell-1}}& M}\right) \cap M^{G}$, and that these spaces give rise to the length filtration on $M$: $$F^{\tiny{\mbox{len}}}_M:\quad M^{G} = M_{\{1\}} \supseteq M_{\{2\}} \supseteq \cdots \supseteq M_{\{p^n\}} \supseteq M_{\{p^n+1\}} = \{1\}.$$  This filtration is essential for identifying and counting the appearance of modules isomorphic to $M$ within $A$, as shown in the next few lemmas.  

\begin{lemma}\label{le:identifying.modules.by.bottom}
Suppose $A \subseteq J(K)$ and that $F_W$ is a filtration $$W=W_1 \supseteq W_2 \supseteq \cdots \supseteq W_{p^n} \supseteq W_{p^n+1} = \{0\}.$$ Then $F_W \subseteq F^{\tiny{\mbox{len}}}_A$ and $F_W \simeq F^{\tiny{\mbox{len}}}_M$ if and only if there exists $U \subseteq A$ satisfying $U \simeq M$ and $F_W=F^{\tiny{\mbox{len}}}_U$.  
\end{lemma}

\begin{proof}
One direction is easy.  For the other, choose collections $\mathcal{I}_\ell$ such that $\cup_{j \geq \ell} \mathcal{I}_j$ is a basis for $W_\ell$, and let $\mathcal{I} = \cup_\ell \mathcal{I}_\ell$.  For each $x \in \mathcal{I}_\ell$, select an element $\alpha_x \in A$ such that $x = \Psi^{\ell-1} \alpha_x$; such a selection is possible since $x \in A_{\{\ell\}}$. Proposition \ref{prop:build.a.module} then tells us that $\oplus_{x \in \mathcal{I}} \langle \alpha_x \rangle \simeq M$.  
\end{proof}

The proof of the previous lemma tells us how to construct a module $U \simeq M$ with $F_W = F^{\mbox{\tiny{len}}}_U$.  The next lemma tells us how to determine when  modules constructed in this way are identical.

\begin{lemma}\label{le:module.differentiator}
Suppose $\mathcal{I} \subseteq A^{G}$ is a collection of $\F_p$-independent elements. For each $x \in \mathcal{I}$, let $1 \leq \ell_x \leq p^n$ be given so that $x \in A_{\{\ell_x\}}$, and suppose that $\alpha_x, \beta_x \in A$ satisfy $${\Psi^{\ell_x-1}}\alpha_x = x = {\Psi^{\ell_x-1}}\beta_x.$$  Then $\bigoplus_{x \in \mathcal{I}} \langle \alpha_x \rangle = \bigoplus_{x \in \mathcal{I}} \langle \beta_x \rangle$ if and only if for every $y \in \mathcal{I}$ we have $\beta_y \in \bigoplus_{x \in \mathcal{I}} \langle \alpha_x \rangle.$
\end{lemma}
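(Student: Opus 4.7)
The forward implication is immediate: if $\bigoplus_{x \in \mathcal{I}} \langle \alpha_x \rangle$ and $\bigoplus_{x \in \mathcal{I}} \langle \beta_x \rangle$ coincide as $\F_p[G]$-submodules, then every $\beta_y$ lies in $\bigoplus_{x \in \mathcal{I}} \langle \alpha_x \rangle$. So the plan focuses on the reverse implication.

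The first step is to pin down the lengths of $\alpha_x$ and $\beta_x$. Applying $(\sigma-1)^k$ to a nontrivial element either annihilates it or drops its length by exactly $k$, so the hypothesis $\alpha_x^{(\sigma-1)^{\ell_x - 1}} = x$, combined with the fact that $x \in V^{G}$ is a nontrivial fixed element (hence of length $1$), forces $\ell(\alpha_x) = \ell_x$; the identical calculation gives $\ell(\beta_x) = \ell_x$. The second step is to verify that $\bigoplus_{x \in \mathcal{I}} \langle \alpha_x \rangle$ and $\bigoplus_{x \in \mathcal{I}} \langle \beta_x \rangle$ are honest internal direct sums of $\F_p[G]$-submodules of $V$. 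This is precisely the situation already handled in the proof of Proposition \ref{prop:build.a.module}: since the generators of these cyclic submodules project, via the appropriate power of $\sigma-1$, onto the $\F_p$-independent set $\mathcal{I} \subseteq V^{G}$, the $\F_p[G]$-independence of the cyclic summands follows from \cite[Lm.~2]{MSS1}. Combining the two steps, each of the two sums has $\F_p$-dimension $\sum_{x \in \mathcal{I}} \ell_x$.

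The final step assembles the conclusion. The assumption that $\beta_y \in \bigoplus_{x \in \mathcal{I}} \langle \alpha_x \rangle$ for every $y \in \mathcal{I}$, together with the fact that the right-hand side is an $\F_p[G]$-submodule, immediately gives the containment $\bigoplus_{x \in \mathcal{I}} \langle \beta_x \rangle \subseteq \bigoplus_{x \in \mathcal{I}} \langle \alpha_x \rangle$; since both sides share the same finite $\F_p$-dimension $\sum_{x \in \mathcal{I}} \ell_x$, they must be equal. The one point that requires care is the length identity $\ell(\alpha_x) = \ell_x = \ell(\beta_x)$: without it the two dimensions need not match and the dimension count collapses. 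Fortunately that identity is built into the setup, as noted above.
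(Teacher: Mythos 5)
Your proof is correct for the case that is actually needed in the paper, but it takes a genuinely different route from the author's. The paper proves the nontrivial containment $\bigoplus \langle \alpha_x \rangle \subseteq \bigoplus \langle \beta_x \rangle$ directly, by induction on length: given $\alpha$ of length $\ell$, one applies $(\sigma-1)^{\ell-1}$, matches coefficients against the $\F_p$-independent set $\mathcal{I}$, divides off the corresponding product of $\beta_x$'s to drop the length, and closes the induction using the hypothesis that each $\beta_y$ already lies in $\bigoplus \langle \alpha_x \rangle$. You instead observe that the hypothesis trivially yields the \emph{opposite} containment $\bigoplus \langle \beta_x \rangle \subseteq \bigoplus \langle \alpha_x \rangle$ (a submodule containing each $\beta_y$ contains each $\langle \beta_y \rangle$), and then upgrade this to equality by a dimension count, using $\ell(\alpha_x) = \ell_x = \ell(\beta_x)$ and the directness supplied by \cite[Lm.~2]{MSS1} to see both sides have $\F_p$-dimension $\sum_{x \in \mathcal{I}} \ell_x$. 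This is shorter and arguably cleaner. The one caveat is that your dimension count silently assumes $\mathcal{I}$ is finite; the lemma's hypotheses do not literally require this, and the paper's length-induction argument is indifferent to it. That said, in every place the paper invokes this lemma, $\mathcal{I}$ arises as a basis of a fixed space of a finite $\F_p[G]$-module (isomorphic to the given finite $M$), so the restriction is harmless in context --- but if you want the lemma exactly as stated, you should either add a finiteness hypothesis or fall back to the inductive argument.
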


\begin{proof}
Certainly one direction is trivial.  For the other, we show that $\oplus \langle \alpha_x \rangle \subseteq \oplus \langle \beta_x \rangle$ by induction on length.  The elements of length $1$ in both modules are simply the $\F_p$-span of the collection $\mathcal{I}$.  Suppose we know that any element of length $\ell-1$ within $\oplus \langle \alpha_x \rangle$ is contained in $\oplus \langle \beta_x \rangle$, and let $\alpha \in \oplus \langle \alpha_i \rangle$ be given so that $\ell(\alpha) = \ell$.  Hence 
$${\Psi^{\ell-1}}\alpha = \sum_{\ell_x \geq \ell} {c_x}x ={\Psi^{\ell-1}} \left(\sum_{\ell_x \geq \ell} {c_x\Psi^{\ell_x-\ell}}\beta_x\right).$$ It follows that $\hat\alpha:=\alpha-\sum{c_x\Psi^{\ell_x-\ell}} \beta_x$ has length less than $\ell$, and since each $\beta_y \in \oplus \langle \alpha_x \rangle$ it also follows that $\hat\alpha \in \oplus \langle \alpha_x \rangle$.  By induction, $\hat\alpha \in \oplus \langle \beta_x \rangle$, and hence so too $\alpha \in \oplus \langle \beta_x \rangle$.
\end{proof}

We now give a result that uses the length filtration to count the number of submodules of $A$ that are isomorphic to $M$.  For this and several subsequent results, we will adopt the following notation for a pair of filtrations $F_W$ and $F_V$ as in (\ref{eq:generic.filtration}):
$$\Omega(F_W,F_V) := \prod_{\ell=1}^{p^n} \left(p^{\sum_{j<\ell} \dim\left(V_j\right)-\dim\left(W_j\right)}\right)^{\codim\left(W_{\ell+1},W_\ell\right)}$$

\begin{theorem}\label{th:counting.M.inside.A}
The number of $U \subseteq A$ satisfying $U \simeq M$ is
$$\binom{F^{\mbox{\tiny{len}}}_A}{F^{\mbox{\tiny{len}}}_M}\Omega(F^{\mbox{\tiny{len}}}_M,F^{\mbox{\tiny{len}}}_A).$$
\end{theorem}

\begin{proof}
Lemma \ref{le:identifying.modules.by.bottom} characterizes the filtrations that are the fixed part of a submodule $U \subseteq A$ with $U \simeq M$.  By Lemma \ref{le:flag.counter}, the number of such filtrations is $\binom{F^{\mbox{\tiny{len}}}_A}{F^{\mbox{\tiny{len}}}_M}$.

Now suppose that $F_W$ is a filtration that satisfies the conditions of Lemma \ref{le:identifying.modules.by.bottom}, and we will count the number of $U\subseteq A$ such that $F^{\mbox{\tiny{len}}}_U = F_W$.  As in the proof of Lemma \ref{le:identifying.modules.by.bottom}, let $\{\mathcal{I}_\ell\}$ be chosen so that $\cup_{i \geq \ell} \mathcal{I}_i$ is a basis for $W_\ell$, and let $\mathcal{I} = \cup_\ell \mathcal{I}_\ell$.  We start by counting the number of sets $\{\alpha_x\}_{x \in \mathcal{I}}$ such that for any $x \in \mathcal{I}_\ell$ we have $x = \Psi^{\ell-1} \alpha_x$.  

Suppose that $\{\alpha_x\}$ is one such set, and let $\{\hat \alpha_x\}$ be another.  Then for each $x \in \mathcal{I}_\ell$ we have $\hat \alpha_x = g_x + \alpha_x$ for some $g_x \in A$ with $\ell(g_x)<\ell$.  Conversely, any choice of $\{g_x\} \subseteq A$ satisfying $\ell(g_x)<\ell$ for all $x \in \mathcal{I}_\ell$ gives rise to a set $\{\hat\alpha_x\}$ satisfying the necessary conditions.  The total number of choices for a given $g_x$ is the number of elements in $A$ with length less than $\ell$, which is $p^{\sum_{j<\ell}\dim\left(A_{\{j\}}\right)}.$  Hence the number of sets $\{\alpha_x\}$ is 
$$\prod_{\ell=1}^{p^n} \left(p^{\sum_{j<\ell} \dim\left(A_{\{j\}}\right)}\right)^{\codim\left(M_{\{\ell+1\}},M_{\{\ell\}}\right)}.$$

On the other hand, by Lemma \ref{le:module.differentiator} two sets $\{\alpha_x\}$ and $\{\hat \alpha_x\}$ satisfy $\oplus\langle \alpha_x \rangle = \oplus \langle \hat \alpha_x \rangle$ if and only if for each $y \in \mathcal{I}$ we have $g_y \in \oplus \langle \alpha_x \rangle$.  This occurs if and only if for each $y \in \mathcal{I}_\ell$ we have that $g_y$ is an element of $\oplus \langle \alpha_x \rangle \simeq M$ of length less than $\ell$, of which there are $p^{\sum_{j<\ell} \dim\left(M_{\{j\}}\right)}$ many choices.  Hence we have overcounted by a factor of 
$$\prod_{\ell=1}^{p^n} \left(p^{\sum_{j<\ell} \dim\left(M_{\{j\}}\right)}\right)^{\codim\left(M_{\{\ell+1\}},M_{\{\ell\}}\right)}.$$  Therefore the total number of such modules is the quotient of these two quantities, which is $\Omega\left(F^{\mbox{\tiny{len}}}_A,F^{\mbox{\tiny{len}}}_M\right)$.
\end{proof}

\begin{corollary}\label{cor:count.for.split.inside.split.case}
Suppose that $E/F$ corresponds to $A \subseteq J(K)$ with $\lambda:=\lambda(A)=p^n$. Then the number of solutions to the embedding problem $\xymatrix{M \bullet_{p^n} G \ar@{->>}[r] &G}$ over $K/F$ and within $E/F$ is $$\binom{F^{\mbox{\tiny{len}}}_A}{F^{\mbox{\tiny{len}}}_M} \Omega\left(F^{\mbox{\tiny{len}}}_M,F^{\mbox{\tiny{len}}}_A\right).$$
\end{corollary}

\begin{proof}
We know that solutions to the embedding problem $\xymatrix{M \bullet_{p^n} G \ar@{->>}[r] & G}$ over $K/F$ and within $E/F$ correspond to submodules $U \subseteq A$ with $U \simeq M$ and $\lambda(U) = p^n$.  Note that since $U \subseteq A$ and $\lambda(A)=p^n$, the condition $\lambda(U)=p^n$ is automatic.  Hence solutions to the relevant embedding problem correspond to submodules $U \subseteq A$ with $U \simeq M$.
\end{proof}

\begin{example}\label{ex:count.when.mu.equals.lambda.equals.p.to.the.n}
Here we do a specific example to illustrate the numerics.  Suppose that $p=5$ and $n=2$. Let $M \simeq \bigoplus_{\ell=1}^{25} \oplus_{d_\ell} \F_p[G]/\langle \Psi^\ell \rangle$ where $d_1=2, d_3=1$ and $d_{25}=3$, with all other $d_\ell=0$.  Suppose that $A \simeq \bigoplus_{\ell=1}^{25} \oplus_{e_\ell} \F_p[G]/\langle \Psi^\ell\rangle$ where $e_1 = 2$, $e_{24}=4$ and $e_{25}=5$, with all other $e_\ell=0$.  Hence the group $A \bullet_{p^n} G \simeq A \rtimes G$ has order $5^{225}$ and $M \bullet_{p^n} G \simeq M \rtimes G$ has order $5^{82}$.

We are interested in finding solutions to the embedding problem $\xymatrix{M \bullet_{p^n} G \ar@{->>}[r]& G}$ over $K/F$ that are within a given extension $E/F$ that solves the embedding problem $\xymatrix{A \bullet_{p^n} G \ar@{->>}[r] & G}$. By the previous result the essential quantities necessary to perform this calculation are the dimensions from the filtrations $F^{\mbox{\tiny{len}}}_M$ and $F^{\mbox{\tiny{len}}}_M$.  One has
$$\dim(M_{\{\ell\}}) = \left\{\begin{array}{ll}
3,&\mbox{ if }4 \leq \ell \leq 25\\
4,&\mbox{ if }2 \leq \ell \leq 3\\
6,&\mbox{ if }\ell=1\\
\end{array}\right\},
\quad 
\dim(A_{\{\ell\}}) = \left\{\begin{array}{ll}
5,&\mbox{ if }\ell=25\\
9,&\mbox{ if }2 \leq \ell \leq 24\\
11,&\mbox{ if }\ell=1\\
\end{array}\right\}.
$$

We first calculate $\binom{F^{\mbox{\tiny{len}}}_A}{F^{\mbox{\tiny{len}}}_M}$.  Since $\codim(M_{\{\ell\}},M_{\{\ell+1\}})=0$ for all $4 \leq \ell \leq 24$ and $\ell=2$, we only have to consider the terms coming from $\ell\in\{1,3,25\}$.   We therefore have $$\binom{F^{\mbox{\tiny{len}}}_A}{F^{\mbox{\tiny{len}}}_M} = 
\binom{5}{3}_5 \binom{9-3}{4-3}_5 \binom{11-4}{6-4}_5 = 1008467924179716.
$$
To calculate $\Omega\left(F^{\mbox{\tiny{len}}}_M,F^{\mbox{\tiny{len}}}_A\right)$, note that the only terms that are nontrivial correspond to $\ell\in \{3,25\}$: the exponent is trivial when $\ell=1$ and $\codim(M_{\{\ell\}},M_{\{\ell+1\}})=0$ for other $\ell$.  Hence we have
\begin{equation*}\begin{split}
\Omega\left(F^{\mbox{\tiny{len}}}_M,F^{\mbox{\tiny{len}}}_A\right)&=\left(p^{\sum_{j<3}\codim\left(M_{\{j\}},A_{\{j\}}\right)}\right)^{\codim\left(M_{\{4\}},M_{\{3\}}\right)} \left(p^{\sum_{j<25} \codim(M_{\{j\}},A_{\{j\}})}\right)^{\dim\left(M_{\{25\}}\right)} \\&= 5^{10}5^{3(141)} = 5^{433}.
\end{split}\end{equation*}
So the total number of solutions to the embedding problem $M \bullet_{p^n} G \twoheadrightarrow G$ over $K/F$ within a given solution to $A \bullet_{p^n} G \twoheadrightarrow G$ over $K/F$ is
$$1008467924179716\cdot 5^{433} \approx 10^{317}.$$
\end{example}

Unfortunately these tools aren't quite sufficient for enumerating solutions to an embedding problem $\xymatrix{M \bullet_\mu G \ar@{->>}[r] & G}$ within a solution to another embedding problem $\xymatrix{A\bullet_\lambda G \ar@{->>}[r]&G}$ if $\lambda<p^n$, since in this case we'll have to be careful that the modules we collect have the appropriate index.  Since controlling elements of nontrivial index is critical for our analysis, we will need to do more.  

\begin{definition}
For $A \subseteq J(K)$, we define
$$A^0 = \{\alpha \in A: \ell(\alpha) < p^n \mbox{ and } e(\alpha) = 0\} = \ker(N) \cap A.$$ 
We also define $A^0_{\{\ell\}}=\im\left(\xymatrix{A^0 \ar[r]^{\Psi^{\ell-1}}&A^0}\right) \cap (A^0)^G$; in particular, this means that $A^0_{\{p^n\}} = \{1\}$, since all elements of $A^0$ have length at most $p^n-1$.
\end{definition}

\begin{remark*}
Notice that $A^0_{\{\ell\}}$ is \emph{not} $\left(A_{\{\ell\}}\right)^0=A_{\{\ell\}} \cap \ker(e)$.  
\end{remark*}

\begin{lemma}\label{le:trivial.module.fixed.dims}
For $A \subseteq J(K)$, if $\ell<p^n$ then $\codim\left(A^0_{\{\ell\}},A_{\{\ell\}}\right) = \mathds{1}_{\ell=\lambda(A)}$.
\end{lemma}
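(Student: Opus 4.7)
The plan is to compare $A_{\{i\}}$ and $A^0_{\{i\}}$ by identifying both with quotients of the same submodule of $A$ and tracking the discrepancy via the index map. First, I would confirm that $A^0$ is genuinely an $\F_p[G]$-submodule of $A$. The condition $\ell(\alpha) < p^n$ cuts out a submodule since length is subadditive under products and invariant under the $G$-action, and in both characteristic cases the index is an additive, $G$-invariant functional with values in the trivial $G$-module $\F_p$ (this is immediate from the additivity of norm/trace, combined with the fact that any two roots of $x^p - x - k$ differ by an element of $\F_p$, and similarly for $p$th roots). Because $\F_p$ has trivial $G$-action, one obtains $e \circ (\sigma-1) = 0$ on the domain of $e$, so $e$ vanishes on every $(\sigma-1)$-image of length less than $p^n$.

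Writing $A_{[i]} := \{\alpha \in A : \ell(\alpha) \leq i\} = \ker\bigl((\sigma-1)^i|_A\bigr)$, for $i < p^n$ the map $\phi_i \colon A_{[i]} \twoheadrightarrow A_{\{i\}}$ sending $\alpha \mapsto \alpha^{(\sigma-1)^{i-1}}$ is surjective with kernel $A_{[i-1]}$, so $A_{\{i\}} \simeq A_{[i]}/A_{[i-1]}$. Every element of $A_{[i]}$ has length at most $i < p^n$, so the index is defined on all of $A_{[i]}$. The key step is to identify the preimage of $A^0_{\{i\}}$ under $\phi_i$: a class $\alpha^{(\sigma-1)^{i-1}}$ lifts to some element of $A^0 \cap A_{[i]}$ precisely when there exists $\gamma \in A_{[i-1]}$ with $e(\alpha\gamma) = 0$, i.e., when $e(\alpha) \in e(A_{[i-1]})$. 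Hence the preimage equals $\ker(e|_{A_{[i]}}) \cdot A_{[i-1]}$, and passing to the quotient yields
$$A_{\{i\}}\big/A^0_{\{i\}} \;\simeq\; A_{[i]}\big/\bigl(\ker(e|_{A_{[i]}})\cdot A_{[i-1]}\bigr) \;\simeq\; e(A_{[i]})\big/e(A_{[i-1]}).$$

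To finish, I would read off this last quotient from the definition of $\lambda(A)$: the restriction $e|_{A_{[j]}}$ is identically zero for $j < \lambda(A)$ and surjective onto $\F_p$ for $\lambda(A) \leq j < p^n$. Hence $e(A_{[i]})/e(A_{[i-1]})$ is one-dimensional precisely when $i = \lambda(A) < p^n$, and is trivial otherwise; in particular the convention $\lambda(A) = p^n$ forces the indicator to vanish for every $i < p^n$, consistent with the formula. This establishes $\Delta(A^0_{\{i\}}) = \Delta(A_{\{i\}}) - \mathds{1}_{i = \lambda(A)}$ in all cases. The main obstacle is really the opening verification that the index is $\F_p$-linear and kills $(\sigma-1)$-images on its domain; once this is set up, the remainder is a direct application of the first isomorphism theorem together with the definition of $\lambda(A)$.
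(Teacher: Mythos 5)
Your proof is correct, and it takes a genuinely different route from the paper's. The paper argues case by case on the relative position of $i$ and $\lambda(A)$: for $i<\lambda(A)$ (or $\lambda(A)=p^n$) it shows directly that any preimage of an element of $A_{\{i\}}$ already has trivial index, while for $i\ge\lambda(A)$ it fixes an element $\chi$ of length $\lambda(A)$ and index $1$ and twists an arbitrary preimage by a power of $\chi$ to kill its index; when $i>\lambda(A)$ the element $\chi$ dies under $(\sigma-1)^{i-1}$ so the twist is invisible, while when $i=\lambda(A)$ one gets the explicit splitting $A_{\{\lambda(A)\}}=\langle\chi^{(\sigma-1)^{\lambda(A)-1}}\rangle\oplus A^0_{\{\lambda(A)\}}$. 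Your version replaces this case analysis with a uniform homomorphism-theorem computation: identifying $A_{\{i\}}\simeq A_{[i]}/A_{[i-1]}$, locating the preimage of $A^0_{\{i\}}$ as $\ker(e|_{A_{[i]}})\cdot A_{[i-1]}$, and concluding $A_{\{i\}}/A^0_{\{i\}}\simeq e(A_{[i]})/e(A_{[i-1]})$, whose dimension is read off from the definition of $\lambda(A)$. Both arguments need the same underlying facts about $e$ --- additivity, $G$-equivariance, and hence vanishing on $(\sigma-1)$-images of elements of length less than $p^n$ --- which the paper uses implicitly and you have (correctly) surfaced as the key preliminary; having verified these, your reduction to the one-line observation about $e(A_{[i]})/e(A_{[i-1]})$ is arguably cleaner, at the cost of not exhibiting the concrete complement $\langle\chi^{(\sigma-1)^{\lambda(A)-1}}\rangle$ that the paper's approach produces.
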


\begin{proof}
Suppose first that $\lambda(A)=p^n$, and let $x \in A_{\{\ell\}}$ be given with $\ell<p^n$.  Any solution $\alpha \in A$ to $x ={\Psi^{\ell-1}} \alpha$ must have $e(\alpha) = 0$, since otherwise $\lambda(A) \leq \ell$.  Hence $x \in A^0_{\{\ell\}}$.  The same argument shows that if $\lambda(A)<p^n$ and $\ell<\lambda(A)$, then $A_{\{\ell\}} = A^0_{\{\ell\}}$.

Suppose, then, that $\lambda(A)<p^n$ and $\ell\geq \lambda(A)$. Let $\chi \in A$ be given with $\ell(\chi) = \lambda(A)$ and $e(\chi) = 1$; let $x \in A_{\{\ell\}}$ be given.  For any solution $\alpha \in A$ to $x ={\Psi^{\ell-1}} \alpha$ we have $e(\alpha  {-e(\alpha)}\chi) = 0$, and hence $$x  {-e(\alpha)\Psi^{\ell-1}}\chi = {\Psi^{\ell-1}}(\alpha {-e(\alpha)}\chi) \in A^0_{\{\ell\}}.$$  Now when $\ell>\lambda(A)=\ell(\chi)$ the left side of this equation is just $x$, and hence we have $x \in A^0_{\{\ell\}}$ as desired.  When $\ell=\lambda(A)$, this equation shows that $$A_{\{\lambda(A)\}} = \langle {\Psi^{\lambda(A)-1}}\chi \rangle \oplus A^0_{\{\lambda(A)\}}.$$  
\end{proof}

We now have the necessary machinery to count the number of solutions to the embedding problem $\xymatrix{M\bullet_\mu G \ar@{->>}[r]&G}$ over $K/F$ within a given solution to $\xymatrix{A\bullet_\lambda G \ar@{->>}[r]&G}$ over $K/F$.  As we have already seen, this amounts to counting those $U \subseteq A$ such that $U \simeq M$ and $\lambda(U) = \mu$.  Though we will handle separate cases depending on $\mu$, the strategy in each case follows that of Theorem \ref{th:counting.M.inside.A}: we determine the conditions that a filtration $F_W$ given by $$F_W:\quad W = W_1 \supseteq W_2 \supseteq \cdots \supseteq W_{p^n} \supseteq W_{p^n+1}=\{0\}$$ must satisfy to be the ``bottom" of such a module, and for each such filtration we count the number of $U \subseteq A$ with $U \simeq M$ and $\lambda(U) = \mu$ ``above" it (i.e., so that $F^{\mbox{\tiny{len}}}_U=F_W$).

In addition to $F^{\mbox{\tiny{len}}}_M$ and $F^{\mbox{\tiny{len}}}_A$, when $\lambda<p^n$ there is another filtration --- which we call $\widehat{F^{\mbox{\tiny{len}}}_A}$ --- that will  play an important role:
\begin{equation}\label{eq:filtrations.of.interest}
\begin{split}
\widehat{F^{\mbox{\tiny{len}}}_A}&: \quad A^G=A_{\{1\}} \supseteq \cdots \supseteq A_{\{\lambda-1\}} \supseteq A^0_{\{\lambda\}} \supseteq A_{\{\lambda+1\}} \supseteq \cdots \supseteq A_{\{p^n\}}\supseteq A_{\{p^n+1\}} =   \{0\}.\\
\end{split}\end{equation}

\begin{lemma}\label{le:fixed.part.for.split}
Suppose that $A \subseteq J(K)$ with $\lambda:=\lambda(A)<p^n$  and that $F_W$ is a filtration $$F_W:\quad W = W_1 \supseteq W_2 \supseteq \cdots \supseteq W_{p^n} \supseteq W_{p^n+1} = \{0\}.$$ Then there exists $U\subseteq A$ with $U \simeq M$, $\lambda(U) = p^n$, and $F^{\tiny{\mbox{len}}}_U = F_W$ if and only if both of the following conditions hold:
\begin{equation}\begin{split}\label{eq:filtration.conditions.split.case}
 F_W \simeq F^{\tiny{\mbox{len}}}_M \quad \mbox{ and } \quad 
F_W \subseteq \widehat{F^{\mbox{\tiny{len}}}_A}.
\end{split}\end{equation}
\end{lemma}

\begin{proof}
First, suppose $U \subseteq A$ with $U \simeq M$ and $\lambda(U) = p^n$. The condition $U \simeq M$ implies $F^{\mbox{\tiny{len}}}_U \simeq F^{\tiny{\mbox{len}}}_M$. The condition $F^{\mbox{\tiny{len}}}_U \subseteq F^{\mbox{\tiny{len}}}_A$ comes from the inclusion $U \subseteq A$. The additional restriction that $\lambda(U) = p^n$ implies that $U_{\{\ell\}}\subseteq A^0_{\{\ell\}}$ for each $1 \leq \ell < p^n$; otherwise there would be some $1 \leq \ell < p^n$ and an element $x \in U_{\{\ell\}}$ such that any solution $u \in U$ to $x = {\Psi^{\ell-1}}u$ would satisfy $e(u) \neq 0$, contradicting $\lambda(U) = p^n$.  Now when $\lambda \neq \ell < p^n$ then $A^0_{\{\ell\}} = A_{\{\ell\}}$ by Lemma \ref{le:trivial.module.fixed.dims}.  Hence we have $F^{\mbox{\tiny{len}}}_U \subseteq \widehat{F^{\mbox{\tiny{len}}}_A}$.

Conversely, suppose $F_W$ satisfies (\ref{eq:filtration.conditions.split.case}).  Choose a basis $\mathcal{I}_{p^n}$ for $W_{p^n}$, and for each $\ell<p^n$ select a basis $\mathcal{I}_\ell$ for a complement of $W_{\ell+1}$ within $W_\ell$.  For each $x \in \mathcal{I}_\ell$ choose an element $\alpha_x$ so that $x = {\Psi^{\ell-1}}\alpha_x$. When $\ell<p^n$ we can --- and do --- choose $\alpha_x$ such that $e(\alpha_x) = 0$; this is possible since $x \in A^0_{\{\ell\}}$ when $\ell = \lambda$, and $A^0_{\{\ell\}} = A_{\{\ell\}}$ when $\lambda \neq \ell < p^n$.  Then $U = \oplus_{x \in \mathcal{I}} \langle \alpha_x \rangle$ satisfies $U \simeq M$, $\lambda(U) = p^n$ and $F^{\mbox{\tiny{len}}}_U = F_W$.
\end{proof}

\begin{theorem}\label{th:count.for.split.group}
Suppose that $E/F$ corresponds to $A \subseteq J(K)$ with $\lambda:=\lambda(A)<p^n$.  Then the number of solutions to the embedding problem $\xymatrix{M \bullet_{p^n} G \ar@{->>}[r]& G}$ over $K/F$ and within $E/F$ is 
$$
\left(\begin{array}{c}\widehat{F^{\mbox{\tiny{len}}}_A}\\F^{\mbox{\tiny{len}}}_M\end{array}\right)
\Omega\left(F^{\mbox{\tiny{len}}}_M,F^{\mbox{\tiny{len}}}_A\right)\left(p^{-\codim\left(M_{\{p^n\}},M_{\{\lambda+1\}}\right)}\right)
.$$
\end{theorem}

\begin{proof}
By Theorem \ref{th:main.theorem} we know that a solution to this embedding problem within $E/F$ corresponds to a submodule 
\begin{equation}\label{eq:U.conditions.split.case}\mbox{$U \subseteq A$ so that $U \simeq M$ and $\lambda(U) = p^n$.}\end{equation}  
Lemma \ref{le:fixed.part.for.split} characterizes the filtrations $F_W$ that are the fixed part of such a submodule $U$. By Lemma \ref{le:flag.counter}, the number of such filtrations is $\binom{\widehat{F^{\mbox{\tiny{len}}}_A}}{F^{\mbox{\tiny{len}}}_M}$.


So suppose we have chosen a filtration $F_W$ satisfying (\ref{eq:filtration.conditions.split.case}), and let $\mathcal{I}$ be a basis for $W$ as in the proof of Lemma \ref{le:fixed.part.for.split}.  A submodule $U \subseteq A$ satisfies $W = U^{G}$ and (\ref{eq:U.conditions.split.case}) if and only if $U = \oplus_{x \in \mathcal{I}} \langle \alpha_x \rangle$ for elements $\{\alpha_x\} \subseteq A$ satisfying
\begin{equation}\label{eq:alpha.conditions.split.case}
\begin{array}{ll}
{\Psi^{\ell-1}}\alpha_x = x,&\mbox{ for all }x \in \mathcal{I}_\ell\\
e(\alpha_x) = 0, &\mbox{ for all }x \in \mathcal{I}_\ell \mbox{ with }\ell<p^n.
\end{array}
\end{equation}
We need to count the number of choices of $\{\alpha_x\}$ satisfying (\ref{eq:alpha.conditions.split.case}) which yield distinct modules.  

Following an argument similar to that in the proof of Theorem \ref{th:counting.M.inside.A}, the sets $\{\alpha_x\}$ satisfying (\ref{eq:alpha.conditions.split.case}) are in bijection with sets $\{g_x\}$ satisfying $\ell(g_x)<\ell$ for each $x \in \mathcal{I}_\ell$ and $e(g_x) = 0$ when $\ell<p^n$.  For each given $x \in B_\ell$, the total number of choices for a given $g_x$ is simply the number of elements of length less than $\ell$ within $A$ (and, when $\ell<p^n$, contained within $A^0$):$$\left\{\begin{array}{ll}p^{\sum_{j<\ell} \dim\left(A^0_{\{j\}}\right)} = p^{\sum_{j<\ell}\dim\left(A_{\{j\}}\right)}p^{-\mathds{1}_{\ell>\lambda}},&\mbox{ if }\ell<p^n\\
p^{\sum_{j<\ell} \dim(A_{\{j\}})},&\mbox{ if } \ell=p^n.\end{array}\right.$$  Hence the total number of choices for the collection $\{\alpha_x\}$ satisfying (\ref{eq:alpha.conditions.split.case}) is given by 
\begin{equation}\begin{split}\label{eq:counting.alpha.sets.mu.equals.p.to.the.n}
\prod_{\ell=1}^{p^n} &\left( p^{\sum_{j<\ell} \dim\left(A_{\{j\}}\right)}p^{-\mathds{1}_{\lambda<\ell<p^n}}\right)^{\codim\left(M_{\{\ell+1\}},M_{\{\ell\}}\right)} \\&=\left(\prod_{\ell=1}^{p^n} \left(p^{\sum_{j<\ell} \dim\left(A_{\{j\}}\right)}\right)^{\codim\left(M_{\{\ell+1\}},M_{\{\ell\}}\right)}\right)~p^{-\codim\left(M_{\{p^n\}},M_{\{\lambda+1\}}\right)}.
\end{split}
\end{equation}

Again following the proof of Theorem \ref{th:counting.M.inside.A}, we can use Lemma \ref{le:module.differentiator} to show that this overcounts by a factor of $$\prod_{\ell=1}^{p^n} \left(p^{\sum_{j<\ell}\dim(M_{\{j\}})}\right)^{\codim\left(M_{\{\ell+1\}},M_{\{\ell\}}\right)}.$$  The quotient of these two quantities gives the desired result.

\end{proof}

\begin{remark*}
One can choose to use $\Omega\left(F^{\mbox{\tiny{len}}}_M,\widehat{F^{\mbox{\tiny{len}}}_A}\right)$ in place of $\Omega\left(F^{\mbox{\tiny{len}}}_M,F^{\mbox{\tiny{len}}}_A\right)$ in the previous theorem.  If one does this, then the number of solutions to the embedding problem is
$$\left(\begin{array}{c}\widehat{F^{\mbox{\tiny{len}}}_A}\\F^{\mbox{\tiny{len}}}_M\end{array}\right)
\Omega\left(F^{\mbox{\tiny{len}}}_M,\widehat{F^{\mbox{\tiny{len}}}_A}\right)\left(p^{\dim\left(M_{\{p^n\}}\right)}\right).
$$ To see this, one substitutes $\dim\left(A^0_{\{\lambda\}}\right)$ in place of $\dim\left(A_{\{\lambda\}}\right)$ in (\ref{eq:counting.alpha.sets.mu.equals.p.to.the.n}); doing so kills the factor of $p^{-1}$ that appeared for in the terms corresponding to  $\lambda<\ell<p^n$.  On the other hand,  the term corresponding to $\ell=p^n$ uses $\dim\left(A_{\{\lambda\}}\right)=\dim\left(A^0_{\{\lambda\}}\right)+1$ in its evaluation, and so we pick up an additional factor of $p^{\dim\left(M_{\{p^n\}}\right)}$.
\end{remark*}

\begin{example}\label{ex:count.when.mu.equals.p.to.the.n}
We will again consider the modules $M$ and $A$ from Example \ref{ex:count.when.mu.equals.lambda.equals.p.to.the.n}, though this time we'll assume $\lambda :=\lambda(A)=1$.  Hence we are interested in counting solutions to the embedding problem $\xymatrix{M \bullet_{p^n} G \ar@{->>}[r]& G}$ over $K/F$ that are within a given extension $E/F$ that solves the embedding problem $\xymatrix{A \bullet_1 G \ar@{->>}[r] & G}$. 

We first calculate $\binom{\widehat{F^{\mbox{\tiny{len}}}_A}}{F^{\mbox{\tiny{len}}}_M}$.  As before we only have to consider the terms coming from $\ell\in\{1,3,25\}$, since $\codim\left(M_{\{\ell+1\}},M_{\{\ell\}}\right)=0$ otherwise.   We have $$\binom{\widehat{F^{\mbox{\tiny{len}}}_A}}{F^{\mbox{\tiny{len}}}_M} = 
\binom{5}{3}_5 \binom{9-3}{4-3}_5 \binom{10-4}{6-4}_5 = 40326324754716.
$$
Since $\codim\left(M_{\{25\}},M_{\{2\}}\right)=1$, we can use our previous calculation of $\Omega\left(F^{\mbox{\tiny{len}}}_M,F^{\mbox{\tiny{len}}}_A\right) = 5^{433}$ to give the total number of solutions to the embedding problem $M \bullet_{p^n} G \twoheadrightarrow G$ over $K/F$ within a given solution to $A \bullet_1 G \twoheadrightarrow G$ over $K/F$ as
$$40326324754716\cdot 5^{433} \cdot 5^{-1} \approx 10^{315}.$$
\end{example}

We now proceed to give the generic formula for the number of  solutions to $\xymatrix{M \bullet_\mu G \ar@{->>}[r] & G}$ within a given solution to $\xymatrix{A \bullet_\lambda G \ar@{->>}[r] & G}$ when  $\lambda = \mu < p^n$.  

\begin{lemma}\label{le:filtration.conditions.lambda.equals.mu}
Suppose that $A\subseteq J(K)$ with $\lambda:=\lambda(A) < p^n$ and that  $F_W$ is a filtration $$W = W_1 \supseteq W_2 \supseteq \cdots \supseteq W_{p^n} \supseteq W_{p^n+1} = \{0\}.$$ Then there exists $U \subseteq A$ with $U \simeq M$, $\lambda(U) = \lambda$ and $F^{\mbox{\tiny{len}}}_U = F_W$ if and only if all the following conditions hold:
\begin{equation}\label{eq:filtration.conditions.lambda.case}\begin{split}
F_W \simeq F^{\mbox{\tiny{len}}}_M, \quad F_W \subseteq F^{\mbox{\tiny{len}}}_A,\quad \mbox{ and } \quad F_W \not\subseteq \widehat{F^{\mbox{\tiny{len}}}_A}.
\end{split}\end{equation} 
\end{lemma}

\begin{remark*}
The final two conditions imply that $W_\lambda \cap \left(A_{\{\lambda\}} \setminus A^0_{\{\lambda\}}\right) \neq \emptyset$.
\end{remark*}

\begin{proof}
Suppose that $U \subseteq A$ satisfies $U \simeq M$ and $\lambda(U) = \lambda$, and consider $F^{\mbox{\tiny{len}}}_U$.  The condition $U \simeq M$ implies $F^{\mbox{\tiny{len}}}_U \simeq F^{\mbox{\tiny{len}}}_M$.  The inclusion $U \subseteq A$ implies that $F^{\mbox{\tiny{len}}}_U \subseteq F^{\mbox{\tiny{len}}}_A$.  For the final condition, suppose to the contrary that $U_{\{\lambda\}} \subseteq A^0_{\{\lambda\}}$, and  let $u \in U$ be given so that $e(u) \neq 0$ and $\ell(u) = \lambda$.  Now by assumption ${\Psi^{\lambda-1}} u = {\Psi^{\lambda-1}} v$ for some $v \in A^0$, and so $u-v$ has $\ell(u-v)<\lambda$ and $e(u-v) \neq 0$.  This contradicts the definition of $\lambda = \lambda(A)$, and so our assumption that $U_{\{\lambda\}} \subseteq A^0_{\{\lambda\}}$ is false.

Conversely, suppose that $F_W$ is a filtration that satisfies  (\ref{eq:filtration.conditions.lambda.case}).  Choose a basis $\mathcal{I}_{p^n}$ for $W_{p^n}$, and for each $\ell<p^n$ select a basis $\mathcal{I}_\ell$ for a complement of $W_{\ell+1}$ within $W_\ell$.  For each $x \in \mathcal{I}_\ell$ choose an element $\alpha_x$ so that $x = {\Psi^{\ell-1}}\alpha_x$; when $\ell<\lambda$ we  choose $\alpha_x$ such that $e(\alpha_x) = 0$ since $x \in A^0_{\{\ell\}}$, and when $\ell = \lambda$ we must have $e(\alpha_x) \neq 0$ for some $x$ since $\mathcal{I}_\lambda \not\subseteq A^0_{\{\lambda\}}$.  Then the module $U = \oplus_{x \in \mathcal{I}} \langle \alpha_x \rangle$ satisfies the conditions $U \simeq M$ and $\lambda(U) = \lambda$.
\end{proof}

\begin{theorem}\label{th:count.for.lambda.case}
Suppose $E/F$ corresponds to $A \subseteq J(K)$ with $\lambda:=\lambda(A)<p^n$.  Suppose further that $M$ contains a summand of dimension $\lambda$.  Then the number of solutions to the embedding problem $\xymatrix{M \bullet_\lambda G \ar@{->>}[r]& G}$ over $K/F$ and within $E/F$ is 
\begin{equation*}\begin{split}
\left(\binom{F^{\mbox{\tiny{len}}}_A}{F^{\mbox{\tiny{len}}}_M}-\binom{\widehat{F^{\mbox{\tiny{len}}}_A}}{F^{\mbox{\tiny{len}}}_M}\right)
\Omega\left(F^{\mbox{\tiny{len}}}_M,F^{\mbox{\tiny{len}}}_A\right).
\end{split}\end{equation*}
\end{theorem}

\begin{proof}
We must count the number of submodules 
\begin{equation}\label{eq:U.conditions.mu.equals.lambda}U \subseteq A\mbox{ such that }U \simeq M\mbox{ and }\lambda(U) = \lambda.\end{equation}  As before, we do this by first counting the number of filtrations $F_W$ that are the fixed part of such a submodule, and then for each such filtration we count the number of submodules $U \subseteq A$ ``above'' this filtration which satisfy (\ref{eq:U.conditions.mu.equals.lambda}).

By Lemma \ref{le:filtration.conditions.lambda.equals.mu}, a filtration $F_W$ equals $F^{\mbox{\tiny{len}}}_U$ for some $U$ satisfying (\ref{eq:U.conditions.mu.equals.lambda}) if and only if $F_W$ satisfies (\ref{eq:filtration.conditions.lambda.case}).  Lemma \ref{le:flag.counter} tells us the number of such filtrations is $\binom{F^{\mbox{\tiny{len}}}_A}{F^{\mbox{\tiny{len}}}_M} - \binom{\widehat{F^{\mbox{\tiny{len}}}_A}}{F^{\mbox{\tiny{len}}}_M}.$


So suppose we have chosen a filtration $F_W$ satisfying (\ref{eq:filtration.conditions.lambda.case}), and let $\mathcal{I}$ be a basis for $W$ as in the proof of Lemma \ref{le:filtration.conditions.lambda.equals.mu}.  We claim that for any set of elements $\{\alpha_x\} \subseteq A$ satisfying
\begin{equation}\label{eq:alpha.conditions.lambda.case}
\begin{array}{ll}
{\Psi^{\ell-1}} \alpha_x = x,&\mbox{ for all }x \in \mathcal{I}_\ell\\
\end{array}
\end{equation}
 must have $U=\oplus \langle \alpha_x \rangle$ satisfying (\ref{eq:U.conditions.mu.equals.lambda}).  Certainly $U \simeq M$, but we will also have the necessary index conditions because
$W_{\ell} \subseteq A^0_{\{\ell\}}$ for all $i<\lambda$ (since no elements of length less than $\lambda$ in $A$ have nontrivial index) and $W_{\lambda} \not\subseteq A^0_{\{\lambda\}}$ by construction.

Hence we can repeat the arguments found in Theorem \ref{th:counting.M.inside.A} to tell us that the number of sets $\{\alpha_x\}$ which lead to distinct modules isomorphic to $M$ and satisfying the necessary index conditions is  $\Omega\left(F^{\mbox{\tiny{len}}}_A,F^{\mbox{\tiny{len}}}_M\right)$.
\end{proof}

\begin{example}
We will continue with the modules $M$ and $A$ introduced in Example \ref{ex:count.when.mu.equals.lambda.equals.p.to.the.n}, though this time we find the number of module $U \subseteq A$ so that $U \simeq M$ and $\lambda(U) = 1 = \lambda(A)$.

In fact, in Examples \ref{ex:count.when.mu.equals.lambda.equals.p.to.the.n} and \ref{ex:count.when.mu.equals.p.to.the.n} we already calculated the necessary quantities for this enumeration.  The total number of solutions to the embedding problem $\xymatrix{M\bullet_1 G \ar@{->>}[r]&G}$ over $K/F$ within a solution to $\xymatrix{A\bullet_1 G \ar@{->>}[r]&G}$ over $K/F$ is 
$$\left(1008467924179716-40326324754716\right)\cdot 5^{433} \approx 10^{317}.$$
\end{example}

\begin{lemma}\label{le:filtration.conditions.lambda.less.than.mu}
Suppose that $A\subseteq J(K)$ with $\lambda:=\lambda(A)$, that $\lambda < \mu<p^n$, and that $F_W$ is a filtration $$W = W_1 \supseteq W_2 \supseteq \cdots \supseteq W_{p^n} \supseteq W_{p^n+1} = \{0\}.$$ Then there exists $U \subseteq A$ with $U \simeq M$, $\lambda(U) = \mu$ and $F^{\mbox{\tiny{len}}}_U = F_W$ if and only if the following conditions hold:
\begin{equation}\label{eq:filtration.conditions.mu.case}\begin{split}
F_W \simeq F^{\mbox{\tiny{len}}}_M \quad \mbox{ and } \quad F_W \subseteq \widehat{F^{\mbox{\tiny{len}}}_A}.\end{split}\end{equation} 
\end{lemma}

\begin{proof}
The proof is almost identical to that of Lemma \ref{le:fixed.part.for.split}.  The only difference comes when constructing a module $U$ ``above" a given filtration.  We select sets $\{\mathcal{I}_\ell\}$ as before, and choose $\{\alpha_x\}$ so that for all $x \in \mathcal{I}_\ell$ we have $x = \Psi^{\ell-1} \alpha_x$.  In this case, though, we must choose $\alpha_x$ subject to the following conditions: $e(\alpha_x)=0$ for all $x \in \mathcal{I}_\ell$ with $\ell<\mu$ (which is possible since $x \in A^0_{\{\ell\}}$ for all $\ell<\mu$ by construction); and $e(\alpha_x) \neq 0$ for some $x \in \mathcal{I}_\mu$ (which is possible since $\mu>\lambda$).

\end{proof}

\begin{theorem}\label{th:count.for.mu.case}
Suppose that $E/F$ corresponds to $A \subseteq J(K)$ with $\lambda:=\lambda(A)<p^n$.  Suppose further that $M$ contains a summand of dimension $\mu$ satisfying $\lambda<\mu<p^n$.  Then the number of solutions to the embedding problem $\xymatrix{M \bullet_\mu G \ar@{->>}[r]& G}$ over $K/F$ and within $E/F$ is 
\begin{equation*}\begin{split}
\binom{\widehat{F^{\mbox{\tiny{len}}}_A}}{F^{\mbox{\tiny{len}}}_M}
&\Omega\left(F^{\mbox{\tiny{len}}}_M,F^{\mbox{\tiny{len}}}_A\right)
\left(p^{-\codim\left(M_{\{\mu\}},M_{\{\lambda+1\}}\right)}-p^{-\codim\left(M_{\{\mu+1\}},M_{\{\lambda+1\}}\right)}\right).
\end{split}\end{equation*}
\end{theorem}

\begin{proof}
This time we are interested in those submodules 
\begin{equation}\label{eq:U.condition.mu.case}
\mbox{$U \subseteq A$ such that $U \simeq M$ and $\lambda(U) = \mu$.}
\end{equation}
By Lemma \ref{le:filtration.conditions.lambda.less.than.mu}, a filtration $F_W$ equals $F^{\mbox{\tiny{len}}}_U$ for some $U$ satisfying (\ref{eq:U.condition.mu.case}) if and only if $F_W$ satisfies (\ref{eq:filtration.conditions.mu.case}).  By Lemma \ref{le:flag.counter}, the number of such filtrations is $
\binom{\widehat{F^{\mbox{\tiny{len}}}_A}}{F^{\mbox{\tiny{len}}}_M}
.$

So suppose we have chosen a filtration $F_W$ that satisfies (\ref{eq:filtration.conditions.mu.case}), and let $\mathcal{I}$ be a basis for $W$ as in the proof of Lemma \ref{le:filtration.conditions.lambda.less.than.mu}.  A submodule $U \subseteq A$ has $F_W = F^{\mbox{\tiny{len}}}_U$ and (\ref{eq:U.condition.mu.case}) if and only if $U = \oplus_{x \in \mathcal{I}} \langle \alpha_x \rangle$ for elements $\{\alpha_x\} \subseteq A$ satisfying
\begin{equation}\label{eq:alpha.conditions.mu.case}
\begin{array}{ll}
{\Psi^{\ell-1}} \alpha_x = x,&\mbox{ for all }x \in \mathcal{I}_\ell\\
e(\alpha_x) = 0 &\mbox{ for all }x \in \mathcal{I}_\ell \mbox{ with }\ell<\mu\\
e(\alpha_x) \neq 0 &\mbox{ for some }x \in \mathcal{I}_\mu.
\end{array}
\end{equation}

To enumerate sets $\{\alpha_x\}$ satisfying (\ref{eq:alpha.conditions.mu.case}), start by choosing elements $\{\beta_x\}_{x\in\mathcal{I}}$ so that 
\begin{equation*}\begin{split}
{\Psi^{\ell-1}} \beta_x=x&\mbox{ for all }x \in \mathcal{I}_\ell\\
e(\beta_x) = 0 &\mbox{ for all }x \in \mathcal{I}_\ell \mbox{ with }\ell<p^n.
\end{split}\end{equation*}  If $\{\alpha_x\}$ satisfies (\ref{eq:alpha.conditions.mu.case}), then for each $x \in \mathcal{I}_\ell$ there exists $h_x \in A$ such that $\alpha_x = h_x + \beta_x$ and  $\ell(h_x)<\ell$.  Note also that $e(\alpha_x) = e(h_x)$ when $\ell<p^n$.  Hence to enumerate the number of choices of $\{\alpha_x\}$ satisfying (\ref{eq:alpha.conditions.mu.case}), we will count the total number of ways to choose $\{h_x\} \subseteq A$ so that $e(h_x) = 0$ for all $x \in \mathcal{I}_\ell$ with $\ell<\mu$ and subtract the total number of ways to choose $\{h_x\} \subseteq A$ so that $e(h_x) = 0$ for all $x \in \mathcal{I}_\ell$ with $\ell \leq \mu$.  To do this, note that the number of elements of length less than $\ell$ within $A$ is $p^{\sum_{j<\ell}\dim(A_{\{j\}})}$, and the number of elements of length less than $\ell$ within $A$ that have trivial index is $p^{\sum_{j<\ell}\dim(A^0_{\{j\}})}= p^{\sum_{j<\ell}\dim(A_{\{j\}})-\mathds{1}_{j=\lambda}}$.  Therefore the total number of choices for $\{h_x\}$ --- and hence the total number of collections $\{\alpha_x\}$ satisying (\ref{eq:alpha.conditions.mu.case}) --- is given by
\begin{equation*}\begin{split}
\prod_{\ell=1}^{p^n}&\left(p^{\sum_{j<\ell}\dim(A_{\{j\}})-\mathds{1}_{j=\lambda}\cdot \mathds{1}_{\ell<\mu}}\right)^{\codim\left(M_{\{\ell+1\}},M_{\{\ell\}}\right)}-\prod_{\ell=1}^{p^n}\left(p^{\sum_{j<\ell}\dim(A_{\{j\}})-\mathds{1}_{j=\lambda}\cdot \mathds{1}_{\ell\leq\mu}}\right)^{\codim\left(M_{\{\ell+1\}},M_{\{\ell\}}\right)}\\
&=\left(\prod_{\ell=1}^{p^n}\left(p^{\sum_{j<\ell}\dim\left(A_{\{j\}}\right)}\right)^{\codim\left(M_{\{\ell+1\}},M_{\{\ell\}}\right)}\right)\left(p^{-\codim\left(M_{\{\mu\}},M_{\{\lambda+1\}}\right)}-p^{-\codim\left(M_{\{\mu+1\}},M_{\{\lambda+1\}}\right)}\right).
\end{split}\end{equation*}

Now suppose that $\{\alpha_x\}$ and $\{\hat \alpha_x\}$ are two collections satisfying (\ref{eq:alpha.conditions.mu.case}); we examine when $\oplus \langle \alpha_x \rangle = \oplus \langle \hat \alpha_x\rangle$.  Note that for each $x \in \mathcal{I}_\ell$ there exists $g_x \in A$ so that $\hat\alpha_x = g_x + \alpha_x$, and that $\ell(g_x)<\ell$. Notice also that $e(g_x) = 0$ for all $x \in \mathcal{I}_\ell$ with $\ell<\mu$, and that $e(g_x + \alpha_x) \neq 0$ for some $x \in \mathcal{I}_\mu$.  On the other hand, Lemma \ref{le:module.differentiator} tells us that $\oplus \langle \alpha_x \rangle = \oplus \langle \hat \alpha_x \rangle$ if and only if $g_x \in \oplus \langle \alpha_x \rangle$ for all $x \in \mathcal{I}$.  Hence we must count the number of collections $\{g_x\} \subseteq \oplus \langle \alpha_x \rangle$ satisying 
\begin{equation}\begin{split}
\ell(g_x) < \ell, &\mbox{ for all }x \in \mathcal{I}_\ell\\
e(g_x) = 0, &\mbox{ for all }x \in \mathcal{I}_\ell \mbox{ where }\ell<\mu\\
e(g_x + \alpha_x) \neq 0 &\mbox{ for some }x \in \mathcal{I}_\mu.
\end{split}\end{equation}
Since $\lambda(\oplus \langle \alpha_x \rangle) = \mu$, if the first condition is satisfied then the second and third conditions are automatically satisfied.  Hence we must only count the number of collections $\{g_x\} \subseteq \oplus \langle \alpha_x \rangle$ satisfying $\ell(g_x)<\ell$ for each $x \in \mathcal{I}_\ell$.  Since the number of elements of length $\ell$ within $\oplus \langle \alpha_x \rangle \simeq M$ is $p^{\sum_{j<\ell}\dim(M_{\{j\}})}$, the number of such collections is
$$\prod_{\ell=1}^{p^n} \left(p^{\sum_{j<\ell}\dim(M_{\{j\}})}\right)^{\codim\left(M_{\{\ell+1\}},M_{\{\ell\}}\right)}.$$
\end{proof}

\begin{example}
We again revisit the modules $M$ and $A$ from Example \ref{ex:count.when.mu.equals.lambda.equals.p.to.the.n}, though this time we'll study embedding problems of the form $\xymatrix{M \bullet_{3} G \ar@{->>}[r] &G}$ within a solution to $\xymatrix{A \bullet_1 G \ar@{->>}[r] &G}$.  

We've already done most of the necessary calculations.  Since $\codim\left(M_{\{3\}},M_{\{2\}}\right)=0$ and $\codim\left(M_{\{4\}},M_{\{2\}}\right)=1$, the total number of solutions to the embedding problem $\xymatrix{M\bullet_3 G \ar@{->>}[r]&G}$ over $K/F$ and within a solution to $\xymatrix{A\bullet_1 G \ar@{->>}[r]&G}$ over $K/F$ is
$$40326324754716\cdot 5^{433} \cdot \left(5^{0}-5^{-1}\right)\approx 10^{316}.$$
\end{example}

Now that we have counted solutions to the embedding problem $\xymatrix{M \bullet_\mu G \ar@{->>}[r] & G}$ over a given $G$-extension within a solution to the embedding problem $\xymatrix{A\bullet_\lambda G \ar@{->>}[r]&G}$ over the same $G$-extension, we put these results to work by giving a lower bound on the number of such solutions.

\begin{corollary}\label{cor:bounding.realization.multiplicity.with.free.summands}
Suppose that $E/F$ corresponds to $A\subseteq J(K)$ with $\lambda:=\lambda(A)<p^n$.  Suppose further that $M \not\simeq A$ and $\xymatrix{M \bullet_\mu G \ar@{->>}[r]&G}$ has at least one solution over $K/F$ within $E/F$.  Then $\xymatrix{M \bullet_\mu G \ar@{->>}[r]&G}$ has a least $p^{\dim\left(M_{\{p^n\}}\right)}$ many solutions over $K/F$ within $E/F$.
\end{corollary}

\begin{proof}
Suppose first that the number of elements in $M$ of length at most $p^n-1$ is smaller than the number of elements in $A$ of length at most $p^n-1$.  We then have 
$$\left(p^{\sum_{j<p^n} \dim\left(A_{\{j\}}\right)-\dim\left(M_{\{j\}}\right)}\right)^{\dim\left(M_{\{p^n\}}\right)}\geq p^{\dim\left(M_{\{p^n\}}\right)}.$$
This term appears as a factor of the number of solutions to the embedding problem $\xymatrix{M \bullet_{\mu} G\ar@{->>}[r] &G}$ over $K/F$ within $E/F$ in all of Theorems \ref{th:count.for.split.group}, \ref{th:count.for.lambda.case} and \ref{th:count.for.mu.case} and Corollary \ref{cor:count.for.split.inside.split.case}, and hence we have at least $p^{\dim\left(M_{\{p^n\}}\right)}$ solutions to this embedding problem. (Note that the non-integer factors in Theorems \ref{th:count.for.split.group} and \ref{th:count.for.mu.case} are absorbed by the other factors of $\Omega\left(F^{\mbox{\tiny{len}}}_M,F^{\mbox{\tiny{len}}}_{A}\right)$.)  


Suppose, then, that the number of elements in $M$ of length at most $p^n-1$ is equal to the number of elements in $A$ of length at most $p^n-1$.  In all cases, one of the factors in the count on solutions to $\xymatrix{M \bullet_{\lambda} G \ar@{->>}[r] &G}$ over $K/F$ and within $E/F$ is
$$\binom{\dim\left(A_{\{p^n\}}\right)}{\dim\left(M_{\{p^n\}}\right)}_p.$$  Since $M \not\simeq A$ but $M$ and $A$ have the same number of elements of length at most $p^n-1$, it must be that $\dim(A_{\{p^n\}}) >\dim(M_{\{p^n\}}$.  Then we have
$$\binom{\dim\left(A_{\{p^n\}}\right)}{\dim\left(M_{\{p^n\}}\right)}_p \geq \binom{\dim\left(M_{\{p^n\}}\right)+1}{\dim\left(M_{\{p^n\}}\right)}_p = \frac{p^{\dim\left(M_{\{p^n\}}\right)+1}-1}{p-1} \geq p^{\dim\left(M_{\{p^n\}}\right)}.$$
\end{proof}

\section{Embedding problems over a given $K/F$}\label{sec:general.applications}

We have already seen that a solution to the embedding problem $\xymatrix{M \bullet_\mu G \ar@{->>}[r]& G}$ over $K/F$ corresponds to a submodule $U \subseteq J(K)$ with $U \simeq M$ and $\lambda(U) = \mu$.  If one knows the module structure of $J(K)$ and a method for computing $\lambda(J(K))$ for a given extension $K/F$ with $\Gal(K/F) \simeq \Z/p^n\Z$, then one knows everything about embedding problems over $K/F$ with elementary $p$-abelian kernel.  On the other hand, if one can make general statements about module structures of $J(K)$ and $\lambda(J(K))$ across all fields $K$, then one can make connections between \emph{a priori} unrelated embedding problems.  These goals will be the focus of this section.

We begin with a discussion of the module structure for $J(K)$.  The investigation into the module structure of $J(K)$ began with Fadeev and Borevi\v{c}'s computations of $J(K)$ when $K$ is a local field (see \cite{Bo,F}).  Min\'{a}\v{c} and Swallow were able to compute the module structure of $J(K)$ when $\Gal(K/F) \simeq \Z/p\Z$ and $\xi_p \in K$ in \cite{MS1}. In the case that $\ch{K} \neq p$ and $\Gal(K/F) \simeq \Z/p^n\Z$ with $n \geq 1$, the module structure for $J(K)$ was computed in \cite[Th.~2]{MSS1} and \cite[Th.~2]{MSSauto}.   

To state the decomposition, recall our convention that $G=\Gal(K/F)=\langle \sigma \rangle \simeq \Z/p^n\Z$. We write $K_i$ for the intermediate field of degree $p^i$ over $F$, and write $G_i = \Gal(K_i/F)$.  We also assign an invariant $i(K/F) \in \{-\infty,0,\cdots,n-1\}$ as in the paragraph preceding the statement of Theorem \ref{th:counting.to.split.embedding.problem} in section \ref{sec:introduction}.  When $\ch{K} \neq p$, we write $\hat K$ for the field $K(\xi_p)$, and likewise denote $\hat K_i = K_i(\xi_p)$.  A generator for $\Gal(\hat K/K)$ is denoted $\epsilon$, and $\epsilon(\xi_p) = \xi_p^t$.  For a submodule $A \subseteq J(K)$ we write $\left.A \right|_{\epsilon = t}$ for the $t$-eigenspace of $\epsilon$ within $A$.

\begin{proposition}\label{prop:module.structure.char.not.p} Suppose $\ch{K} \neq p$ and $\Gal(K/F) \simeq \Z/p^n\Z$; additionally, assume that if $p=2$ and $n=1$, then $i(K/F) = -\infty$. Then
$$J(K) = \langle \chi \rangle \oplus \bigoplus_{i=0}^n Y_i,$$
where 
\begin{itemize}
\item $\chi \in J(K)$ is an element of nontrivial index and length $p^{i(K/F)}+1$, and
\item $Y_i \simeq \oplus_{\mathfrak{d}_i} \F_p[G_i]$, and $Y_i \subseteq J(K)^0$ and $\mathfrak{d}_i = \codim_{\F_p}\left(\left.\frac{\left(N_{\hat K_{i+1}/\hat F}(\hat K_{i+1}^\times)\right) \hat K^{\times p}}{\hat K^{\times p}}\right|_{\epsilon=t},\left.\frac{\left(N_{\hat K_i/\hat F}(\hat K_i^\times)\right) \hat K^{\times p}}{\hat K^{\times p}}\right|_{\epsilon=t}\right).$
\end{itemize}
\end{proposition}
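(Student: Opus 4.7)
The plan is to reconstruct $J(K)$ from its fixed submodule via Proposition \ref{prop:build.a.module}, after identifying the filtration on $J(K)^G$ with a chain of norm groups from the intermediate fields $\hat K_i$; this is the route taken in \cite{MSS1,MSSauto}, and my aim is to sketch that path.

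First I would pass to $\hat K = K(\xi_p)$ and, via Kummer theory, identify $J(K)$ with the $\epsilon = t$ eigenspace of $\hat K^\times/\hat K^{\times p}$ as an $\F_p[G]$-module, so that all subsequent computations happen inside $\hat K$. The central identification is
\begin{equation*}
J(K)^G_{\{p^i\}} \;=\; \left.\frac{N_{\hat K_i/\hat F}(\hat K_i^\times)\, \hat K^{\times p}}{\hat K^{\times p}}\right|_{\epsilon = t} \qquad (0 \leq i \leq n).
\end{equation*}
The inclusion ``$\supseteq$'' uses the identity $(\sigma-1)^{p^i - 1} = 1 + \sigma + \cdots + \sigma^{p^i - 1}$ in $\F_p[G]$; applied to $v \in \hat K_i^\times$, this gives $v^{(\sigma-1)^{p^i - 1}} \equiv N_{\hat K_i/\hat F}(v) \pmod{\hat K^{\times p}}$. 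The reverse inclusion is Hilbert 90 in Kummer form: if $w^{(\sigma-1)^{p^i-1}}$ is $G$-fixed then the class of $w$ is fixed by $\Gal(\hat K/\hat K_i)$, which places $w$ in $\hat K_i^\times \cdot \hat K^{\times p}$. This identification forces the successive codimensions in the stated filtration to be exactly the $\mathfrak{d}_i$.

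Second, I would feed the filtration $\{J(K)^G_{\{p^i\}}\}$ into Proposition \ref{prop:build.a.module}: a basis of each successive quotient lifts through $(\sigma-1)^{p^i-1}$ to an independent cyclic summand of length exactly $p^i$, producing $\mathfrak{d}_i$ copies of $\F_p[G]/(\sigma-1)^{p^i} \simeq \F_p[G_i]$. What remains is to track indices. By Theorem \ref{th:parameterization} and the definition of $s := i(K/F)$, the minimality condition defining $s$ is equivalent to saying that, among the generators chosen at the level $p^s$ stratum, exactly one must carry nontrivial index; lifting it once more through $(\sigma-1)$ yields the exceptional summand $\langle \chi \rangle$ of length $p^s + 1$. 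Every other chosen generator can then be modified by a scalar multiple of an appropriate translate of $\chi$ so as to land in $J(K)^0$, producing the $Y_i$.

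The principal obstacle will be the index bookkeeping at level $s$: one must verify that the entire nontrivial-index obstruction can be absorbed into a \emph{single} cyclic block (so that only one length-$(p^s+1)$ summand appears and not several) and that no new index obstructions are forced at higher levels $i > s$. Both points follow from the norm-group identification above: at any level $p^i$ with $i > s$, every fixed class already lifts as a norm from $\hat K_i$, and transitivity of the norm combined with the definition of $e(\cdot)$ forces the index of such a lift to vanish. Once this is checked, collecting the cyclic summands produced above yields exactly the decomposition in the statement.
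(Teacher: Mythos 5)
The paper does not supply a proof of this proposition: it is stated as a recalled result and attributed to [MSS1, Th.~2] and [MSSauto, Th.~2], so there is no internal argument to compare against. What you offer is a sketch of the strategy from those references, and the broad outline (pass to Kummer classes, relate the fixed-module filtration to norm groups, then rebuild $J(K)$ from its fixed submodule using Proposition~\ref{prop:build.a.module}) is the right template. The obstacles, however, are not incidental details that can be waved away; they are exactly where the hard content of [MSS1, MSSauto] lives, and your sketch mishandles them in a way that makes the argument internally inconsistent.

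The specific gap is the ``$\subseteq$'' half of your central identification. You assert that if $w^{(\sigma-1)^{p^i-1}}$ is fixed, then the class $[w]$ is fixed by $\Gal(\hat K/\hat K_i)$ --- that much is fine --- and that this ``places $w$ in $\hat K_i^\times \cdot \hat K^{\times p}$'' by Hilbert~90. This last step is false in general. Taking $\Gal(\hat K/\hat K_i)$-invariants of the sequence $1 \to \hat K^{\times p} \to \hat K^\times \to \hat K^\times/\hat K^{\times p} \to 1$ shows that the cokernel of $\hat K_i^\times \hat K^{\times p}/\hat K^{\times p} \to \left(\hat K^\times/\hat K^{\times p}\right)^{\Gal(\hat K/\hat K_i)}$ is $H^1\!\left(\Gal(\hat K/\hat K_i),\hat K^{\times p}\right)$, which embeds into $H^2\!\left(\Gal(\hat K/\hat K_i),\mu_p\right)\cong \Z/p\Z$ and need not vanish. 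The failure of this surjectivity at exactly one level is what produces the exceptional summand $\langle\chi\rangle$, of length $p^{i(K/F)}+1$ rather than a power of $p$. As written, your identification would force every summand to have length $p^j$ and force $\Delta\!\left(J(K)_{\{p^j\}}\right)$ to equal the norm-group dimension with no correction term, which contradicts the proposition itself.

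This then makes the ``index bookkeeping'' step circular. You argue that everything in the norm group has trivial index (true, by transitivity of the norm and the definition of $e$) and simultaneously that some class at level $p^s$ has nontrivial index so that it can be lifted once more to length $p^s+1$. Under your claimed equality, those two statements cannot both hold --- the class with nontrivial index is precisely the one your Hilbert~90 step claimed cannot exist. A correct proof must first establish the filtration equality \emph{with the correction term}, locate the single level at which the cokernel is nontrivial, show that this level is characterized by $i(K/F)$, and only then apply Proposition~\ref{prop:build.a.module} to the \emph{full} filtration $\left\{J(K)_{\{\ell\}}\right\}_{1\le\ell\le p^n+1}$ (not just the powers of $p$), so that the extra jump at $\ell=p^{i(K/F)}+1$ is detected and yields $\chi$. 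You should also note that the paper's own proof of the characteristic-$p$ analogue, Proposition~\ref{prop:module.structure.char.p}, does not follow the norm-filtration route at all, but instead iterates the Jensen--Ledet--Yau solvability of central nonsplit embedding problems; so even within this paper there is evidence that the norm-filtration argument is not the only, and perhaps not the most direct, path.
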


\begin{remark*}
When $p=2$, $n=1$ and $i(K/F) = 0$, the module structure of $J(K)$ is simply $J(K) \simeq Y_0 \oplus Y_1$, with the modules $Y_i$ satisfying the properties above (see \cite[Th.~1]{MSS1}).  In this case there aren't any elements of nontrivial index, whence the lack of a summand of the form $\langle \chi \rangle$ as in the other cases.  Note that this means $\lambda\left(J(K)\right) = p^{i(K/F)}+1 =p^n$ in this case.  In fact, it is not hard to see that $p^{i(K/F)}+1 = p^n$ if and only if $p=2$, $n=1$ and $i(K/F)=0$.
\end{remark*}

The module structure for $J(K)$ when $\ch{K} = p$ is very similar, if a little simpler.

\begin{proposition}\label{prop:module.structure.char.p}
Suppose $\ch{K}=p$, and $G = \Gal(K/F) \simeq \Z/p^n\Z$.  Then $i(K/F) = -\infty$, and
$$J(K) = \langle \chi \rangle \oplus Y_n$$
where
\begin{itemize}
\item $\chi \in J(K)$ is an element of nontrivial index and length $1$, and
\item $Y_n \simeq \oplus_{\mathfrak{d}_n} \F_p[G]$ and $\mathfrak{d}_n = \dim_{\F_p}\left(\frac{Tr_{K/F}(K) + \wp(K)}{\wp(K)}\right) = \dim_{\F_p}\left(\frac{F + \wp(K)}{\wp(K)}\right)$
\end{itemize}
\end{proposition}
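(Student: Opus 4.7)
The plan is to mirror Proposition \ref{prop:module.structure.char.not.p} by first pinning down $i(K/F)$, then producing the distinguished element $\chi$, and finally identifying $Y_n$ through a cohomological dimension count. For $i(K/F)$: Witt's construction in \cite{Wi} (and the treatment of central nonsplit embedding problems in \cite[App.~A]{JLY}) guarantees that $\xymatrix{\Z/p^{n+1}\Z \ar[r] & \Z/p^n\Z \ar[r] & 1}$ is solvable over any $\Z/p^n\Z$-extension in characteristic $p$, so $i(K/F) = -\infty$. Pushing the resulting $\Z/p^{n+1}\Z$-extension $L/F$ through Theorem \ref{th:parameterization}, the corresponding submodule of $J(K)$ is cyclic of $\F_p$-dimension $1$ with $\lambda$-value $1$; that is, there exists $\chi \in J(K)$ with $\ell(\chi)=1$ and $e(\chi) \neq 0$.

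Next I would compute $\dim_{\F_p} J(K)^G$ cohomologically. The additive normal basis theorem makes $K$ a free $F[G]$-module, so $H^j(G, K) = 0$ for every $j \geq 1$. Splitting the four-term Artin-Schreier sequence $\xymatrix{0 \ar[r] & \F_p \ar[r] & K \ar[r]^{\wp} & K \ar[r] & J(K) \ar[r] & 0}$ into two short exact sequences via $\wp(K)$ and running the long exact cohomology sequences --- using $H^1(G, \F_p) \simeq H^2(G, \F_p) \simeq \F_p$ for cyclic $G$ --- one obtains $\dim_{\F_p} J(K)^G = \mathfrak{d}_n + 1$, where $\mathfrak{d}_n = \dim_{\F_p} F/(F \cap \wp(K))$.

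Third, using the identity $(\sigma-1)^{p^n-1} = Tr_{K/F}$ inside $\F_p[G]$ --- a routine application of Lucas' theorem to $\binom{p^n-1}{k}$ --- I identify $J(K)_{\{p^n\}}$ with the image of $F$ in $J(K) = K/\wp(K)$, namely $F/(F \cap \wp(K))$, of dimension $\mathfrak{d}_n$. Any representative $\chi \in F + \wp(K)$ would force $Tr_{K/F}(\chi) \in \wp(F)$, contradicting $e(\chi) \neq 0$; hence $\chi \notin J(K)_{\{p^n\}}$ and $J(K)^G = \langle \chi \rangle \oplus J(K)_{\{p^n\}}$. To collapse the intermediate levels of the filtration I note that any element of $J(K)_{\{2\}}$ has the form $(\sigma-1)\bar{v}$, and the representative $(\sigma-1)v \in K$ has trace zero and hence trivial index; since the index is nonzero precisely on the $\chi$-component of $J(K)^G$, this forces $J(K)_{\{2\}} \subseteq F/(F \cap \wp(K)) = J(K)_{\{p^n\}}$, and the filtration reduces to $J(K)_{\{2\}} = J(K)_{\{3\}} = \cdots = J(K)_{\{p^n\}}$.

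The decomposition then follows from Proposition \ref{prop:build.a.module} applied with $\mathcal{I}_1 = \{\chi\}$, $\mathcal{I}_i = \emptyset$ for $2 \leq i < p^n$, and $\mathcal{I}_{p^n}$ an $\F_p$-basis $\{\bar{f}_j\}$ of $F/(F \cap \wp(K))$. Surjectivity of $Tr_{K/F} : K \to F$ lets me lift each $\bar{f}_j$ along $(\sigma-1)^{p^n-1}$ by choosing $v_j \in K$ with $Tr_{K/F}(v_j) = f_j$; the condition $f_j \notin \wp(K)$ forces $\ell(\bar{v}_j) = p^n$, so $\langle \bar{v}_j \rangle \simeq \F_p[G]$. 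This yields $J(K) = \langle \chi \rangle \oplus Y_n$ with $Y_n = \bigoplus_j \langle \bar{v}_j \rangle \simeq \oplus_{\mathfrak{d}_n} \F_p[G]$. I expect the main technical obstacle to be the synthesis in the third step: using the index map to separate $\chi$ from the free part while simultaneously applying the cohomological count to collapse the intermediate filtration.
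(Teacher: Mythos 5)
Your proof is correct, and it takes a genuinely different route from the paper's. Where you compute $\dim_{\F_p}J(K)^G$ cohomologically --- using the additive normal basis theorem to make $K$ cohomologically trivial and running the long exact sequences from the Artin--Schreier four-term sequence to get $\dim_{\F_p}J(K)^G = \mathfrak{d}_n + 1$ --- the paper stays entirely inside the embedding-problem framework it has already built: it invokes the Jensen--Ledet--Yui criterion for central nonsplit $\Z/p\Z$-kernel embedding problems not just once (to produce $\chi$) but inductively at every level, lifting $k_1 = f - e(f)\chi$ step by step through central extensions $\F_p[G]/(\sigma-1)^{i+1}\rtimes G \to \F_p[G]/(\sigma-1)^i\rtimes G$ to some $k_{p^n}$ of length $p^n$ with $Tr_{K/F}(k_{p^n}) = k_1$. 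Both arguments establish $J(K)^G = \langle\chi\rangle \oplus J(K)_{\{p^n\}}$ with $J(K)_{\{p^n\}} = \im(Tr_{K/F})$ together with the collapse of the intermediate filtration, after which Proposition \ref{prop:build.a.module} finishes. Your version is shorter and more conceptual once the cohomological vanishing is granted, and the use of the index $e$ as an $\F_p$-linear functional on $J(K)^G$ cleanly separates $\langle\chi\rangle$ from the trace image; the paper's version is more self-contained relative to its own machinery and more constructive, producing the lift explicitly. One point worth making explicit in your write-up: the dimension count forces $\ker(e|_{J(K)^G}) = J(K)_{\{p^n\}}$ only because $e$ is $\F_p$-linear on $J(K)^G$, which holds since $\rho$ is additive up to a constant in $\F_p$ that $(\sigma-1)$ annihilates --- true, but it deserves a sentence.
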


\begin{proof}
The embedding problem
$$\xymatrix{\Z/p^{n+1}\Z \ar@{->>}[r] & \Z/p^n\Z}$$
is central and nonsplit, and hence has a solution over $K/F$ by \cite[App.~A]{JLY}. Let $\chi \in (K/\wp(K))^G$ be a class which generates such an extension, and for convenience let $e(\chi) = 1$.  We will show that for any $f \in (K/\wp(K))^{G}$ and any $1 \leq i \leq p^n$ there exists an element $k_i \in K/\wp(K)$ and a value $c \in \F_p$ such that $f = c\chi + \Psi^{i-1}k_i$; furthermore, when $i<p^n$ we may also insist that $e(k_i) = 0$.  Notice that since $\Psi^{p^n-1} \equiv \sum_{i=0}^{p^n-1} \sigma^i = Tr_{K/F}$, this result tells us that $f = c\chi + Tr_{K/F}(k_{p^n})$.  This gives the desired result.

First, let $f \in (K/\wp(K))^{G}$ be given.  If $e(f) = c$, then the element $k_1=f-c\chi$ satisfies the necessary conditions.  If $k_1=0$ then let $k_{p^n} = 0$; otherwise suppose $1\leq i<p^n$, and we have a nonzero element $k_i \in K/\wp(K)$ with $e(k_i)=0$ and $\Psi^{i-1}k_i = k_1 = f-c\chi$.  This means that we have a solution to the embedding problem 
$$\xymatrix{\F_p[G]/\langle\Psi^i\rangle \rtimes G  \ar@{->>}[r]& G}.$$  Notice that we have a short exact sequence
$$\xymatrix{1 \ar[r]& \F_p \simeq \langle \Psi^i \rtimes 1 \rangle \ar[r]& \F_p[G]/\langle\Psi^{i+1}\rangle \rtimes G \ar[r]& \F_p[G]/\langle\Psi^i\rangle \rtimes G \ar[r] & 1}.$$ Since the action of $\sigma$ is trivial on the kernel, this is a central extension.  Furthermore the sequence is nonsplit because $\F_p[G]/\langle\Psi^{i+1}\rangle$ and $\F_p[G]/\langle\Psi^{i}\rangle$ each have rank $2$.  Again applying \cite[App.~A]{JLY}, this embedding problem has a solution, and so there is an element $k_{i+1} \in K/\wp(K)$ with $\Psi k_{i+1} = k_i$ and either $\ell(k_{i+1}) = p^n$ or $e(k_{i+1})=0$.  By induction, the desired result follows.
\end{proof}

For any field $K$, these results show us that the possibilities for the $\F_p[G]$-module structure for $J(K)$ are quite limited, in the sense that most isomorphism classes of indecomposable $\F_p[G]$-modules cannot appear as summands of $J(K)$ for any $K$.  To make this more precise, we generalize some notation from the introduction.  
Recall that $\varepsilon(i)$ is defined by $p^{\varepsilon(\ell)-1}<\ell<p^{\varepsilon(\ell)}$, and define 
$$\mathfrak{D}_\ell = \left\{\begin{array}{ll}
\dim_{\F_p}\left(\left.\frac{\left(N_{\hat K_{\varepsilon(\ell)}/\hat F}\left(\hat K_{\varepsilon(\ell)}^\times\right)\right)\hat K^{\times p}}{\hat K^{\times p}}\right|_{\epsilon = t}\right),&\mbox{ if }\ch{K} \neq p\\[15pt]
\dim_{\F_p}\left(\frac{Tr_{K_{\varepsilon(\ell)}/F}\left(K_{\varepsilon(\ell)}\right)+\wp(K)}{\wp(K)}\right),&\mbox{ if }\ch{K}=p.\\
\end{array}\right.$$
The computed module structures for $J(K)$ allow us to calculate the essential quantities for understanding embedding problems over $K/F$.
\begin{corollary}\label{cor:computing.length.filtration.dimensions.for.JK}
If $\Gal(K/F) \simeq \Z/p^n\Z$, then $\lambda\left(J(K)\right) = p^{i(K/F)}+1$.  Morever, for all $1 \leq \ell \leq p^n$, \begin{equation*}
\dim\left({J(K)}_{\{\ell\}}\right) = \left\{\begin{array}{ll}
\mathfrak{D}_\ell +1,&\mbox{ if }\ell = p^{i(K/F)}+1 \mbox{ and either }p>2,n>1 \mbox{ or }i(K/F)=-\infty\\
\mathfrak{D}_\ell,&\mbox{ otherwise.}\\
\end{array}\right.
\end{equation*}
\end{corollary}

Before going into more technical results concerning general embedding problems, we are already in position to give a very quick proof of Theorem \ref{th:free.summands.realization.multiplicity}.

\begin{proof}[Proof of Theorem \ref{th:free.summands.realization.multiplicity}]
The hypotheses give $M \neq J(K)$.  Apply Corollary \ref{cor:bounding.realization.multiplicity.with.free.summands} with $A = J(K)$.
\end{proof}

We now give a characterization of fields $K/F$ which admit a solution to a given embedding problem $\xymatrix{M \bullet_\mu G \ar@{->>}[r]& G}$.  

\begin{theorem}\label{th:embedding.problem.criterion}
The embedding problem $\xymatrix{M \bullet_\mu G \ar@{->>}[r]& G}$ has a solution over $K/F$ if and only if the following conditions hold:
\begin{enumerate}
\item for all $1 \leq \ell \leq p^n$ we have $$\dim\left(M_{\{\ell\}}\right) \leq \left\{\begin{array}{ll}\mathfrak{D}_{\ell} + 1,&\mbox{ if }\ell=p^{i(K/F)}+1=\mu\mbox{ and either }p>2,n>1\mbox{ or }-\infty = i(K/F)\\
\mathfrak{D}_{\ell},&\mbox{ otherwise.}\end{array}\right.$$
\item $\mu \geq p^{i(K/F)}+1$.  
\end{enumerate}
\end{theorem}

\begin{proof}
Since the smallest length for an element with non-trivial index in $J(K)$ is $p^{i(K/F)}+1$, any submodule $U \subseteq J(K)$ must have $\lambda(U) \geq p^{i(K/F)}+1$.  Hence if the second condition fails, then there is no solution to the corresponding embedding problem.

So suppose that $\mu \geq p^{i(K/F)}+1$.  Lemmas \ref{le:identifying.modules.by.bottom}, \ref{le:fixed.part.for.split}, \ref{le:filtration.conditions.lambda.equals.mu} and \ref{le:filtration.conditions.lambda.less.than.mu} give necessary and sufficient conditions (depending on the value of $\mu$) for a filtration $F_W$ to be the fixed part of a module $U \subseteq J(K)$ with $U \simeq M$ and $\lambda(U) = \mu$.  In each case, we need both $F_W \simeq F^{\mbox{\tiny{len}}}_M$ and $F_W \subseteq F^{\mbox{\tiny{len}}}_{J(K)}$.
In the case that $\mu = \lambda(J(K)) = p^{i(E/F)}+1 \neq p^n$ it must also be the case that $F_W \not\subseteq \widehat{F^{\mbox{\tiny{len}}}_{J(K)}}$ (which is equivalent to $W_{p^{i(K/F)}+1} \not\subseteq J(K)_{\{p^{i(K/F)}+1\}}^0)$.  Corollary \ref{cor:computing.length.filtration.dimensions.for.JK} computes the dimensions from the filtration $F^{\mbox{\tiny{len}}}_{J(K)}$, and this gives the stated bounds on $\dim\left(M_{\{\ell\}}\right)$.
\end{proof}

Of course, this result is simply a more general version of Theorem \ref{th:counting.to.split.embedding.problem}.

\begin{proof}[Proof of Theorem \ref{th:counting.to.split.embedding.problem}]
The first part of Theorem \ref{th:counting.to.split.embedding.problem} follows directly from Theorem \ref{th:embedding.problem.criterion}.  For the second part, suppose first that $(F^\times K^{\times p})/K^{\times p} = \iota(J(F))$ is infinite, and let $U = \oplus \langle \alpha_i \rangle \subseteq J(K)$ be given so that $U \simeq A$ and $\lambda(U) = p^n$.  For any $f \in \iota(J(F)) \setminus U$, we can create a new module $U_f := \oplus \langle f\alpha_i \rangle$ which has $U_f \simeq U \simeq A$ and $\lambda(U_f) = p^n$.  Moreover, if $f_2 \not\in \langle f_1,U \rangle$, then $U_{f_1} \neq U_{f_2}$.  Hence there are infinitely many modules in $J(K)$ which correspond to a solution to $\xymatrix{A \rtimes G \ar@{->>}[r]& G}$.

So suppose that $(F^\times K^{\times p})/K^{\times p}$ is finite.  This implies that $J(K)$ is finite as well.  If $p^n > \lambda(J(K)) = p^{i(K/F)}+1$, then we may apply Theorem \ref{th:count.for.split.group}; the formulation we have in Theorem \ref{th:counting.to.split.embedding.problem} is simply a re-expression of the quantity from Theorem \ref{th:count.for.split.group}, as per its subsequent remark.  If instead $p^n=\lambda(J(K))$ then we must be in the case $p=2,n=1$ and $i(K/F)=0$, in which case we apply Corollary \ref{cor:count.for.split.inside.split.case}.
\end{proof}

Note that Theorem \ref{th:embedding.problem.criterion} gives a condition for solvability in terms of the values $\mathfrak{D}_{i}$ for all $1 \leq \ell \leq p^n$, even though the terms $\mathfrak{D}_{p^k+j}$ are equal for all $1 \leq j \leq p^{k+1}-p^k$.  Hence we can give a slightly more general result.  In this result we will adopt the following notation for $t \in \{-\infty,0,1,\cdots,n-1\}$: $$t\dotplus 1 = \left\{\begin{array}{ll}0,&\mbox{ if }t=-\infty\\t+1,&\mbox{ if }t\geq 0\end{array}\right.$$
We will also interpret $p^{-\infty}$ as $0$.  (These are in keeping with notation already used in \cite{MSS1}.)
\begin{corollary}\label{cor:weaker.conditions}
The embedding problem $\xymatrix{M \bullet_\mu G \ar@{->>}[r]& G}$ has a solution over $K/F$ if and only if the following conditions hold:
\begin{enumerate}
\item for all $t \in \{-\infty,0,1,\cdots,n-1\}$, we have $$\dim(M_{\{p^{t}+1\}}) \leq \left\{\begin{array}{ll}
\mathfrak{D}_{p^{t \dotplus 1}} + 1,&\mbox{ if }t=i(K/F), p^{i(K/F)}+1=\mu, \mbox{ and }\\&\mbox{ either }p>2,n>1 \mbox{ or }i(K/F)=-\infty\\[10pt]
\mathfrak{D}_{p^{t \dotplus 1}}, &\mbox{ otherwise}.\end{array}\right.
$$
\item $\mu \geq p^{i(K/F)}+1$.  
\end{enumerate}
\end{corollary}
\begin{proof}
Theorem \ref{th:embedding.problem.criterion} gives us necessary and sufficient conditions for the embedding problem to be solvable.  Since the conditions of Theorem \ref{th:embedding.problem.criterion} imply the conditions of this theorem, one direction of the proof is immediate.  For the other, we will suppose that $M$ and $\mu$ satisfy the above conditions, and we'll show these give the conditions for Theorem \ref{th:embedding.problem.criterion}.  Of course, we only need to verify the first condition of Theorem \ref{th:embedding.problem.criterion}, and so we will prove that $\dim\left(M_{\{\ell\}}\right)$ satisfies the appropriate bound for each $1 \leq \ell \leq p^n$.

First, suppose $\ell=1$.  If $i(K/F) \neq -\infty$ or $\mu \neq 1$, then our hypothesis (with $t=-\infty$) gives $\dim\left(M_{\{1\}}\right) \leq \mathfrak{D}_1$.  Otherwise $i(K/F)=-\infty$ and $\mu=1$, and so our hypothesis gives $\dim\left(M_{\{1\}}\right) \leq \mathfrak{D}_1+1$.  These are precisely the conditions for $\ell=1$ in Theorem \ref{th:embedding.problem.criterion}.

Now consider $1<\ell = p^i+j$ where $1 \leq j \leq p^{i+1}-p^i$, and for the time being suppose further that $i\neq i(K/F)$ or $\mu \neq p^{i(K/F)}+1$.  By definition we have $\mathfrak{D}_{p^{i+1}} = \mathfrak{D}_{p^i+j}$, and so our hypothesis gives $$\dim\left(M_{\{p^i+j\}}\right) \leq \dim\left(M_{\{p^i+1\}}\right) \leq \mathfrak{D}_{p^{i+1}} = \mathfrak{D}_{p^i+j},$$ as required by Theorem \ref{th:embedding.problem.criterion}.  

Now suppose $1 < \ell = p^i+j$ where $1 \leq j \leq p^{i+1}-p^i$, and now assume $i = i(K/F)$ and $\mu = p^{i(K/F)}+1$.  If $j=1$ then we have $\dim\left(M_{\{p^i+1\}}\right) \leq \mathfrak{D}_{p^{i+1}}+1$ as required.  For $j>1$,  we claim that $\dim\left(M_{\{p^i+j\}}\right)<\dim\left(M_{\{p^i+1\}}\right)$.  Once we verify this claim, we will have
$$\dim\left(M_{\{p^i+j\}}\right)<\dim\left(M_{\{p^i+1\}}\right) \leq \mathfrak{D}_{p^{i+1}}+1,$$ and therefore $\dim\left(M_{\{p^i+j\}}\right)\leq \mathfrak{D}_{p^{i+1}}$ as desired.

To verify our claim, observe that if $M \subseteq J(K)$ satisfies $\lambda(M) = \mu = p^i+1$, then there is an element $m \in M$ with $\ell(m) = p^i+1$ and $e(m) \neq 0$.  If $\Psi^{p^i}m = \Psi^{p^i}b$ for some $b \in M^0$, then the term $m-b$ would have $\ell(m-b)<p^i+1$ and $e(m-b) \neq 0$.  This would contradict $\lambda(M) = p^i+1$, and so no such $b$ exists.  In particular, it must be that $\Psi^{p^i}m \not\in M_{\{p^i+2\}} \subseteq M_{\{p^i+1\}}^0$.  Hence $$\dim(M_{\{p^i+j\}}) \leq \dim\left(M_{\{p^i+2\}}\right) < \dim(M_{\{p^i+1\}}),$$ and the result follows.
\end{proof}

The previous corollary will be the foundation for a very general automatic realization result.  Before stating the theorem, recall the definition of $\lceil M \rceil$ of an $\F_p[G]$-module $M$ from (\ref{eq:roundup.decomposition.for.general.module}).  For the group $M \bullet_\mu G$ with $1 \leq \mu \leq p^n$ let $\{\alpha_i\}$ be the generators for $M$, and when $\lambda(M)<p^n$ assume that $\alpha_1$ is a generator of $A$ with $\ell(\alpha_1) = \mu$ and $e(\alpha_1) \neq 0$.  Recall that for $1 \leq \ell \leq p^n$, the quantity $\varepsilon(\ell)$ satisfies $p^{\varepsilon(\ell)-1} < \ell \leq p^{\varepsilon(\ell)}$.  Notice that when $\ell=1$ we have $\varepsilon(\ell)=0$; in this case, we will define $\varepsilon(\ell)-1$ to be $-\infty$, which fits with our early conventions involving $-\infty$.  We define $\lfloor A \rceil$ to be the $\F_p[G]$-module with generators $\{\beta_i\}$ subject to the conditions 
$$\ell(\beta_i) = \left\{\begin{array}{ll} p^{\varepsilon(\ell(\alpha_i))},&\mbox{ if }i \neq 1\\p^{\varepsilon(\mu)- 1}+1,&\mbox{ if }i = 1.\end{array}\right.$$ (Note: if $\mu = 1$, our definition of $\varepsilon(1)-1=-\infty$ means $p^{\varepsilon(\mu)-1}+1=1$.)  Again, the notation $\lfloor M \rceil$ is chosen suggestively, since most summands of $M$ have their dimension ``rounded up" to the nearest power of $p$, whereas the summand with nontrivial index has its dimension ``rounded down" to the nearest number which is one more than a power of $p$.  It is also worth noting that $\lceil M \rceil$ and $\lfloor M \rceil$ differ (as $\F_p[G]$-modules) by at most one summand, and so typically the module $\lfloor M \rceil$ is much larger than $M$.

\begin{theorem}\label{th:general.auto.realization}
The group $M \rtimes G$ automatically realizes the group $\lceil M \rceil \rtimes G$.  Furthermore, if $1 \leq \mu < p^n$, define $\tilde \mu=p^{\varepsilon(\mu)-1}+1$. Then $M \bullet_\mu G$ automatically realizes $\left\lfloor M \right\rceil \bullet_{\tilde \mu} G$.
\end{theorem}

\begin{proof}
Suppose first that $F \in \mathfrak{F}(M \rtimes G)$; let $K$ be the fixed field of the subgroup $M \rtimes 0$.  In order to show that there is a solution to the embedding problem $\lceil M \rceil \rtimes G$ over $K/F$, we will appeal to Corollary \ref{cor:weaker.conditions}.  Observe first that if $p=2$ and $n=1$, then $M= \lceil M \rceil$, and so the result is trivial.  Otherwise we must be in the case where $p^{i(K/F)}+1<p^n$, and hence we must show that for all $t \in \{-\infty,0,1,\cdots,n-1\}$ we have $\dim(\lceil M\rceil_{\{p^t+1\}})\leq \mathfrak{D}_{p^{t\dotplus 1}}$.  By construction we have $\dim\left(\lceil M \rceil_{\{p^t+1\}}\right) = \dim\left(M_{\{p^t+1\}}\right)$, and since $\xymatrix{M \rtimes G \ar@{->>}[r] &G}$ is solvable over $K/F$, the previous Corollary tells us $\dim\left(M_{\{p^t+1\}}\right) \leq \mathfrak{D}_{p^{t \dotplus 1}}$.  This completes the proof of the first statement.

Now suppose that $F \in \mathfrak{F}(M \bullet_\mu G)$; as before, let $K$ be the fixed field of the subgroup generated by $M$.  By Corollary \ref{cor:weaker.conditions}, 
we need to show that
\begin{enumerate}
\item for all $t \in \{-\infty,0,1,\cdots,n-1\}$, we have $$\dim(\lfloor M \rceil_{\{p^{t}+1\}}) \leq \left\{\begin{array}{ll}
\mathfrak{D}_{p^{t \dotplus 1}} + 1,&\mbox{ if }t=i(K/F), p^{i(K/F)}+1=\tilde \mu, \mbox{ and }\\&\mbox{ either }p>2,n>1 \mbox{ or }i(K/F)=-\infty\\
\mathfrak{D}_{p^{t \dotplus 1}}, &\mbox{ otherwise}.\end{array}\right.
$$
\item $\tilde \mu \geq p^{i(K/F)}+1$.  
\end{enumerate}


For the first condition, we have $\dim(\lfloor M \rceil_{\{p^t+1\}}) = \dim(M_{\{p^{t}+1\}})$, and since $\xymatrix{M \bullet_\mu G\ar@{->>}[r]&G}$ is solvable over $K/F$ we have $$\Delta(M_{\{p^t+1\}}) \leq \left\{\begin{array}{ll}
\mathfrak{D}_{p^{t \dotplus 1}} + 1,&\mbox{ if }t=i(K/F), p^{i(K/F)}+1=\mu, \mbox{ and }\\&\mbox{ either }p>2,n>1 \mbox{ or }i(K/F)=-\infty\\
\mathfrak{D}_{p^{t \dotplus 1}}, &\mbox{ otherwise}.\end{array}\right.$$
If $\mu \neq p^{i(K/F)}+1$ then we have $\dim\left(M_{\{p^t+1\}}\right) \leq \mathfrak{D}_{p^{t \dotplus 1}}$ in all cases, and otherwise we have $\mu = \tilde \mu$, in which case we recover precisely the desired conditions.

Now we check $\tilde \mu \geq p^{i(K/F)}+1$.  Suppose to the contrary that $\tilde \mu = p^{\varepsilon(\mu)-1}+1<p^{i(K/F)}+1$.  This implies $i(K/F) >\varepsilon(\mu)-1$.  On the other hand, the solvability of $\xymatrix{M \bullet_\mu G \ar@{->>}[r]&G}$ and the definition of $\varepsilon(\mu)$ tell us $p^{i(K/F)}+1 \leq \mu \leq p^{\varepsilon(\mu)}$, from which we derive $i(K/F)<\varepsilon(\mu)$.  Hence $\varepsilon(\mu)-1<i(K/F)<\varepsilon(\mu),$ a clear contradiction.
\end{proof}

We wish to emphasize that the results we have presented here are certainly not the only conclusions one can draw, but are chosen simply to give an indication of the wide-sweeping results one can obtain from the parameterization from Theorem \ref{th:main.theorem} together with the computed module structures in Propositions \ref{prop:module.structure.char.not.p} and \ref{prop:module.structure.char.p}.  For a more detailed look at how these results are related to automatic realizations and realization multiplicities for the groups $H_{p^3}$ and $M_{p^3}$ from Example \ref{ex:hp3.and.mp3}, as well as related groups, see the author's forthcoming collaboration \cite{CMS}.

\end{document}